\numberwithin{equation}{section}
\numberwithin{figure}{section}
\theoremstyle{definition}
\newtheorem{example}{\protect\examplename}[section]
\theoremstyle{plain}
\newtheorem{assumption}{\protect\assumptionname}
\theoremstyle{plain}
\newtheorem{thm}{\protect\theoremname}[section]
\theoremstyle{remark}
\newtheorem{rem}{\protect\remarkname}[section]
\theoremstyle{plain}
\newtheorem{lem}{\protect\lemmaname}[section]
\theoremstyle{plain}
\theoremstyle{plain}
\newtheorem{cor}{\protect\corollaryname}[section]
\newcommand{\md}{\mathop{}\mathopen\mathrm{d}}
\newcommand{\me}{\mathop{}\mathopen\mathrm{e}}
\newcommand{\bR}{\mathbb{R}}
\newcommand{\R}{\mathbb{R}}
\newcommand{\bE}{\mathbb{E}}
\newcommand{\E}{\mathbb{E}}
\newcommand{\cL}{\mathcal{L}}
\newcommand{\sP}{\mathscr{P}}
\newcommand{\cW}{\mathcal{W}}
\newcommand{\cK}{\mathcal{K}}
\newcommand{\eps}{\varepsilon}
\newcommand{\err}{\theta_{n}}
\newcommand{\tr}{\intercal}
\newcommand{\id}{\mathrm{Id}}
\providecommand{\assumptionname}{Assumption}
\providecommand{\corollaryname}{Corollary}
\providecommand{\examplename}{Example}
\providecommand{\lemmaname}{Lemma}
\providecommand{\remarkname}{Remark}
\providecommand{\theoremname}{Theorem}
\begin{document}
\title{Sequential propagation of chaos}
\author{Kai Du}
\address{(K. Du) Shanghai Center for Mathematical Sciences, Fudan University,
Shanghai, China; Shanghai Artificial Intelligence Laboratory, 701
Yunjin Road, Shanghai, China}
\email{kdu@fudan.edu.cn}
\author{Yifan Jiang}
\address{(Y. Jiang) Mathematical Institute, University of Oxford, Oxford, UK}
\email{yifan.jiang@maths.ox.ac.uk}
\author{Xiaochen Li}
\address{(X. Li) School of Mathematical Sciences, Fudan University, Shanghai,
China}
\email{xcli18@fudan.edu.cn}
\keywords{propagation of chaos, particle system, sequential interaction, McKean--Vlasov
process, convergence rate}
\begin{abstract}
\noindent
A new class of particle systems with sequential interaction is proposed to approximate the McKean--Vlasov process that originally arises as the limit of the mean-field interacting particle system.
The weighted empirical measure of this particle system is proved to converge to the law of the McKean--Vlasov process as the system grows.
Based on the Wasserstein metric, quantitative propagation of chaos results are obtained for two cases: the finite time estimates under the monotonicity condition and the uniform in time estimates under the dissipation and the non-degenerate conditions.
Numerical experiments are implemented to demonstrate the theoretical results.
\end{abstract}

\maketitle

\section{Introduction}

This paper studies the asymptotic behavior of a particle system with sequential interaction. 
The major feature of the system is that each particle  is influenced only by the particles with smaller ordinal numbers. 
Specifically, the $n$-th particle process
$X^{n}$ (\(n\geq2\)) is determined recursively by
\begin{equation}
\left\{
\begin{aligned}\md X_{t}^{n} & =b(t,X_{t}^{n},\mu_{t}^{n-1})\md t+\sigma(t,X_{t}^{n},\mu_{t}^{n-1})\md W_{t}^{n};\\
\mu_{t}^{n} & =\mu_{t}^{n-1}+\alpha_{n}(\delta_{X_{t}^{n}}-\mu_{t}^{n-1}),
\end{aligned}
\right.
\label{eq:ADPS}
\end{equation}
with \(X^{1}_{t}\equiv X^{1}_{0}\) and \(\mu_{t}^{1}=\delta_{X_{t}^{1}}\).
Here, the update rate \(\{\alpha_{n}\}_{n\geq 1}\) is a decreasing positive sequence with \(\alpha_{1}=1\), $W^{n}$ are independent multidimensional Brownian motions, and the initial data $X_{0}^{n}\sim\mu_{0}$ are i.i.d. $\R^{d}$-valued random variables independent of $\{W^{n}\}_{n\geq 1}$.
Evidently, $\mu_{t}^{n}$ is a weighted empirical measure of $X_{t}^{1},\dots,X_{t}^{n}$; if  we take $\alpha_{n}=1/n$, $\mu_{t}^{n}$ is the classical empirical measure.

As the interaction is asymmetric and heterogeneous, the model \eqref{eq:ADPS} is in sharp contrast to the corresponding
 mean-field interacting system $\{X^{n,N}:n=1,\dots,N\}$
given by 
\begin{equation}
    \label{eq:mean-field}
\left\{\begin{aligned}
    \md X_{t}^{n,N}&=b(t,X_{t}^{n,N},\mu_{t}^{N})\md t+\sigma(t,X_{t}^{n,N},\mu_{t}^{N})\md W_{t}^{n},\\
    \mu_{t}^{N}&=\frac{1}{N}\sum_{n=1}^{N}\delta_{X^{n,N}_{t}}.
\end{aligned}\right.\end{equation}
It is known from the theory of propagation of chaos (PoC for short,
cf. \citet{mckean1967propagation,sznitman1991topics}) that, as $N$
increases, the empirical measure of the system \eqref{eq:mean-field}
may converge to the law of a McKean--Vlasov process described by
\begin{equation}\left\{
\begin{aligned}\md X_{t} & =b(t,X_{t},\mu_{t})\md t+\sigma(t,X_{t},\mu_{t})\md W_{t},\\
\mu_{t} & =\cL(X_{t}):=\mathrm{Law}(X_{t}).
\end{aligned}\right.
\label{eq:McKean--Vlasov}
\end{equation}
This paper then obtains a similar property for the system \eqref{eq:ADPS}:
as $n$ tends to infinity, the weighted empirical measure $\mu_{t}^{n}$
as well as the law of $X_{t}^{n}$ converges to $\mu_{t}$ defined
in \eqref{eq:McKean--Vlasov} in a very general setting.

Our primary motivation of introducing the particle system \eqref{eq:ADPS} is to construct a computational friendly approximation to McKean--Vlasov processes and the associated nonlinear Fokker--Planck--Kolmogorov equations (cf. \citet{frank2005nonlinear}). 
The McKean--Vlasov process was initially proposed by \citet{mckean1966class} to give a probabilistic interpretation for nonlinear Vlasov equations, and has found a wide range of applications (see \citet{carmona2018probabilistic,chaintron2021propagation} and references therein).  
Although the PoC property is widely applied in computational problems for McKean--Vlasov processes (see \citet{bossy1997stochastic,bao2021first,dos2022simulation}, etc.), the system (\ref{eq:mean-field}) itself is fully coupled and $N$-dependent.
To achieve certain numerical accuracy, a sufficiently large $N$ has to be set a priori and the change of \(N\) will lead to a recomputation of the whole particle system.
On the other hand, thanks to its recursive form, the system \eqref{eq:ADPS} can be simply computed particle-by-particle (or batch-by-batch), resulting in a great reduction of the computational burden. 
More significantly, as more and more particles are added (without affecting existing particles), the approximation accuracy is continuously improved until reaching a desired level; in other words, the required number of particles no longer needs to be specified in advance. 

Particle systems with heterogeneous interaction have attracted a growing interest in recent years. 
The classical mean-field framework in which the particles are interacted with each other has been extended to allow interactions described by general networks. 
In most models, the interactions are encoded in a graph sequence, i.e., two particles are interacting if and only if they are connected in the underlying graph (see \citet{delarue2017mean,bhamidi2019weakly,bayraktar2020graphon,jabin2021mean} and references therein). 
We remark that, in those models, the degree of each particle (as a node in the graph) tends to infinity as the system size $N$ increases. 
This is essentially different from our model in which the degree of each particle is fixed and independent of the system size. 
Some works consider CDF-based or position-based interacting particle systems (see \citet{jourdain2008propagation,andreis2019ergodicity} etc.), where the interactions are time-varying. 
To our best knowledge, it seems to be no discussion on particle systems like \eqref{eq:ADPS} in the literature. 
Apart from the computational consideration, the model \eqref{eq:ADPS} and its extensions are expected to have applications in the study of complex networks (cf. \citet{arenas2008synchronization}), which are left to the future work.

The main results of this paper give quantitative PoC estimates for the system \eqref{eq:ADPS} in two cases: finite time estimates under the monotonicity condition (see Theorem \ref{thm:APoC-W}), and the uniform in time estimates under the dissipation and the non-degenerate conditions (see Theorem \ref{thm:time uniform-1}). 
The difference between measures is quantified by the Wasserstein distance. 
The proofs make use of coupling arguments: we adopt McKean's argument of synchronous coupling (cf. \citet{mckean1967propagation,sznitman1991topics}) in the finite time case, and for uniform in time estimates, we apply the technique of reflection coupling that was developed in \citet{Eberle2016Quantitative,Luo_W_p,uniform,Liu_2021}.
The new challenge is caused by the special interaction mechanism of our model. 
To see this, let $x_{n}$ denote the quantity to be estimated for the $n$-th particle. 
In most existing models (usually an $N$-particle system), one may obtain a relation of the form $x_{n}\le f(x_{1},\dots,x_{N})$, then with the help of certain structural features like exchangeability or similarity one can derive some upper bound for $x_{n}$ or some combination of $x_{1},\dots,x_{N}$. 
However, the corresponding relation in our model has a recursive form, say, $x_{n}\le f_{n}(x_{1},\dots,x_{n})$, which cannot be handled analogously. 
For this, new estimates for recursive inequalities are proved (see Subsection~\ref{subsec:Estimates-for-recursive}).
Moreover, we derive a new estimate for the convergence rate of weighted empirical measures of an i.i.d. sequence in the Wasserstein distance (see Lemma \ref{lemma:iid}). 
The results for classical empirical measures are rich in the literature, see, for example, \citet{horowitz1994mean,fournier2015rate,ambrosio2019pde} and references therein.

Although our discussion focuses on \eqref{eq:ADPS}, the results are still valid for some similar models by analogous arguments. 
For instance, we can consider the model where the $n$-th particle process is governed by 
\[\left\{
\begin{aligned}\md X_{t}^{n} & =b(t,X_{t}^{n},\mu_{t}^{n})\md t+\sigma(t,X_{t}^{n},\mu_{t}^{n})\md W_{t}^{n},\\
\mu_{t}^{n} & =\mu_{t}^{n-1}+\alpha_{n}(\delta_{X_{t}^{n}}-\mu_{t}^{n-1});
\end{aligned}\right.
\]
comparing with \eqref{eq:ADPS}, the measure $\mu_{t}^{n}$ takes the place of $\mu_{t}^{n-1}$ in the equation for $X^{n}$. 
Also, one can add the particles batch-by-batch rather than particle-by-particle:
the $n$-th batch $\{X^{i,n}:i=1,\dots,n\}$ can be constructed as
\[\left\{
\begin{aligned}\md X_{t}^{i,n} & =b(t,X_{t}^{i,n},\mu_{t}^{n})\md t+\sigma(t,X_{t}^{i,n},\mu_{t}^{n})\md W_{t}^{i,n},\\
\mu_{t}^{n} & =\mu_{t}^{n-1}+\alpha_{n}\bigg(\frac{1}{n}\sum_{i=1}^{n}\delta_{X_{t}^{i,n}}-\mu_{t}^{n-1}\bigg).
\end{aligned}\right.
\]
The batch model could be more efficient in numerical computation.

Remarkably, our model exhibits a comparable convergence rate with the classical mean-filed system, but with less computation than the latter. 
This is not just indicated by our theoretical results but also demonstrated by numerical experiments. 

\begin{example}[Curie--Weiss mean-field lattice model]
Consider the equation 
\[
\md X_{t}=[-\beta(X_{t}^{3}-X_{t})+\beta K\bE X_{t}]\md t+\sigma\md W_{t},\quad X_{0}=1.
\]
We take $\beta=1$, $K=1/2$, $T=3$ and $\sigma=1$, and numerically compare the performance of the classical PoC method and our sequential PoC method. 
Figure \ref{fig:Curie-Weiss-mean-field lattice model} (left) shows the comparison of convergence rates between these two methods. 
In addition, we draw a reference line with slope $-1/2$ (in the log-log diagram) to indicate the convergence rate. 
Figure \ref{fig:Curie-Weiss-mean-field lattice model} (right) shows the approximate density functions simulated by the sequential PoC with different numbers of particles $N$.
As $N$ increases, the approximate density function is gradually updated, and no longer changes drastically when $N$ runs over $10^5$, resulting in a fine approximation to the true density.
\begin{figure}[htbp]
\includegraphics[width=0.5\textwidth]{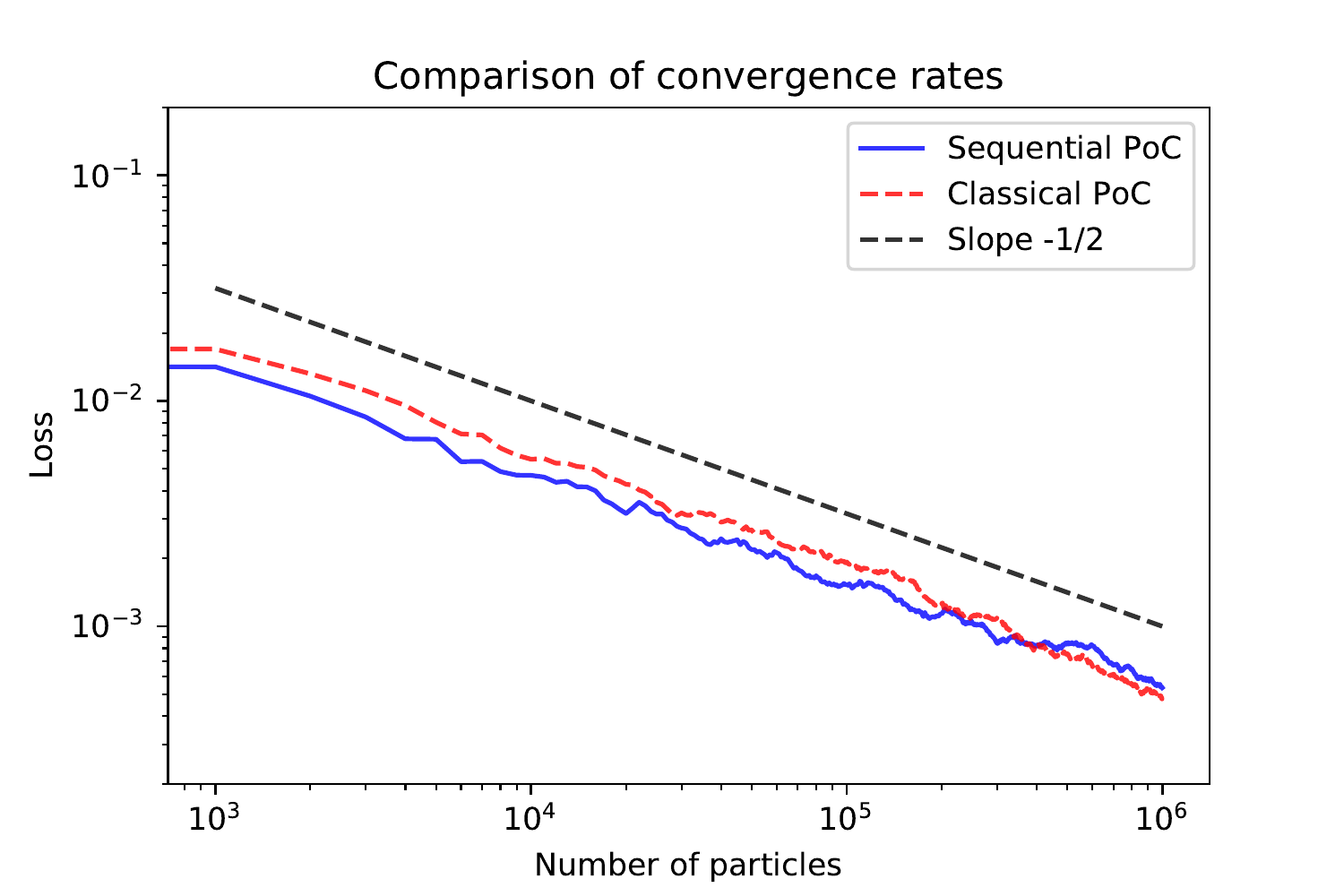}\includegraphics[width=0.5\textwidth]{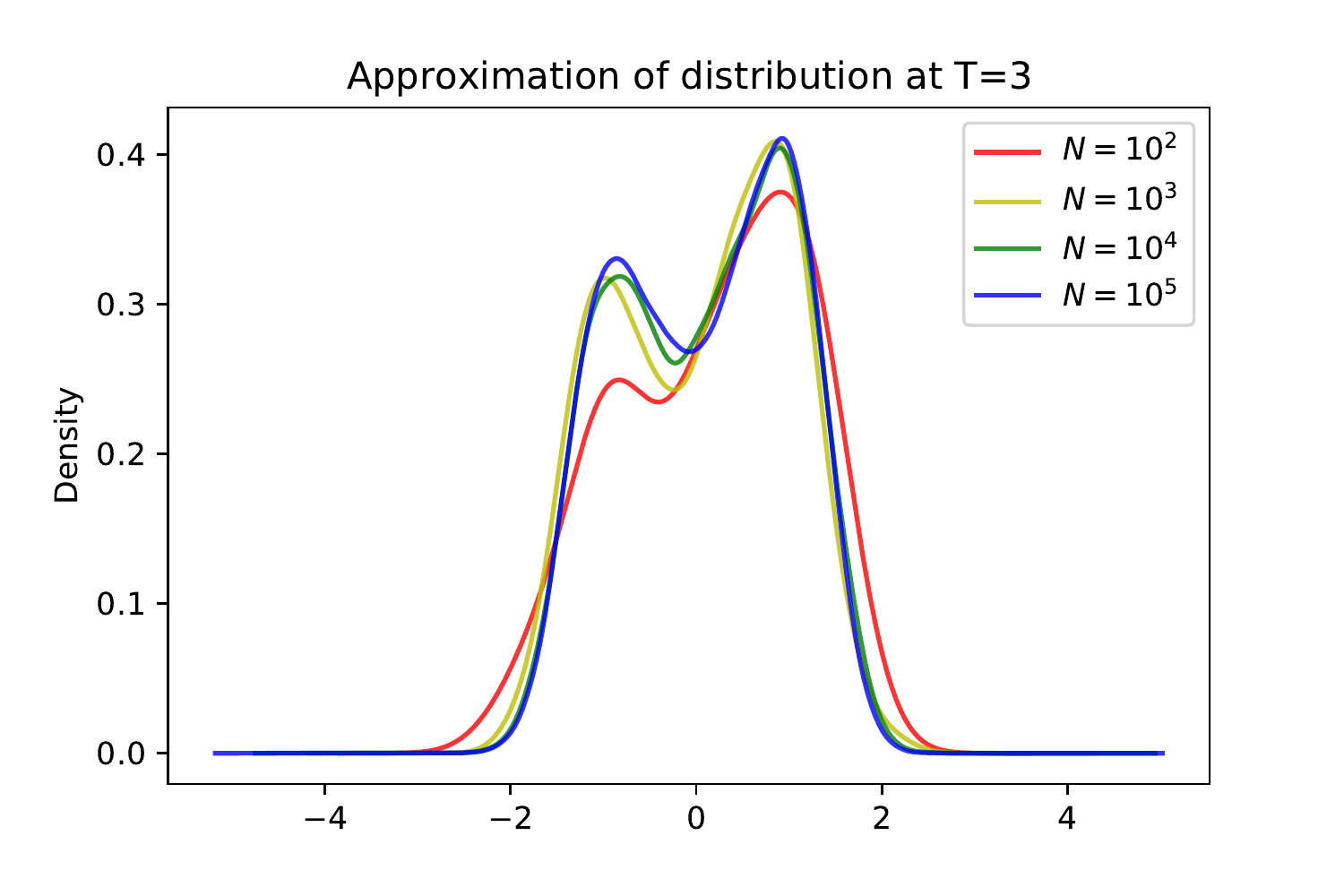}

\caption{\label{fig:Curie-Weiss-mean-field lattice model}Curie--Weiss mean-field
lattice model.}
\end{figure}
\end{example}
This paper is organized as follows. 
Section \ref{sec:2} presents the main results and some remarks. 
Auxiliary lemmas are proved in Section \ref{sec:3}. 
Sections \ref{sec:4} and \ref{sec:5} are devoted to the proofs of the main results, Theorems \ref{thm:APoC-W} and \ref{thm:time uniform-1}, respectively. 
More numerical examples are given in Section \ref{sec:6}.
In Appendix \ref{appendix}, we discuss the convergence in the Wasserstein distance for empirical measures on the path space.

\smallskip

We end the introduction with some notations. 
Denote by $|\cdot|$ and $\langle\cdot,\cdot\rangle$ the Euclidean norm and the inner product, respectively, in the Euclidean spaces, and by $\|\cdot\|$ the Frobenius norm of a matrix. 
Let $\mathscr{P}(E)$ be the space of all Borel probability measures on a normed space $(E,\|\cdot\|_{E})$. 
The $p$-Wasserstein  distance between $\mu,\nu\in\mathscr{P}(E)$ is defined as 
\[
\mathcal{W}_{p}(\mu,\nu):=\inf\big\{(\bE[\|\xi-\eta\|_{E}^{p}])^{1/p}:\cL(\xi)=\mu,\ \cL(\eta)=\nu\big\}.
\]
Moreover, for an increasing concave function $f:[0,\infty)\to[0,\infty)$ with $f(0)=0$, we define
\[
\mathcal{W}_{f}(\mu,\nu):=\inf\big\{\bE[f(\|\xi-\eta\|_{E})]:\cL(\xi)=\mu,\ \cL(\eta)=\nu\big\}.
\]
Denote by $\mathscr{P}_{p}$ the metric space of all probability measures
$\mu\in\mathscr{P}:=\mathscr{P}(\bR^{d})$ with $\|\mu\|_{p}:=[\int|x|^{p}\mu(\md x)]^{1/p}<\infty$,
equipped with the $p$-Wasserstein distance. 

\section{Main results\label{sec:2}}

Throughout this paper, we fix a \emph{decreasing} positive sequence \(\{\alpha_{n}\}_{n\geq 1}\) with \(\alpha_{1}=1\).
\(\alpha_{n}\) is the update rate of our weighted empirical measures.
We denote 
\[
\alpha_{\infty}:=\lim_{n\to\infty}\alpha_{n},\quad\underline{\alpha}:=\liminf_{n\to\infty}\frac{\alpha_{n}-\alpha_{n+1}}{\alpha_{n}^{2}},\quad\overline{\alpha}:=\limsup_{n\to\infty}\frac{\alpha_{n}-\alpha_{n+1}}{\alpha_{n}^{2}}.
\]
In the following, we will frequently encounter the weighted sum \(s_{n}\) recursively given by 
\begin{equation*}
    s_{n}=s_{n-1}+\alpha_{n}(x_{n}-s_{n-1}).
\end{equation*}
This gives 
\begin{equation*}
    s_{n}=\frac{\sum_{i=1}^{n}w_{i}x_{i}}{\sum_{i=1}^{n}w_{i}},
\end{equation*}
where \(w_{1}=1\) and \(w_{n}=\alpha_{n}\prod_{i=2}^{n}(1-\alpha_{i})^{-1}\) for \(n\geq 2\). 
For simplicity, we denote such a weighted sum \(s_{n}\) by \(\cK_{n}(x^{i})\).

We discuss the sequential PoC (SPoC) in both finite horizon and infinite horizon.
\subsection{Finite time SPoC}

We fix constants $T>0$ and \(p\geq 1\). 
\begin{assumption}[monotonicity]
\label{assu:lipschitz}There exists a constant $L\ge0$ such that 
\[
\begin{gathered}2\langle x-y,b(t,x,\mu)-b(t,y,\nu)\rangle+(2p-1)\|\sigma(t,x,\mu)-\sigma(t,y,\nu)\|^{2} \le L\big[|x-y|^{2}+\mathcal{W}_{2}(\mu,\nu)^{2}\big],\\
|b(t,0,\mu)|^{2}+\|\sigma(t,0,\mu)\|^{2}\le L\big[1+\|\mu\|_{2}^{2}\big]
\end{gathered}
\]
for all $t\in[0,T]$, $x,y\in\bR^{d}$ and $\mu,\nu\in\mathscr{P}_{2}$.
\end{assumption}
This monotonicity condition ensures the existence and uniqueness of
strong solutions of \eqref{eq:ADPS} and \eqref{eq:coupling-1} (cf.
\citet{wang2018distribution}). Extensive results on strong well-posedness
of McKean--Vlasov SDEs can be found in \citet{sznitman1991topics,buckdahn2017mean,bauer2018strong,mishura2020existence,hammersley2021mckean,du2021empirical}
and references therein. 

Our first main result is the sequential propagation of chaos in a finite horizon, i.e.,
the weighted empirical measure as well as the law of \(X_{t}^{n}\) converges to the law of the corresponding McKean--Vlasov process.
\begin{thm}

\label{thm:APoC-W} Let Assumption \ref{assu:lipschitz} be satisfied, $p\geq 1$, $\mu_{0}\in\mathscr{P}_{r}$ with $r>2p+(p-1)d$, and
\[
\gamma:=\frac{p}{2+(1-1/p)d}.
\]
Let $X^{n}$  be the solutions of \eqref{eq:ADPS} and \(\mu_{t}=\cL(X_t)\) given by~\eqref{eq:McKean--Vlasov} with the initial distribution~\(\mu_{0}\).
Then 
\begin{enumerate}
    \item if \(\overline{\alpha}<\gamma^{-1}\wedge(2-\alpha_{\infty})\), we have \[\sup_{0\leq t\leq T}\E[\cW_{2}(\mu_{t}^{n},\mu_{t})^{2p}+\cW_{2}(\cL(X^{n}_{t}),\mu_{t})^{2p}]\leq C\me^{CT}\alpha_{n}^{\gamma};\]
    \item if \(\gamma^{-1}\leq \overline{\alpha}<2-\alpha_{\infty}\), we have, for any \(\delta<1\wedge\underline{\alpha}\gamma\), \[\sup_{0\leq t\leq T}\E[\cW_{2}(\mu_{t}^{n},\mu_{t})^{2p}+\cW_{2}(\cL(X^{n}_{t}),\mu_{t})^{2p}]\leq C\me^{CT}\prod_{i=1}^{n}(1-\delta\alpha_{i});\]
    \item if \(\overline{\alpha}\geq 2\), we have, for any \(\delta<1\wedge\underline{\alpha}\gamma \wedge 2\gamma\),
    \[\sup_{0\leq t\leq T}\E[\cW_{2}(\mu_{t}^{n},\mu_{t})^{2p}+\cW_{2}(\cL(X^{n}_{t}),\mu_{t})^{2p}]\leq C\me^{CT}\prod_{i=1}^{n}(1-\delta\alpha_{i}).\]
\end{enumerate}
Here, the constant $C>0$ depends only on $d$, $p$, $L$, $\|\mu_{0}\|_{r}$. 
\end{thm}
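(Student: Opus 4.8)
The plan is to prove Theorem \ref{thm:APoC-W} by a synchronous (McKean-type) coupling: for each $n$, introduce an auxiliary process $Y^{n}$ solving the McKean--Vlasov SDE \eqref{eq:McKean--Vlasov} driven by the \emph{same} Brownian motion $W^{n}$ and with the \emph{same} initial datum $X^{n}_{0}$, so that $\cL(Y^{n}_{t})=\mu_{t}$ for every $n$. Writing $Z^{n}_{t}:=X^{n}_{t}-Y^{n}_{t}$ and applying It\^o's formula to $|Z^{n}_{t}|^{2p}$, the monotonicity condition (Assumption \ref{assu:lipschitz}) produces a drift term of the form $C|Z^{n}_{t}|^{2p}+C|Z^{n}_{t}|^{2p-2}\cW_{2}(\mu^{n-1}_{t},\mu_{t})^{2}$, after handling the martingale part by Burkholder--Davis--Gundy and using the auxiliary moment bounds for $X^{n}$ and $Y$ that follow from the second line of the assumption. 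Taking expectations and using Young's inequality gives, for $u_{n}(t):=\E|Z^{n}_{t}|^{2p}$,
\begin{equation*}
u_{n}(t)\le C\int_{0}^{t}u_{n}(s)\,\md s+C\int_{0}^{t}\E\big[\cW_{2}(\mu^{n-1}_{s},\mu_{s})^{2p}\big]\,\md s,
\end{equation*}
and Gr\"onwall turns this into a bound on $\sup_{t\le T}u_{n}(t)$ in terms of $\sup_{t\le T}\E[\cW_{2}(\mu^{n-1}_{t},\mu_{t})^{2p}]$.

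The next step is to close the recursion at the level of the empirical measures. Since $\cW_{2}(\cL(X^{n}_{t}),\mu_{t})^{2p}\le \E|Z^{n}_{t}|^{2p}=u_{n}(t)$, it remains to control $e_{n}(t):=\E[\cW_{2}(\mu^{n}_{t},\mu_{t})^{2p}]$. Here one inserts the empirical measure $\nu^{n}_{t}:=\cK_{n}(\delta_{Y^{i}_{t}})$ of the i.i.d.\ copies $Y^{1}_{t},\dots,Y^{n}_{t}$ as an intermediate object: by the triangle inequality for $\cW_{2}$ and convexity,
\begin{equation*}
\cW_{2}(\mu^{n}_{t},\mu_{t})^{2p}\le 2^{2p-1}\cW_{2}(\mu^{n}_{t},\nu^{n}_{t})^{2p}+2^{2p-1}\cW_{2}(\nu^{n}_{t},\mu_{t})^{2p}.
\end{equation*}
The second term is bounded in expectation by the i.i.d.\ convergence-rate estimate of Lemma \ref{lemma:iid}, which under $\mu_{0}\in\mathscr{P}_{r}$ with $r>2p+(p-1)d$ yields a rate of order $\alpha_{n}^{\gamma}$ (this is precisely where $\gamma$ and the moment hypothesis enter). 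For the first term, the synchronous coupling of $X^{i}$ with $Y^{i}$ furnishes an admissible transport plan, so $\cW_{2}(\mu^{n}_{t},\nu^{n}_{t})^{2p}\le \cK_{n}(|Z^{i}_{t}|^{2})^{p}\le \cK_{n}(|Z^{i}_{t}|^{2p})$ by Jensen (the weighted average is a probability average), hence $\E[\cW_{2}(\mu^{n}_{t},\nu^{n}_{t})^{2p}]\le \cK_{n}(u_{i}(t))$. Combining, we obtain a recursive inequality schematically of the form
\begin{equation*}
e_{n}(t)\le C\me^{CT}\Big(\alpha_{n}^{\gamma}+\cK_{n}\big(\textstyle\sup_{s\le T}e_{i-1}(s)\big)\Big),
\end{equation*}
after feeding the Gr\"onwall bound $u_{n}\lesssim \me^{CT}\sup_{s\le T}e_{n-1}(s)$ back in and absorbing the weighted averages.

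The final and most delicate step is to solve this recursive inequality to extract the three stated rates. This is where the "new estimates for recursive inequalities" announced in Subsection~\ref{subsec:Estimates-for-recursive} must be invoked: one is faced with $a_{n}\le C(\alpha_{n}^{\gamma}+\cK_{n}(a_{i-1}))$, i.e.\ $a_{n}\le C\alpha_{n}^{\gamma}+C\,w_{n}^{-1}\big(\sum_{i=1}^{n}w_{i}\big)^{-1}\!\cdot\!(\text{stuff})$ — more precisely, using $\cK_{n}(a_{i-1})=(1-\alpha_{n})\cK_{n-1}(a_{i-1})+\alpha_{n}a_{n-1}$ one derives a genuine one-step recursion $b_{n}\le (1-\alpha_{n})b_{n-1}+\alpha_{n}a_{n-1}+C\alpha_{n}^{\gamma}$ for the running weighted average $b_{n}:=\cK_{n}(a_{i-1})$, and then compares the decay of $\alpha_{n}^{\gamma}$ against the contraction factor $\prod(1-\delta\alpha_{i})$. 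The trichotomy in the conclusion reflects which of the two competes: when $\overline{\alpha}<\gamma^{-1}$ the polynomial term $\alpha_{n}^{\gamma}$ dominates and one gets the rate $\alpha_{n}^{\gamma}$; when $\gamma^{-1}\le\overline{\alpha}<2-\alpha_{\infty}$ (resp. $\overline{\alpha}\ge 2$) the geometric product $\prod(1-\delta\alpha_{i})$ dominates for any admissible $\delta<1\wedge\underline{\alpha}\gamma$ (resp. with the extra constraint $\delta<2\gamma$ coming from the borderline behaviour of $1-\alpha_{n}$ when $\alpha_{n}\not\to 0$ fast enough). I expect this last combinatorial/analytic step — carefully iterating the recursive inequality and matching the decay exponents to produce exactly the three regimes — to be the main obstacle; the coupling and It\^o estimates in the first two steps are standard once Lemma \ref{lemma:iid} is in hand.
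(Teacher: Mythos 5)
Your high-level strategy (synchronous coupling $Y^{n}$, the triangle-inequality split through the i.i.d.\ empirical measure $\nu^{n}_{t}=\cK_{n}(\delta_{Y^{i}_{t}})$, Lemma~\ref{lemma:iid}, and a one-step recursion in the running weighted average) matches the paper's. However, there is a genuine gap in how you close the recursion. Your Gr\"onwall step produces $\sup_{t\le T}u_{n}(t)\le CT\me^{CT}\sup_{t\le T}e_{n-1}(t)$, a bound whose multiplicative constant $M:=CT\me^{CT}$ is strictly greater than~$1$. Inserting this into your own one-step recursion $b_{n}=(1-\alpha_{n})b_{n-1}+\alpha_{n}a_{n-1}$ with $a_{n-1}\le M(\alpha_{n-1}^{\gamma}+b_{n-1})$ yields $b_{n}\le(1+(M-1)\alpha_{n})b_{n-1}+M\alpha_{n}\alpha_{n-1}^{\gamma}$, and since $M>1$ the coefficient $1+(M-1)\alpha_{n}$ is greater than~$1$; whenever $\sum\alpha_{n}=\infty$ (the only interesting regime) the product $\prod(1+(M-1)\alpha_{i})$ diverges and $b_{n}$ blows up. So the recursive inequality you arrive at cannot be solved to give any of the stated decay rates; the "delicate combinatorial/analytic step" you flag as the main obstacle is not merely delicate but actually impossible as set up.

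The paper avoids this by introducing the exponentially discounted difference $\Delta_{t}^{n,\lambda}=\me^{-\lambda t}(X_{t}^{n}-Y_{t}^{n})$ and working with the time-integrated quantity $\int_{0}^{T}\E|\Delta_{t}^{n,\lambda}|^{2p}\,\md t$ rather than the supremum. The discounting turns the coefficient of $|\Delta_{t}^{n,\lambda}|^{2p}$ in the It\^o drift from $+C$ into $C-2p\lambda$, which is negative for $\lambda$ large; choosing $\lambda=\tfrac{(2-\eps)C}{(1-\eps)2p}$ makes the resulting convolution bound contribute a factor exactly $(1-\eps)$, strictly below~$1$. Combined with a $(1+\eps)$ loss (rather than $2^{2p-1}$) in the refined triangle-inequality split, this gives the contractive one-step coefficient $1-\eps^{2}\alpha_{n}<1$ for $s_{n}:=\int_{0}^{T}\cK_{n}(\E|\Delta_{t}^{i,\lambda}|^{2p})\,\md t$, which is then solved by Lemma~\ref{lem-gw}. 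To repair your argument you would need to reproduce this contraction, either by the same discounting device or some other mechanism that eliminates the $\me^{CT}$ from the factor multiplying the recursive term (leaving $\me^{CT}$ only in front of the inhomogeneous $\theta_{n}^{\gamma}$ term, as in the paper's final bounds). Your description of the three-regime trichotomy (comparing $\alpha_{n}^{\gamma}$ against $\prod(1-\delta\alpha_{i})$) is correct in spirit, but it is only reachable once the contraction is in place.
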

\begin{rem}
    The case \(2-\alpha_{\infty}\leq \overline{\alpha}<2\) never appears because \(\overline{\alpha}=0\) as long as \(\alpha_{\infty}>0\).
\end{rem}
\begin{rem}
The estimate gives an elaborate characterization on how the stepsize
affects the asymptotic behavior of the system. The empirical measure
$\mu_{t}^{n}$ converges to $\mu_{t}$ as long as $\alpha_{\infty}=0$
and $\sum\alpha_{n}=\infty$; even if not so, we still give an upper
bound for the quantities concerned. The convergence may fail 
if $\alpha_{\infty}>0$ or $\sum\alpha_{n}<\infty$: intuitively,
$\alpha_{\infty}>0$ (or $\sum\alpha_{n}<\infty$) means that the
particles with large (or small) ordinals occupy too much weight in
$\mu_{t}^{n}$. The following example may demonstrate this point:
define a sequence of measures $\mu_{n}$ recursively as $\mu_{n}=\mu_{n-1}+\alpha_{n}(\delta_{w_{n}}-\mu_{n-1})$
and $\mu_{0}=\delta_{0}$, where $w_{n}\sim\mathcal{N}(0,1)$ are
i.i.d. The expected limit of $\mu_{n}$ is the standard Gaussian measure
(e.g., taking $\alpha=1/n$); however, for $\xi_{n}=\int x\,\mu_{n}(\md x)$,
by a simple computation one can see that $\E\xi_{n}=0$ but $\liminf_{n\to0}\E|\xi_{n}|^{2}>0$
whenever $\alpha_{\infty}>0$ or $\sum\alpha_{n}<\infty$; in other
words, $\xi_{n}$ never converges (in $L^{2}$) to zero, thus $\mu_{n}$
does not converge to the standard Gaussian measure.
\end{rem}
\begin{rem}
A typical choice of stepsize is $\alpha_{n}\sim n^{-r}$ with some
$r\in(0,1]$. In the case $r<1$, one has that
\[
\E[\mathcal{W}_{2}(\mu_{t}^{n},\mu_{t})^{2p}]= \mathcal{O}(n^{-2rp/(d+4)}).
\]
If $2p>(d+4)/r$, it follows from the Borel--Cantelli lemma that
$\mathcal{W}_{2}(\mu_{t}^{n},\mu_{t})$ converges to zero almost surely.
In contrast, in the case $\alpha_{n}=1/n$ (associated with the classical
empirical measures), we only have
\[
\bE[\mathcal{W}_{2}(\mu_{t}^{n},\mu_{t})^{2p}]=\mathcal{O}(n^{-\eps})\quad\text{for some\ }\eps<1
\]
even $p$ is large.
So, we cannot conclude the almost sure convergence from estimating a higher moment.
\end{rem}
\begin{rem}\label{rem:functional}
Notice that $\mu^{n}=(\mu_{t}^{n})_{t\le T}$ and $\mu=(\mu_{t})_{t\le T}$
are probability measures on the path space $C([0,T];\R^{d})$. 
By the sequential PoC and convergence of empirical measures on the path space, one can actually discuss the convergence of measures on the path space.
For instance, by taking $\alpha_n = 1/n$, we can show that
\[
\E[\cW_{2}(\mu^{n},\mu)^{2}]\le \mathcal{O}\Big(\frac{\ln\ln n}{\ln n}\Big),
\]
for the  Wasserstein distance  \(\cW_{2}\) on $\mathscr{P}(C([0,T];\R^{d}))$, see Corollary \ref{cor-path} for details.
\end{rem}
\begin{rem}
From the proof of Theorem \ref{thm:APoC-W}, one can see that if additionally
assuming
\[
\langle x-y,b(t,x,\mu)-b(t,y,\mu)\rangle\le-L_{0}|x-y|^{2},\quad\forall\,t\ge0,x,y\in\R^{d},\mu\in\mathscr{P}_{2}
\]
with a sufficiently large constant $L_{0}>0$, then the estimate
can be uniform in time. Of course, the additional condition would
be too restrictive in applications, so we discuss the uniform in time
PoC under some other conditions in the next subsection.
\end{rem}

\subsection{Uniform in time SPoC}

In this case, the diffusion term is required to be non-degenerate
and free of the law. To this end, we modify \eqref{eq:ADPS} to the
following form:
\begin{equation}
    \left\{
\begin{aligned}\md X_{t}^{n} & =b(t,X_{t}^{n},\mu_{t}^{n-1})\md t+\sigma(t,X_{t}^{n})\md W_{t}^{n}+\md B_{t}^{n},\\
\mu_{t}^{n} & =\mu_{t}^{n-1}+\alpha_{n}(\delta_{X_{t}^{n}}-\mu_{t}^{n-1}), \quad n\geq 2,
\end{aligned}\right.
\label{eq:ADPS-1}
\end{equation}
with \(X^{1}_{t}\equiv X^{1}_{0}\) and \(\mu_{t}^{1}=\delta_{X_{t}^{1}}\).
Here,  $\{W^{n}\}$ and $\{B^{n}\}$ are two independent sequences of multidimensional Brownian motions, the initial data $X_{0}^{n}\sim\mu_{0}$ are i.i.d. random variables independent of \(\{W^{n},B^{n}\}_{n\geq 1}\).
The corresponding McKean--Vlasov process is given by
\begin{equation}
    \left\{
\begin{aligned}\md X_{t} & =b(t,X_{t},\mu_{t})\md t+\sigma(t,X_{t})\md W_{t}+\md B_{t},\\
\mu_{t} & =\cL(X_{t}),
\end{aligned}
    \right.
\label{eq:McKean--Vlasov-1}
\end{equation}
where $W$ and $B$ are independent Brownian motions. 
\begin{assumption}[dissipativity]
\label{assu:uniform_H1}There is an increasing function $\kappa:(0,\infty)\to\mathbb{R}$
satisfying $\lim_{r\to\infty}\kappa(r)=:\kappa_{\infty}>0$ and $\lim_{r\to0+}r\cdot\kappa(r)=0$
such that
\[
\left\langle x-y,b(t,x,\mu)-b(t,y,\mu)\right\rangle +\frac{1}{2}\|\sigma(t,x)-\sigma(t,y)\|^{2}\le-\kappa(\|x-y\|)\|x-y\|^{2}
\]
 for all $t\in[0,\infty)$, $x,y\in\mathbb{R}^{d}$ and $\mu\in\mathscr{P}$. 
\end{assumption}
Define an increasing function $f\in C^{2}[0,\infty)$ with $f(0)=0$
and the derivative
\[
f^{\prime}(r)=\frac{1}{2}\int_{r}^{+\infty}s\cdot\exp\bigg(-\frac{1}{2}\int_{r}^{s}\tau\kappa(\tau)\md\tau\bigg)\md s.
\]
It is shown in Lemma \ref{lem:f_concave} that $f$ is concave. This
function has been used in \citet{Liu_2021} to study the mean-field
particle system.
\begin{assumption}[weak interaction]
\label{assu:uniform_H2}
There exists a constant \(0<\eta<1/f'(0)\) such that, for any $t\geq 0$, $x\in\bR^{d}$
and $\mu,\nu\in\mathscr{P}_{1}$,  it holds
\[
|b(t,x,\mu)-b(t,x,\nu)|\leq\eta\cW_{1}(\mu,\nu).
\]
\end{assumption}
The second main result is the following uniform in time PoC: the law
of $X_{t}^{n}$ as well as the (weighted) empirical measure $\mu_{t}^{n}$
converges uniformly in time to $\mu_{t}=\cL(X_{t})$.
\begin{thm}
\label{thm:time uniform-1}Suppose Assumptions \ref{assu:uniform_H1}
and \ref{assu:uniform_H2} hold.
Let $(X_{t}^{n},\mu_{t}^{n})$ and $(X_{t},\mu_{t})$ satisfy \eqref{eq:ADPS-1} and \eqref{eq:McKean--Vlasov-1}, respectively, with a common initial law $\mu_{0}$. 
In addition, we assume that
\begin{equation}
M_{p}:=\sup_{t\ge0}\E|X_{t}|^{p}<\infty\quad\text{with some}\quad p>d+2.\label{eq:Lp}
\end{equation}
Then,
\begin{enumerate}
    \item if \(\overline{\alpha}<2-\alpha_{\infty} \), we have \[\sup_{t\geq 0}\E[\cW_{f}(\mu_{t}^{n},\mu_{t})+\cW_{f}(\cL(X^{n}_{t}),\mu_{t})]\leq C\alpha_{n}^{\frac{1}{d+2}};\]
    \item if \(\overline{\alpha}\geq 2\), we have, for any \(\delta<1\wedge\frac{\underline{\alpha}}{d+2} \wedge \frac{2}{d+2}\),
    \[\sup_{t\geq 0}\E[\cW_{f}(\mu_{t}^{n},\mu_{t})+\cW_{f}(\cL(X^{n}_{t}),\mu_{t})]\leq C\prod_{i=1}^{n}(1-\delta\alpha_{i}).\]
\end{enumerate}
\end{thm}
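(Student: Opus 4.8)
\noindent\emph{Proof strategy.}
For each $n\ge2$ the plan is to couple the particle $X^{n}$ of \eqref{eq:ADPS-1} with an auxiliary process $\bar X^{n}$ that solves \eqref{eq:McKean--Vlasov-1} exactly, started from $\bar X^{n}_{0}=X^{n}_{0}$ and driven by a fresh independent pair of Brownian motions, so that $\{\bar X^{n}\}_{n\ge1}$ is i.i.d.\ with marginal laws $\mu_{t}$. On the non-degenerate noise $B^{n}$ we use reflection coupling --- $\bar X^{n}$ is driven by $(\id-2e_{t}e_{t}^{\tr})\,\md B^{n}_{t}$ with $e_{t}=(X^{n}_{t}-\bar X^{n}_{t})/|X^{n}_{t}-\bar X^{n}_{t}|$ up to the coupling time, synchronously afterwards --- and on the $\sigma$-noise we couple synchronously. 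Writing $r^{n}_{t}=|X^{n}_{t}-\bar X^{n}_{t}|$, the reflection forces the radial martingale to have quadratic variation at least $4\,\md t$, so by concavity of $f$ (Lemma \ref{lem:f_concave}) the It\^o second-order term is $\le 2f''(r^{n}_{t})\,\md t$. Splitting $b(t,X^{n}_{t},\mu^{n-1}_{t})-b(t,\bar X^{n}_{t},\mu_{t})$ into a same-measure difference, bounded via Assumption \ref{assu:uniform_H1}, and a same-point difference, bounded via Assumption \ref{assu:uniform_H2}, and using the identity $2f''(r)-r\kappa(r)f'(r)=-r$ that follows from the definition of $f$, one obtains (after the usual localization)
\[
\md f(r^{n}_{t})\le\big[-r^{n}_{t}+\eta f'(0)\,\cW_{1}(\mu^{n-1}_{t},\mu_{t})\big]\md t+\md(\text{martingale}).
\]
Since $r/\kappa_{\infty}\le f(r)\le f'(0)\,r$, taking expectations (with $\E f(r^{n}_{0})=0$) and applying Gr\"onwall gives $\sup_{t\ge0}\E f(r^{n}_{t})\le\Theta_{0}\,\sup_{t\ge0}\E[\cW_{1}(\mu^{n-1}_{t},\mu_{t})]$ with a constant $\Theta_{0}$ proportional to $\eta$.

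To close this into a recursion, insert the weighted i.i.d.\ empirical measure $\bar\mu^{n}_{t}:=\cK_{n}(\delta_{\bar X^{i}_{t}})$ and use the triangle inequality for $\cW_{1}$:
\[
\cW_{1}(\mu^{n-1}_{t},\mu_{t})\le\cK_{n-1}\big(|X^{i}_{t}-\bar X^{i}_{t}|\big)+\cW_{1}(\bar\mu^{n-1}_{t},\mu_{t}).
\]
Taking expectations, the first term becomes a $\cK$-average of the $\E f(r^{i}_{t})$ (using $r\le\kappa_{\infty}f(r)$), and the second is controlled, thanks to \eqref{eq:Lp} with $p>d+2$, by the i.i.d.\ weighted-empirical rate of Lemma \ref{lemma:iid}, which already carries the dichotomy $\varepsilon_{n}\lesssim\alpha_{n}^{1/(d+2)}$ when $\overline{\alpha}<2$ versus $\varepsilon_{n}\lesssim\prod_{i\le n}(1-\delta\alpha_{i})$ when $\overline{\alpha}\ge2$. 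With $a_{n}:=\sup_{t\ge0}\E f(r^{n}_{t})$ this yields a recursive inequality
\[
a_{n}\le\Theta\,\cK_{n-1}(a_{i})+C\varepsilon_{n-1},\qquad\Theta<1,
\]
the strict contraction coming from Assumption \ref{assu:uniform_H2}. Plugging this into the estimates for recursive inequalities of Subsection \ref{subsec:Estimates-for-recursive} gives $a_{n}\lesssim\varepsilon_{n}$ in case (1) and $a_{n}\lesssim\prod_{i\le n}(1-\delta\alpha_{i})$ in case (2); since $\cW_{f}(\cL(X^{n}_{t}),\mu_{t})\le\E f(r^{n}_{t})\le a_{n}$ this is the bound for the law. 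For the empirical measure, the analogous split $\cW_{f}(\mu^{n}_{t},\mu_{t})\le\cK_{n}\big(f(|X^{i}_{t}-\bar X^{i}_{t}|)\big)+\cW_{f}(\bar\mu^{n}_{t},\mu_{t})$ combined with the bounds on the $a_{i}$ and Lemma \ref{lemma:iid} finishes the proof, uniformly in $t$.

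The crux is the recursive inequality $a_{n}\le\Theta\,\cK_{n-1}(a_{i})+C\varepsilon_{n-1}$. In the classical mean-field system exchangeability collapses the estimates for all particles into one scalar inequality; here the $n$-th particle influences the system only through $\mu^{n-1}$, so the natural estimate is irreducibly recursive, and wringing the right rate out of it --- which is exactly what Subsection \ref{subsec:Estimates-for-recursive} is for, and which is why the rate dichotomy is dictated by $\overline{\alpha}$ and $\underline{\alpha}$ --- is the main difficulty. Establishing $\Theta<1$ (where reflection coupling, the weak-interaction bound of Assumption \ref{assu:uniform_H2}, and the moment control \eqref{eq:Lp} all come in) is part of this. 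A smaller, routine ingredient is the uniform-in-time $L^{p}$ bound $\sup_{t}\E|X^{n}_{t}|^{p}<\infty$ for the particles, needed to invoke Lemma \ref{lemma:iid} on the coupled system; it follows from Assumption \ref{assu:uniform_H1} by a Lyapunov argument, in parallel with \eqref{eq:Lp}.
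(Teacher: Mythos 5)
Your overall architecture --- reflection coupling on $B^{n}$, synchronous on the $\sigma$-noise, the ODE $2f''-r\kappa f'=-r$, the triangle split of $\cW_{1}(\mu_{t}^{n-1},\mu_{t})$ through the i.i.d.\ weighted empirical measure, and closing by the recursive estimates of Subsection~\ref{subsec:Estimates-for-recursive} --- is the paper's approach. (Closing the reflection at the coupling time instead of using the paper's smooth cutoffs $(\lambda^{n},\pi^{n})$ with the auxiliary Brownian motion $\widehat B^{n}$ is a defensible alternative, though it glosses over the well-posedness issue those cutoffs are there to handle.) However, the contraction step has a genuine gap.

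You set $a_{n}:=\sup_{t\ge0}\E f(r^{n}_{t})$ and claim a recursion $a_{n}\le\Theta\,\cK_{n-1}(a_{i})+C\varepsilon_{n-1}$ with ``$\Theta<1$ coming from Assumption~\ref{assu:uniform_H2}.'' Track the constant: Gronwall on
\[
\tfrac{\md}{\md t}\E f(r^{n}_{t})\le-\tfrac{1}{f'(0)}\E f(r^{n}_{t})+\eta f'(0)\,\E\cW_{1}(\mu^{n-1}_{t},\mu_{t})
\]
gives $\Theta_{0}=\eta f'(0)^{2}$, and converting $\cK_{n-1}(\E r^{i}_{t})\le\kappa_{\infty}\cK_{n-1}(a_{i})$ via $r\le\kappa_{\infty}f(r)$ gives $\Theta=\eta f'(0)^{2}\kappa_{\infty}$. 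But Lemma~\ref{lem:f_concave} shows $f'(0)\ge\kappa_{\infty}^{-1}$, i.e.\ $f'(0)\kappa_{\infty}\ge1$, so $\Theta<1$ would require $\eta<(f'(0)^{2}\kappa_{\infty})^{-1}\le 1/f'(0)$, strictly stronger than Assumption~\ref{assu:uniform_H2} except in the degenerate case $\kappa\equiv\kappa_{\infty}$. The loss is caused by the round trip between $r$ and $f(r)$: you pass from the drift $-\E r^{n}_{t}$ to $-\E f(r^{n}_{t})/f'(0)$ to run Gronwall on $f$, and then from $\E r^{i}_{t}$ to $\kappa_{\infty}\E f(r^{i}_{t})$ to close the recursion in $a$, and each conversion costs a factor. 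The paper avoids this by never converting back to $f$: it adds and subtracts $\varepsilon\E f(|\Delta^{n}_{t}|)$ with $0<\varepsilon<1/f'(0)-\eta$, uses the one-sided bound $f(r)\le f'(0)r$ to absorb $-\varepsilon\E f$ into the drift while retaining a $-(1-\varepsilon f'(0))\E|\Delta^{n}_{t}|$ term, applies Gronwall with weight $\me^{\varepsilon t}$, and then drops the nonnegative left-hand side $\E f(|\Delta^{n}_{t}|)\ge 0$ to land a recursion directly on $s_{n}=\int_{0}^{t}\me^{\varepsilon s}\cK_{n}(\E|\Delta^{i}_{s}|)\md s$, with contraction factor $\eta f'(0)/(1-\varepsilon f'(0))<1$ --- exactly the range Assumption~\ref{assu:uniform_H2} provides. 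The exponential weight also neutralizes the $T$-dependence that an unweighted time integral would carry. Without this device, your recursion cannot be closed under the stated hypotheses.
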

\begin{rem}
The estimate is still valid with $\cW_{1}(\cdot,\cdot)$
in place of $\cW_{f}(\cdot,\cdot)$ and with a new constant $C$.
Indeed, Lemma \ref{lem:f_concave} shows $f(r)\ge r/\kappa_{\infty}$,
so $\cW_{1}(\cdot,\cdot)\le\kappa_{\infty}\cW_{f}(\cdot,\cdot)$ if
$\kappa_{\infty}<+\infty$. In the case $\kappa_{\infty}=+\infty$,
one can truncate the function $\kappa(\cdot)$, namely, by defining
$\kappa^{\lambda}(r):=\kappa(r)\wedge\lambda$.
This will change the function $f$ and increase $f'(0)$, but by taking $\lambda$
large enough, one can again ensure $f'(0)<1/\eta$ so that Assumption
\ref{assu:uniform_H2} is satisfied.
Finally, we get the desired
estimate by using $\kappa^{\lambda}(\cdot)$ instead of $\kappa(\cdot)$.
\end{rem}
\begin{rem}
The uniform $L^{p}$-boundedness \eqref{eq:Lp} is used to obtain
the convergence rate of empirical measures in the Wasserstein distance. Of
course, this condition can be ensured by imposing some growth condition
on the coefficients. For example, one can assume additionally that
there are constants $K_{0}<2(\kappa_{\infty}-\eta)/(p-2)$ and $K_{1}\ge0$
such that $\sup_{t\ge0}|b(t,0,\delta_{0})|\le K_{1}$ and
\[
\|\sigma(t,x)\|^{2}\le K_{0}|x|^{2}+K_{1},\quad\forall\,t\ge0,\,x\in\R^{d}.
\]
Indeed, this along with Assumptions \ref{assu:uniform_H1} and
\ref{assu:uniform_H2} implies that 
\[
\left\langle x,b(t,x,\delta_{0})\right\rangle +\frac{p-1}{2}\|\sigma(t,x)\|^{2}\le-\eta_{1}|x|^{2}+K_{2}
\]
with some $\eta_{1}>\eta$ and $K_{2}>0$, and \eqref{eq:Lp} can
be derived by It\^o's formula. Moreover, the index $p$ can
be reduced based on further improvement of Lemma \ref{lemma:iid}. 
\end{rem}
\begin{rem}
Under Assumptions \ref{assu:uniform_H1} and \ref{assu:uniform_H2},
one can actually obtain the exponential contractivity of the McKean--Vlasov
process \eqref{eq:McKean--Vlasov-1} by the reflection coupling method.
Specifically, for the solutions $\mu_{t},\nu_{t}$ of \eqref{eq:McKean--Vlasov-1}
with the initial distributions $\mu_{0},\nu_{0}$, respectively, one
can prove that
\[
\cW_{f}(\mu_{t},\nu_{t})\le\me^{-(f'(0)^{-1}-\eta)t}\cW_{f}(\mu_{0},\nu_{0}).
\]
See Remark \ref{rem:contrac} for further explanation. Similar results
can be found in \citet[Corollary 2.8]{Liu_2021} where the noise is
additive. For time homogeneous processes, this estimate implies the
existence and uniqueness of the invariant probability measure.
\end{rem}
\begin{example}
Consider the Curie--Weiss mean-field lattice model
\[
\md X_{t}=[-\beta(X_{t}^{3}-X_{t})+\beta K\bE X_{t}]\md t+\md W_{t}.
\]
This model is ferromagnetic or anti-ferromagnetic according to $K>0$
or $K<0$. By an elementary calculation, Assumption \ref{assu:uniform_H1}
is satisfied by taking $\kappa(r)=\beta(\frac{r^{2}}{4}-1)$, and
Assumption \ref{assu:uniform_H2} holds if
\begin{align*}
\frac{1}{K} & >\frac{\beta}{2}\int_{0}^{+\infty}r\cdot\exp\bigg(-\frac{\beta}{2}\int_{0}^{r}\frac{s^{3}-4s}{4}\md s\bigg)\md r\\
 & =\beta\me^{\beta/2}\int_{-1}^{+\infty}\me^{-\beta x^{2}/2}\md x=\sqrt{2\pi\beta\me^{\beta}}\Phi(\sqrt{\beta}),
\end{align*}
where $\Phi(x)$ is the distribution function of $\mathcal{N}(0,1)$.
\end{example}

\section{Auxiliary lemmas\label{sec:3}}

\subsection{Estimates for recursive inequalities\label{subsec:Estimates-for-recursive}}

\begin{lem}
    \label{lem-gw}
    Let \(\{A_{n}\}_{n\geq 0}\), \(\{B_{n}\}_{n\geq 0}\) be two decreasing positive sequences and \(\varepsilon>0\).
    We write 
    \begin{equation*}
        A_{n}=A_{0}\prod_{i=1}^{n}(1-\varepsilon\alpha_{i}),\quad  B_{n}=B_{0}\prod_{i=1}^{n}(1-\varepsilon\beta_{i}),
    \end{equation*}
    where \(\alpha_{i},\beta_{i}\in(0,\varepsilon^{-1})\).    
    Let \(\{s_{n}\}_{n\geq 0}\) be a nonnegative sequence satisfying
    \begin{equation*}
        s_{n}\leq (1-\varepsilon\alpha_{n})s_{n-1}+\alpha_{n}B_{n}.
    \end{equation*}
    Then, if \[\limsup_{n\to\infty}\frac{\beta_{n+1}}{\alpha_{n}}<1,\] there exists \(C>0\) such that
    \(
        s_{n}\leq C B_{n};
    \)
    and if \[\liminf_{n\to\infty}\frac{\beta_{n+1}}{\alpha_{n}}>1,\] there exists \(C>0\) such that
    \(
        s_{n}\leq C A_{n}.
    \)
\end{lem}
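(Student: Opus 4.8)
The plan is to unfold the linear recursion, reduce both assertions to the control of a single scalar series, and then analyze that series by a summation-by-parts identity that makes the shifted ratios $\beta_{n+1}/\alpha_n$ from the hypotheses appear on their own.

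First, since $1-\varepsilon\alpha_i\in(0,1)$ and $s_n\ge0$, iterating the assumed inequality gives
\[
s_n\le\frac{A_n}{A_0}s_0+\sum_{k=1}^n\alpha_kB_k\prod_{i=k+1}^n(1-\varepsilon\alpha_i)=\frac{A_n}{A_0}s_0+A_nS_n,\qquad S_n:=\sum_{k=1}^n\alpha_k\frac{B_k}{A_k},
\]
because $\prod_{i=k+1}^n(1-\varepsilon\alpha_i)=A_n/A_k$. Hence it suffices to bound $S_n$, and, for assertion (i) only, to compare $A_n$ with $B_n$. Write $u_k:=B_k/A_k$. Using $\varepsilon\alpha_k=1-A_k/A_{k-1}$, so that $\varepsilon\alpha_ku_k=B_k(A_k^{-1}-A_{k-1}^{-1})$, summation by parts together with $B_k-B_{k-1}=-\varepsilon\beta_kB_{k-1}$ yields the identity
\[
S_n=\frac1\varepsilon\Big(\frac{B_n}{A_n}-\frac{B_0}{A_0}\Big)+\sum_{k=1}^n\beta_ku_{k-1}=\frac1\varepsilon\Big(\frac{B_n}{A_n}-\frac{B_0}{A_0}\Big)+\beta_1u_0+\sum_{k=1}^{n-1}\beta_{k+1}u_k .
\]
This is the crux: the last sum is comparable termwise to $S_n=\sum_k\alpha_ku_k$ precisely through the ratios $\beta_{k+1}/\alpha_k$.

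For assertion (ii), fix $\theta>0$ and $N_0$ with $\alpha_k-\beta_{k+1}\le-\theta\alpha_k$ for $k\ge N_0$. Rearranging the identity as $\sum_{k=1}^{n-1}(\alpha_k-\beta_{k+1})u_k=(\varepsilon^{-1}-\alpha_n)u_n+(\beta_1-\varepsilon^{-1})B_0/A_0$ and using $(\varepsilon^{-1}-\alpha_n)u_n\ge0$, the eventual negativity of $\alpha_k-\beta_{k+1}$ forces $\theta\sum_{k=N_0}^{n}\alpha_ku_k\le C$ for every $n$, with $C$ depending only on the first $N_0$ terms; hence $S_n$ is bounded uniformly in $n$, and the first display gives $s_n\le CA_n$.

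For assertion (i), fix $\theta\in(0,1)$ and $N_0\ge2$ with $\beta_k\le(1-\theta)\alpha_{k-1}$ for $k\ge N_0$. Splitting $\sum_k\beta_ku_{k-1}$ in the identity at $N_0$ and bounding its tail by $(1-\theta)\sum_{j=N_0-1}^{n-1}\alpha_ju_j\le(1-\theta)S_n$ gives $\theta S_n\le\varepsilon^{-1}B_n/A_n+C_1$, hence $A_nS_n\le C(A_n+B_n)$ and $s_n\le C(A_n+B_n)$. It remains to show $A_n\le CB_n$: since $\beta_{i+1}\le(1-\theta)\alpha_i\le\alpha_i$ for $i\ge N_0$, regrouping $A_n/B_n=(A_0/B_0)\prod_{i=1}^n(1-\varepsilon\alpha_i)/(1-\varepsilon\beta_i)$ by pairing $1-\varepsilon\alpha_i$ with $1-\varepsilon\beta_{i+1}$ leaves a leftover factor $(1-\varepsilon\alpha_n)/(1-\varepsilon\beta_1)\le(1-\varepsilon\beta_1)^{-1}$ times a product all but finitely many of whose factors are $\le1$; so $A_n/B_n$ stays bounded and $s_n\le C'B_n$. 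The main obstacle is the identity step — choosing the telescoping so that exactly the shifted ratios $\beta_{n+1}/\alpha_n$ emerge; once it is in place, both cases and the comparison $A_n\le CB_n$ follow by elementary index splitting, with no monotonicity of $\{\alpha_i\}$ or $\{\beta_i\}$ needed beyond $\alpha_i,\beta_i\in(0,\varepsilon^{-1})$.
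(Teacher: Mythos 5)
Your proof is correct and follows essentially the same route as the paper: unfold the linear recursion to reduce to bounding $S_n=\sum_{k\le n}\alpha_kB_k/A_k$, apply Abel summation (your identity is exactly the paper's \eqref{eq:abel} after combining $-\tfrac1\varepsilon B_0/A_0+\beta_1u_0=-\tfrac1\varepsilon B_1/A_0$), and then split the index range using the limsup/liminf hypothesis. The one place you add value is in case (i), where you explicitly establish $A_n\le CB_n$ by the shifted-pairing argument; the paper's chain $C(1+A_n^{-1}B_n)\le CA_n^{-1}B_n$ implicitly uses this comparison without deriving it.
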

\begin{rem}
    The estimate will fail when \(\lim_{n\to\infty}\frac{\beta_{n+1}}{\alpha_{n}}=1\).
    For example, we take \(\alpha_{n}=\beta_{n}=n^{-1}\), then by direct calculation \(s_{n}\sim n^{-1}\ln n \).
    Thus, neither \(s_{n}\leq CA_{n}\) nor \(s_{n}\leq CB_{n}\) holds.
\end{rem}
\begin{proof}
    Without loss of generality, we assume   \begin{equation*}
        s_{n}= (1-\varepsilon\alpha_{n})s_{n-1}+\alpha_{n}B_{n}.
    \end{equation*}
    Let \(t_{n}=s_{n}A_{n}^{-1}\), and we get 
    \begin{equation*}
        t_{n}=t_{n-1}+ \alpha_{n}A_{n}^{-1}B_{n}.
    \end{equation*}
    Notice that for any \(1\leq i\leq n\) we have
    \begin{equation*}
        \sum_{j=1}^{i}\alpha_{j}A_{j}^{-1}=\frac{1}{\varepsilon}(A_{i}^{-1}-A_{0}^{-1})\quad\text{and}\quad \sum_{j=1}^{i-1}\beta_{j+1}B_{j}=\frac{1}{\varepsilon}(B_{1}-B_{i}).
    \end{equation*}
    This implies 
    \begin{equation}
        \label{eq:abel}
    \begin{aligned}
        t_{n}&=t_{0}+\sum_{i=1}^{n}\alpha_{i}A_{i}^{-1}B_{i}\\
        &=t_{0}+\alpha_{1}A_{1}^{-1}B_{1}+\sum_{i=2}^{n}\Bigl(\sum_{j=1}^{i}\alpha_{j}A_{j}^{-1}-\sum_{j=1}^{i-1}\alpha_{j}A_{j}^{-1}\Bigr) B_{i}\\
        &=t_{0}+\sum_{i=1}^{n-1}\Bigl(\sum_{j=1}^{i}\alpha_{j}A_{j}^{-1}\Bigr)(B_{i}-B_{i+1})+\sum_{j=1}^{n}\alpha_{j}A_{j}^{-1}B_{n}\\
        &=t_{0}+\sum_{i=1}^{n-1}\beta_{i+1}(A_{i}^{-1}-A_{0}^{-1})B_{i} + \frac{1}{\varepsilon}(A_{n}^{-1}-A_{0}^{-1})B_{n}\\
        &=t_{0}+\sum_{i=1}^{n-1}\beta_{i+1}A_{i}^{-1}B_{i}+\frac{1}{\varepsilon}(A_{n}^{-1}B_{n}-A_{0}^{-1}B_{1}).
    \end{aligned}
\end{equation}

    In the case of \(\limsup_{n\to\infty}\frac{\beta_{n+1}}{\alpha_{n}}<1\), there exist \(N,\delta>0\) such that \(\beta_{i+1}<(1-\delta)\alpha_{i}\) for any \(i\geq N\).
    By \eqref{eq:abel}, for any \(n\) we have
    \begin{align*}
        \sum_{i=1}^{n}\alpha_{i}A_{i}^{-1}B_{i} \leq (1-\delta)\sum_{i=1}^{n-1}\alpha_{i}A_{i}^{-1}B_{i}+\frac{1}{\varepsilon}A_{n}^{-1}B_{n}+ \sum_{i=1}^{N}\beta_{i+1}A_{i}^{-1}B_{i},
    \end{align*}
    and hence 
    \begin{equation*}
       t_{0}+ \sum_{i=1}^{n}\alpha_{i}A_{i}^{-1}B_{i}\leq C(1+A_{n}^{-1}B_{n})\leq C A_{n}^{-1}B_{n}.
    \end{equation*}
    This implies 
    \begin{align*}
        s_{n}=A_{n}t_{n}=A_{n}\Bigl(t_{0}+\sum_{i=1}^{n}\alpha_{i}A_{i}^{-1}B_{i}\Bigr)\leq CB_{n}.
    \end{align*}
    
    In the case of \(\liminf_{n\to\infty}\frac{\beta_{n+1}}{\alpha_{n}}>1\), there exist \(N,\delta>0\) such that \(\beta_{i+1}>(1+\delta)\alpha_{i}\) for any \(i\geq N\).
    By \eqref{eq:abel}, for any \(n\) we have 
    \begin{equation*}
        \sum_{i=1}^{n}\alpha_{i}A_{i}^{-1}B_{i}  \geq (1+\delta) \sum_{i=1}^{n-1}\alpha_{i}A_{i}^{-1}B_{i}+\alpha_{n}A_{n}^{-1}B_{n} -\frac{1}{\varepsilon}A_{0}^{-1}B_{1}-\sum_{i=1}^{N}\beta_{i+1}A_{i}^{-1}B_{i},
    \end{equation*} 
    and hence 
    \begin{equation*}
      \sum_{i=1}^{n-1}\alpha_{i}A_{i}^{-1}B_{i}\leq C.
    \end{equation*}
    Similarly, we get 
    \begin{equation*}
        s_{n}\leq CA_{n}.
    \end{equation*}
    The proof is complete.
\end{proof}

\begin{cor}
\label{cor:weight}
Let $\alpha_{n}$ be a decreasing positive sequence with \(\alpha_{1}=1\). 
We write \(\alpha_{\infty}=\lim_{n\to\infty}\alpha_{n}\), \(\overline{\alpha}=\limsup_{n\to\infty}\frac{\alpha_{n}-\alpha_{n+1}}{\alpha_{n}^{2}}\), and \(\underline{\alpha}=\liminf_{n\to\infty}\frac{\alpha_{n}-\alpha_{n+1}}{\alpha_{n}^{2}}\).
We set \(w_{1}=1\) and define \(w_{n}\) recursively by
\begin{equation*}
    \alpha_{n}=\frac{w_{n}}{\sum_{i=1}^{n}w_{i}}.
\end{equation*}
If 
\( \overline{\alpha}< 2-\alpha_{\infty},\)
then there exists \(C>0\) such that
\[
\theta_{n}:=\frac{\sum_{i=1}^{n}w_{i}^{2}}{(\sum_{i=1}^{n}w_{i})^{2}}\le C\alpha_{n};
\]
if \(\overline{\alpha}\geq 2\),
then for any \(0<\varepsilon <2\wedge \underline{\alpha}\) there exists \(C>0\) such that
\[
\theta_{n}\le C \prod_{i=1}^{n}(1-\varepsilon \alpha_{n}\mathbbm{1}_{\{\varepsilon \alpha_{n}<1\}}).
\]

\end{cor}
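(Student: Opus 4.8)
The plan is to reduce the assertion to a scalar recursion for $\theta_{n}$ and then apply Lemma~\ref{lem-gw}. Write $S_{n}=\sum_{i=1}^{n}w_{i}$ and $Q_{n}=\sum_{i=1}^{n}w_{i}^{2}$, so that $\theta_{n}=Q_{n}/S_{n}^{2}$. From $\alpha_{n}=w_{n}/S_{n}$ one gets $w_{n}=\alpha_{n}S_{n}$, and then $S_{n}=S_{n-1}+w_{n}$ yields $S_{n}=S_{n-1}/(1-\alpha_{n})$ (note that $\alpha_{n}<1$ for $n\ge2$, as otherwise the $w_{n}$ are not well defined). Hence $Q_{n}=Q_{n-1}+\alpha_{n}^{2}S_{n}^{2}$, and therefore
\[
\theta_{n}=(1-\alpha_{n})^{2}\theta_{n-1}+\alpha_{n}^{2}=\bigl(1-\alpha_{n}(2-\alpha_{n})\bigr)\theta_{n-1}+\alpha_{n}\cdot\alpha_{n},\qquad\theta_{1}=1 .
\]
Everything now comes down to estimating the solution of this recursion. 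I shall repeatedly use that $\alpha_{\infty}>0$ forces $\overline{\alpha}=\underline{\alpha}=0$ (then $\alpha_{n}-\alpha_{n+1}\to0$ while $\alpha_{n}^{2}\to\alpha_{\infty}^{2}>0$); in particular the two cases are exhaustive and the borderline regime $2-\alpha_{\infty}\le\overline{\alpha}<2$ is empty.

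\emph{Case $\overline{\alpha}<2-\alpha_{\infty}$.} Fix $\varepsilon\in(\overline{\alpha},\,2-\alpha_{\infty})$; one checks $\varepsilon\alpha_{\infty}<1$, so there is $N$ with $\varepsilon\le2-\alpha_{n}$ and $\varepsilon\alpha_{n}<1$ for all $n\ge N$. For such $n$ the recursion above gives $\theta_{n}\le(1-\varepsilon\alpha_{n})\theta_{n-1}+\alpha_{n}\cdot\alpha_{n}$. Apply Lemma~\ref{lem-gw} to the tail $(\theta_{n})_{n\ge N}$ (re-indexed to start at $0$, i.e.\ $s_{k}=\theta_{N+k}$) with this $\varepsilon$, with $A_{n}:=\prod_{i}(1-\varepsilon\alpha_{i})$ — so that the sequence denoted $\alpha_{i}$ in the Lemma is our own $\alpha_{i}$ — and with $B_{n}:=\alpha_{n}$, which is strictly decreasing and positive on the tail, hence of the required form $B_{0}\prod(1-\varepsilon\beta_{i})$ with $\varepsilon\beta_{n}=(\alpha_{n-1}-\alpha_{n})/\alpha_{n-1}\in(0,1)$ (we may assume $\alpha_{n}$ strictly decreasing, the eventually constant case being handled directly). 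Then
\[
\limsup_{n\to\infty}\frac{\beta_{n+1}}{\alpha_{n}}=\frac{1}{\varepsilon}\,\limsup_{n\to\infty}\frac{\alpha_{n}-\alpha_{n+1}}{\alpha_{n}^{2}}=\frac{\overline{\alpha}}{\varepsilon}<1 ,
\]
so Lemma~\ref{lem-gw} gives $\theta_{n}\le CB_{n}=C\alpha_{n}$ for $n\ge N$, and the finitely many earlier terms are absorbed into $C$.

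\emph{Case $\overline{\alpha}\ge2$.} Here $\alpha_{\infty}=0$. If $\underline{\alpha}=0$ the claim holds vacuously (no admissible $\varepsilon$), so suppose $\underline{\alpha}>0$ and fix $\varepsilon\in(0,\,2\wedge\underline{\alpha})$. As $\alpha_{n}\to0$, for $n\ge N$ with $N$ large one again has $\varepsilon\le2-\alpha_{n}$ and $\varepsilon\alpha_{n}<1$, hence $\theta_{n}\le(1-\varepsilon\alpha_{n})\theta_{n-1}+\alpha_{n}\cdot\alpha_{n}$. Applying Lemma~\ref{lem-gw} with the same data $\varepsilon$, $A_{n}=\prod_{i}(1-\varepsilon\alpha_{i})$, $B_{n}=\alpha_{n}$, $\beta_{n}$ as above, this time
\[
\liminf_{n\to\infty}\frac{\beta_{n+1}}{\alpha_{n}}=\frac{1}{\varepsilon}\,\liminf_{n\to\infty}\frac{\alpha_{n}-\alpha_{n+1}}{\alpha_{n}^{2}}=\frac{\underline{\alpha}}{\varepsilon}>1 ,
\]
so $\theta_{n}\le CA_{n}=C\prod_{i\ge N}(1-\varepsilon\alpha_{i})$ for $n\ge N$. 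Enlarging $N$ so that $\varepsilon\alpha_{i}<1$ for all $i\ge N$, the product $\prod_{i=1}^{n}(1-\varepsilon\alpha_{i}\mathbbm{1}_{\{\varepsilon\alpha_{i}<1\}})$ equals $\prod_{i\ge N}(1-\varepsilon\alpha_{i})$ times the fixed positive constant $\prod_{i<N}(1-\varepsilon\alpha_{i}\mathbbm{1}_{\{\varepsilon\alpha_{i}<1\}})$, whence $\theta_{n}\le C\prod_{i=1}^{n}(1-\varepsilon\alpha_{i}\mathbbm{1}_{\{\varepsilon\alpha_{i}<1\}})$ after enlarging $C$ to cover the finitely many initial $n$.

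The computations are elementary; what requires care is the derivation of the recursion $\theta_{n}=(1-\alpha_{n})^{2}\theta_{n-1}+\alpha_{n}^{2}$ and the choice of comparison data for Lemma~\ref{lem-gw}. The natural envelope is $B_{n}=\alpha_{n}$ itself, and the conditions $\limsup\beta_{n+1}/\alpha_{n}<1$ and $\liminf\beta_{n+1}/\alpha_{n}>1$ become precisely $\varepsilon>\overline{\alpha}$ and $\varepsilon<\underline{\alpha}$ — which is exactly why the stated hypotheses on $\overline{\alpha},\underline{\alpha}$ are what is needed. The boundary value $\alpha_{1}=1$ (where $\varepsilon\alpha_{1}<1$ may fail) together with the indicator $\mathbbm{1}_{\{\varepsilon\alpha_{i}<1\}}$ are the reason one passes to a tail and absorbs a finite prefix into the constant.
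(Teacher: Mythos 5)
Your proof is correct and follows the same route as the paper: derive the recursion $\theta_{n}=(1-\alpha_{n})^{2}\theta_{n-1}+\alpha_{n}^{2}$, compare $\theta_n$ with a solution of $s_n=(1-\varepsilon\alpha_n)s_{n-1}+\alpha_n^2$ on a tail, and invoke Lemma~\ref{lem-gw} with $B_n=\alpha_n$ and the $\beta_n$ induced by it. You are somewhat more explicit than the paper about the edge cases (the need for $\alpha_n<1$ when $n\ge2$, the vacuity when $\underline{\alpha}=0$, the eventually-constant regime, and the passage from the tail product to the product with indicators), but the mathematical content is identical.
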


\begin{proof}
We have
\begin{align*}
\theta_{n}  =[1-(2-\alpha_{n})\alpha_{n}]\theta_{n}+\alpha_{n}^{2}.
\end{align*}
First, we assume \(\overline{\alpha}<2-\alpha_{\infty}\).
Let \(\varepsilon\in (\overline{\alpha},2-\alpha_{\infty})\) and assume  \(\varepsilon \alpha_{n}<1\) and   \(\varepsilon<2-\alpha_{n}\) for any \(n\geq N\).
We define 
\begin{equation}
    \label{eq:s}
    s_{n}=(1-\varepsilon \alpha_{n})s_{n}+\alpha_{n}^{2},\quad s_{N}=\theta_{N}.
\end{equation}
Then, for any \(n\geq N\)   we have  \(\theta_{n}\leq s_{n}\).
Notice that by setting \(B_{n}=\alpha_{n}\) we have \(\beta_{n}=\frac{\alpha_{n-1}-\alpha_{n}}{\varepsilon\alpha_{n-1}}\), and we further derive 
\begin{equation*}
    \limsup_{n\to\infty}\frac{\beta_{n+1}}{\alpha_{n}}=\overline{\alpha}<\varepsilon.
\end{equation*}
Therefore, by Lemma \ref{lem-gw} we obtain for any \(n\geq N\)
\begin{equation*}
    \theta_{n}\leq s_{n}\leq C B_{n}=C \alpha_{n}.
\end{equation*}
For the case \(\overline{\alpha}\geq 2\), let \(0<\varepsilon <2\wedge \underline{\alpha}\).
Define \(s_{n}\) as in \eqref{eq:s}.
Then similarly we have 
\begin{equation*}
    \liminf_{n\to\infty}\frac{\beta_{n+1}}{\alpha_{n}}=\underline{\alpha}>\varepsilon.
\end{equation*}
Therefore, by Lemma \ref{lem-gw} we obtain for any \(n\geq N\)
    \begin{equation*}
        \theta_{n}\leq s_{n}\leq C \prod_{i=N}^{n}(1-\varepsilon \alpha_{n}).
    \end{equation*}
    The proof is complete.
\end{proof}

\subsection{Wasserstein convergence of weighted empirical measures}

The convergence of (unweighted) empirical measures in the Wasserstein
distance has been extensively investigated in the literature (see
\citet{horowitz1994mean,fournier2015rate,ambrosio2019pde} and references
therein). Here we prove an estimate for weighted empirical measures,
based on the following density coupling lemma that can be proved analogously
as \citet[Lemma 2.2]{horowitz1994mean}.
\begin{lem}
\label{lem:coupling}Let $f$ and $g$ be probability density functions
on $\R^{d}$ such that
\[
\int_{\R^{d}}|x|^{r}[f(x)+g(x)]\md x<\infty,\quad r\ge1,
\]
and define $\mu(\md x)=f(x)\md x$ and $\nu(\md x)=g(x)\md x$. Then
one has
\[
\cW_{r}(\mu,\nu)^{r}\le C_{r}\int_{\R^{d}}|x|^{r}|f(x)-g(x)|\md x.
\]
\end{lem}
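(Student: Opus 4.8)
The plan is to exhibit an explicit transport plan between $\mu$ and $\nu$: keep the mass common to both densities fixed (zero cost) and couple the two residual pieces independently, exactly as in \citet[Lemma 2.2]{horowitz1994mean}. First I would set $h:=f\wedge g$ and
\[
m:=\int_{\R^{d}}(f-g)_{+}\,\md x=\int_{\R^{d}}(g-f)_{+}\,\md x=1-\int_{\R^{d}}h\,\md x ,
\]
the equality of the two integrals being the standard consequence of $\int f=\int g=1$. If $m=0$ then $f=g$ almost everywhere, the diagonal coupling gives $\cW_{r}(\mu,\nu)=0$, and the inequality is trivial; so assume $m>0$.

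Next I would define the measure $\pi:=\pi_{0}+\pi_{1}$ on $\R^{d}\times\R^{d}$, where $\pi_{0}$ is the pushforward of $h(x)\,\md x$ under $x\mapsto(x,x)$ and
\[
\pi_{1}(\md x\,\md y):=\tfrac{1}{m}\,(f-g)_{+}(x)\,(g-f)_{+}(y)\,\md x\,\md y .
\]
I would then verify admissibility: integrating $\pi$ over $y$ gives $h(x)\,\md x+\tfrac{1}{m}(f-g)_{+}(x)\big(\!\int(g-f)_{+}\,\md y\big)\md x=\big(h+(f-g)_{+}\big)(x)\,\md x=f(x)\,\md x$, i.e.\ the first marginal is $\mu$, and symmetrically the second is $\nu$.

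Finally I would estimate the cost. Since $\pi_{0}$ contributes nothing and $\cW_{r}(\mu,\nu)^{r}\le\int|x-y|^{r}\,\pi(\md x\,\md y)$, the convexity bound $|x-y|^{r}\le 2^{r-1}(|x|^{r}+|y|^{r})$ (valid for $r\ge1$) yields
\[
\cW_{r}(\mu,\nu)^{r}\le\frac{2^{r-1}}{m}\Big[m\!\int_{\R^{d}}|x|^{r}(f-g)_{+}\,\md x+m\!\int_{\R^{d}}|y|^{r}(g-f)_{+}\,\md y\Big]=2^{r-1}\!\int_{\R^{d}}|x|^{r}|f(x)-g(x)|\,\md x ,
\]
which is finite because $(f-g)_{+}\le f$, $(g-f)_{+}\le g$, and $\int|x|^{r}(f+g)\,\md x<\infty$ by hypothesis. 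This proves the claim with $C_{r}=2^{r-1}$. There is no genuine obstacle here; the only points requiring a line of care are the marginal computation (resting on $\int(f-g)_{+}=\int(g-f)_{+}$) and the separate treatment of the degenerate case $m=0$.
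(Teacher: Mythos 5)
Your proof is correct and is exactly the standard coupling argument from \citet[Lemma~2.2]{horowitz1994mean} that the paper itself defers to (the paper gives no proof, citing that reference). The construction $\pi = \pi_0 + \pi_1$ splitting off the common mass $h=f\wedge g$ on the diagonal and coupling the residuals $(f-g)_+$ and $(g-f)_+$ as a product, the marginal check resting on $\int(f-g)_+=\int(g-f)_+=m$, the degenerate case $m=0$, and the final bound via $|x-y|^{r}\le 2^{r-1}(|x|^{r}+|y|^{r})$ giving $C_r=2^{r-1}$ are all in order.
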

Then we have the following convergence result.

\begin{lem}
\label{lemma:iid}Let $r\in[1,\infty)$ and $p\in[1,\infty)$ and
$\xi_{n}$ be i.i.d. $\R^{d}$-valued random variables and $m:=\cL(\xi_{1})\in\mathscr{P}_{q}$
with $q>rp+(p-1)d$, and let
\[
m_{n}:=\frac{\sum_{i=1}^{n}w_{i}\delta_{\xi_{i}}}{\sum_{i=1}^{n}w_{i}}\quad\text{with }\ w_{n}>0,\ n=1,2,\dots.
\]
Then there is a constant $C$ depending only on $r,p,d$ and $\|\xi_{1}\|_{L^{q}}$,
such that 
\begin{align*}
\bE[\cW_{r}(m_{n},m)^{rp}] & \le C\theta_{n}^{\gamma_{r,p,d}}
\end{align*}
with
\[
\theta_{n}=\frac{\sum_{i=1}^{n}w_{i}^{2}}{(\sum_{i=1}^{n}w_{i})^{2}},\quad\gamma_{r,p,d}=\frac{rp}{2r+2(1-1/p)d}.
\]
\end{lem}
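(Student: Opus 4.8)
The plan is to reduce the weighted-empirical-measure estimate to a known convergence rate for \emph{unweighted} empirical measures by a randomization-of-sample-size argument, combined with the moment control furnished by $m\in\mathscr{P}_q$. First I would truncate: split $\R^d$ into the ball $B_R=\{|x|\le R\}$ and its complement, with $R$ to be chosen as a power of $\theta_n$ at the end. On $B_R^c$, the contribution to $\cW_r(m_n,m)^{rp}$ is controlled crudely by $\E[\sum w_i|\xi_i|^r\mathbbm 1_{|\xi_i|>R}]/(\sum w_i)$ plus $\int_{B_R^c}|x|^r m(\md x)$, and the $q$-th moment bound turns each of these into something of order $R^{-(q-r)}$ (after Hölder), which will be negligible for the right choice of $R$. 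The core is therefore the estimate on $B_R$, where all mass is bounded.

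For the bounded part, the key step is a smoothing/coupling step: mollify $m$ (restricted and renormalized on $B_R$) by a Gaussian kernel at scale $h$, so that both $m$ and the convolved version $m^h$ have densities, and similarly smooth $m_n$; then apply the density coupling Lemma \ref{lem:coupling} to bound $\cW_r(m_n^h, m^h)^r$ by $C_r\int_{B_{2R}}|x|^r|f_n^h(x)-f^h(x)|\md x$, where $f^h$ is the density of $m^h$ and $f_n^h$ that of $m_n^h$. The triangle inequality $\cW_r(m_n,m)\le \cW_r(m_n,m_n^h)+\cW_r(m_n^h,m^h)+\cW_r(m^h,m)$ handles the mollification error: the first and third terms are $O(h)$ deterministically. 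For the middle term, $f_n^h(x)-f^h(x) = \sum_i w_i \phi_h(x-\xi_i)/\sum_i w_i - \E[\phi_h(x-\xi_1)]$ is, for each fixed $x$, a centered weighted average of i.i.d.\ bounded random variables, so $\E|f_n^h(x)-f^h(x)|\le (\E|f_n^h(x)-f^h(x)|^2)^{1/2}\le C\,h^{-d/2}\,\theta_n^{1/2}$ using $\mathrm{Var}(\phi_h(x-\xi_1))\le Ch^{-d}$ and $\sum w_i^2/(\sum w_i)^2=\theta_n$. Integrating over $B_{2R}$ (volume $\sim R^d$), raising to the $p$-th power, and using Jensen/Hölder to move the expectation inside, one gets the bounded-part contribution bounded by $C\,R^{?}\,(h^{-d/2}\theta_n^{1/2})^{p}+ C h^{rp}$ up to lower-order terms — I would track the $R$-powers carefully here since that is where the exponent $(1-1/p)d$ enters.

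The final step is optimization: balance the three error scales — the tail term $\sim R^{-(q-r)p'}$-type, the mollification bias $\sim h^{rp}$, and the fluctuation term $\sim R^{c}h^{-dp/2}\theta_n^{p/2}$ — by choosing $h$ a suitable power of $\theta_n$ and $R$ a suitable power of $\theta_n$ (or $h$). Matching $h^{rp}$ against the fluctuation term gives $h\sim (\theta_n^{1/2})^{2/(2r+d)}$ roughly, and then the requirement $q>rp+(p-1)d$ is exactly what is needed to make the truncation error no worse than the resulting rate $\theta_n^{\gamma_{r,p,d}}$ with $\gamma_{r,p,d}=rp/(2r+2(1-1/p)d)$. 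I expect the main obstacle to be the bookkeeping in this three-way optimization — in particular getting the $R$-dependence of the fluctuation term sharp enough (via Hölder with the $q$-th moment rather than a crude volume bound on $B_{2R}$) so that the exponent comes out to the stated $\gamma_{r,p,d}$ and not something weaker; the probabilistic content (a second-moment/variance computation for weighted i.i.d.\ averages) is routine, and the structural inputs (Lemma \ref{lem:coupling}, concavity, moments) are already in place.
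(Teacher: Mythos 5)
Your overall architecture (mollify, apply Lemma \ref{lem:coupling}, control the fluctuation of the mollified density, then balance) is the same as the paper's, but there is a genuine gap in the fluctuation estimate, and it propagates to the final exponent.

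The paper does not truncate in space. Instead it applies H\"older's inequality once, using that $(1+|x|)^{r-q/p}\in L^{p^{*}}(\R^d)$ under the hypothesis $q>rp+(p-1)d$, to pass from $\bigl(\int|x|^r|\pi_n^\sigma-\pi^\sigma|\bigr)^p$ directly to $\int(1+|x|)^q|\pi_n^\sigma-\pi^\sigma|^p$. This makes the $R$-truncation unnecessary and removes the three-way optimization you are worried about: the moment hypothesis is used exactly here and nowhere else. More importantly, the quantity it then estimates is the \emph{$p$-th} moment $\E|\pi_n^\sigma(x)-\pi^\sigma(x)|^p$, via a Burkholder--Davis--Gundy / Marcinkiewicz--Zygmund inequality for the weighted sum: $\E|\mathcal K_n(\eta_i)|^p\le C_p\,\theta_n^{p/2}\,\E|\eta_1|^p$. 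This is not a ``routine variance computation'': one genuinely needs a $p$-th moment inequality, and then the Gaussian scaling identity $\phi_\sigma^p=c_{p,d}\,\sigma^{d(1-p)}\phi_{\sigma/\sqrt{p}}$ gives $\E|\phi_\sigma(x-\xi)|^p = c_{p,d}\,\sigma^{d(1-p)}\,(\Phi_{\sigma/\sqrt p}*m)(x)$. Integrated against $(1+|x|)^q$ this produces the fluctuation term $\sigma^{d(1-p)}\theta_n^{p/2}$.

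Your proposed fluctuation rate $h^{-dp/2}\theta_n^{p/2}$ is obtained by first bounding $\E|f_n^h(x)-f^h(x)|\le(\E|f_n^h-f^h|^2)^{1/2}\le Ch^{-d/2}\theta_n^{1/2}$ and then ``raising to the $p$-th power.'' This step fails for $p>2$: Jensen goes the wrong way, $\E|Z|^p\ge(\E Z^2)^{p/2}$, so a pointwise $L^1$ (or $L^2$) bound does not imply the needed bound on $\E\bigl(\int_{B_{2R}}|x|^r|f_n^h-f^h|\bigr)^p$. (For $1\le p<2$ the Jensen step is legal but the resulting exponent is weaker than $\gamma_{r,p,d}$.) This is not merely a constant issue; it changes the exponent of $h$. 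Balancing $h^{-dp/2}\theta_n^{p/2}$ against the bias $h^{rp}$ yields $\theta_n^{rp/(2r+d)}$, whereas balancing the correct $h^{d(1-p)}\theta_n^{p/2}$ against $h^{rp}$ yields $\theta_n^{rp^2/(2rp+2d(p-1))}=\theta_n^{\gamma_{r,p,d}}$; these coincide only at $p=2$. So as written the proposal would not produce the lemma's rate. To repair it, replace the second-moment bound by the $p$-th moment inequality for weighted sums of i.i.d.\ centered variables together with the $\phi_\sigma^p\sim\sigma^{d(1-p)}\phi_{\sigma/\sqrt p}$ identity, and drop the truncation in favour of the single H\"older step with $(1+|x|)^q$.
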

\begin{rem}
\label{rem:iid}Define $\alpha_{n}$ as the same way in Corollary \ref{cor:weight}.
 Then we have if \(\overline{\alpha}<2-\alpha_{\infty}\) 
\begin{equation*}
    \bE[\cW_{r}(m_{n},m)^{rp}] \le C\alpha_{n}^{\gamma_{r,p,d}};
\end{equation*}
if \(\overline{\alpha}\geq 2\), then for any \(0<\varepsilon<2\wedge \underline{\alpha}\),
\begin{equation*}
    \bE[\cW_{r}(m_{n},m)^{rp}] \le C\prod_{i=1}^{n}(1-\varepsilon\alpha_{i}\mathbbm{1}_{\{\varepsilon \alpha_{i}<1\}})^{\gamma_{r,p,d}}.
\end{equation*}
\end{rem}
\begin{proof}
Let $\phi_{\sigma}$ be the p.d.f. of the normal distribution $\Phi_{\sigma}=N(0,\sigma^{2}I_{d})$, and define mollified measures
\[
m_{n}^{\sigma}:=\Phi_{\sigma}*m_{n},\quad m^{\sigma}:=\Phi_{\sigma}*m.
\]
Denote the densities of  $m^{\sigma}$ and \(\mu^{\sigma}_{n}\) by \(\pi^{\sigma}\) and \(\pi^{\sigma}_{n}\) respectively. 
Then
$\pi_{n}^{\sigma}(\cdot)$ is given by 
\[
\pi_{n}^{\sigma}(x)=\frac{\sum_{i=1}^{n}w_{i}\phi_{\sigma}(x-\xi^{i})}{\sum_{i=1}^{n}w_{i}}=:\mathcal{K}_{n}(\phi_{\sigma}(x-\xi^{i})).
\]
Since \(q>rp+(p-1)d\), we notice 
\begin{equation*}
    (1+|x|)^{r-\frac{q}{p}}\in L^{p^{*}}(\bR^{d}),
\end{equation*}
where \(1/p+1/p^{*}=1\).
From Lemma \ref{lem:coupling} and H\"older's inequality, it follows that
\begin{equation}
    \label{eq:wr}
\begin{aligned}
\cW_{r}(m_{n}^{\sigma},m^{\sigma})^{rp} & \le\bigg(C_{r}\int|x|^{r}|\pi_{n}^{\sigma}(x)-\pi^{\sigma}(x)|\md x\bigg)^{p}\\
    &\leq C_{r}\|(1+|x|)^{r-q/p}\|_{L^{p^{*}}}^{p}\|(1+|x|)^{q/p}|\pi_{n}^{\sigma}(x)-\pi^{\sigma}(x)|\|_{L^{p}}^{p}\\
 & \le C_{r,p,q,d}\int(1+|x|)^{q}|\pi_{n}^{\sigma}(x)-\pi^{\sigma}(x)|^{p}\md x.
\end{aligned}
\end{equation}
For any i.i.d. random variables $\eta^{i}$ with zero mean, by Burkholder--Davis--Gundy inequality we derive
\begin{align*}
\E|\mathcal{K}_{n}(\eta_{i})|^{p} & =\frac{1}{(\sum_{i=1}^{n} w_{i})^{p}}\E\Big|\sum_{i=1}^{n} w_{i}\eta_{i}\Big|^{p}\le\frac{C_{p}}{(\sum_{i=1}^{n} w_{i})^{p}}\E\Big|\sum_{i=1}^{n} w_{i}^{2}|\eta_{i}|^{2}\Big|^{\frac{p}{2}}\\
 & \le C_{p}\frac{(\sum_{i=1}^{n} w_{i}^{2})^{\frac{p}{2}}}{(\sum_{i=1}^{n} w_{i})^{p}}\E\Big|\Bigl(\sum_{i=1}^{n} w_{i}^{2}\Bigr)^{-1}\sum_{i=1}^{n} w_{i}^{2}|\eta_{i}|^{2}\Big|^{\frac{p}{2}}\\
 & \le C_{p}\frac{(\sum_{i=1}^{n} w_{i}^{2})^{\frac{p}{2}}}{(\sum_{i=1}^{n}w_{i})^{p}}\E\Big[\Bigl(\sum_{i=1}^{n} w_{i}^{2}\Bigr)^{-1}\sum_{i=1}^{n} w_{i}^{2}|\eta_{i}|^{p}\Big]\\
 & =C_{p}\err^{\frac{p}{2}}\E|\eta_{1}|^{p}.
\end{align*}
Thus, we have 
\begin{align*}
\bE|\pi_{n}^{\sigma}(x)-\pi_{}^{\sigma}(x)|^{p} & =\bE|\mathcal{K}_{n}(\phi_{\sigma}(x-\xi^{i})-\bE\phi_{\sigma}(x-\xi))|^{p}\\
 & \le C_{p}\err^{\frac{p}{2}}\bE|\phi_{\sigma}(x-\xi)|^{p}.
\end{align*}
We observe that $\phi_{\sigma}^{p}(x)=(2\pi)^{-\frac{pd}{2}}\sigma^{-pd}e^{-\frac{p|x|^{2}}{2\sigma^{2}}}=p^{-\frac{d}{2}}(2\pi)^{\frac{d}{2}(1-p)}\sigma^{d(1-p)}\phi_{\sigma/\sqrt{p}}(x)$, and we calculate
\begin{align*}
\bE|\phi_{\sigma}(x-\xi)|^{p } & =\int\phi_{\sigma}^{p}(x-y)m(\md y)\\
 & =p^{-\frac{d}{2}}(2\pi)^{\frac{d}{2}(1-p)}\sigma^{d(1-p)}\int\phi_{\sigma/\sqrt{p}}(x-y)m(\md y)\\
 & =C_{p,d}\sigma^{d(1-p)}\int\phi_{\sigma/\sqrt{p}}(x-y)m(\md y).
\end{align*}
Plug above estimates into \eqref{eq:wr}, then we get
\begin{align*}
\bE[\cW_{r}(m_{n}^{\sigma},m^{\sigma})^{rp}] & \le C_{r,p,q,d}\int(1+|x|)^{q}\E|\pi_{n}^{\sigma}(x)-\pi^{\sigma}(x)|^{p}\md x\\
 & \le C_{r,p,q,d}\sigma^{d(1-p)}\err^{\frac{p}{2}}\iint(1+|x|)^{q}\phi_{\sigma/\sqrt{p}}(x-y)\,m(\md y)\md x\\
 &  \le C_{r,p,q,d}\sigma^{d(1-p)}\err^{\frac{p}{2}}\iint(1+|x|+|y|)^{q}\phi_{\sigma/\sqrt{p}}(x)\md x \,m(\md y)\\
 & \le C_{r,p,q,d}\sigma^{d(1-p)}\err^{\frac{p}{2}}\E(1+|\xi_{1}|)^{q}.
\end{align*}
On the other hand, it is easily seen that 
\[
\cW_{r}(m_{n}^{\sigma},m_{n})^{rp}+\cW_{r}(m^{\sigma},m)^{rp}\le C\sigma^{rp}.
\]
Noticing
\begin{equation*}
    \cW_{r}(m_{n},m)\leq \cW_{r}(m_{n},m_{n}^{\sigma})+\cW_{r}(m_{n}^{\sigma},m^{\sigma})+\cW_{r}(m^{\sigma},m),
\end{equation*}
we derive 
\begin{equation*}
    \E[\cW_{r}(m_{n},m)^{rp}]\leq C (\sigma^{rp}+\sigma^{d(1-p)}\theta_{n}^{\frac{p}{2}}).
\end{equation*}
We conclude the proof by taking $\sigma=\theta_{n}^{1/[2r+2(1-p^{-1})d]}$.
\end{proof}

\section{Proof of Theorem \ref{thm:APoC-W}\label{sec:4}}
Recall the sequential propagation system
\begin{equation}
\left\{
\begin{aligned}\md X_{t}^{n} & =b(t,X_{t}^{n},\mu_{t}^{n-1})\md t+\sigma(t,X_{t}^{n},\mu_{t}^{n-1})\md W_{t}^{n};\\
\mu_{t}^{n} & =\mu_{t}^{n-1}+\alpha_{n}(\delta_{X_{t}^{n}}-\mu_{t}^{n-1}),
\end{aligned}
\right.
\end{equation}
with \(X^{1}_{t}\equiv X^{1}_{0}\) and \(\mu_{t}^{1}=\delta_{X_{t}^{1}}\).
To apply the synchronous coupling method, we introduce a sequence of i.i.d. McKean--Vlasov processes $Y^{n}$
defined by 
\begin{equation}
    \left\{
\begin{aligned}\md Y_{t}^{n} & =b(t,Y_{t}^{n},\mu_{t})\md t+\sigma(t,Y_{t}^{n},\mu_{t})\md W_{t}^{n},\\
Y_{0}^{n} & =X_{0}^{n}\sim\mu_{0},
\end{aligned}\right.
\label{eq:coupling-1}
\end{equation}
where $\mu_{t}=\cL(X_{t})$ from (\ref{eq:McKean--Vlasov}) is also the law of $Y_{t}^{n}$.

Fix a constant $\lambda\ge0$ that will be specified later. 
Define the discounted difference
\[
\Delta_{t}^{n,\lambda}=\me^{-\lambda t}(X_{t}^{n}-Y_{t}^{n}).
\]
Applying It\^o's formula we compute
\begin{align*}
 & \quad\md|\Delta_{t}^{n,\lambda}|^{2p}\\
 & =-2p\lambda|\Delta_{t}^{n,\lambda}|^{2p}\md t+p\me^{-2\lambda t}|\Delta_{t}^{n,\lambda}|^{2p-2}\Big\{2(X_{t}^{n}-Y_{t}^{n})^{\tr}[b(t,X_{t}^{n},\mu_{t}^{n-1})-b(t,Y_{t}^{n},\text{\ensuremath{\mu}}_{t})]\\
 & \quad+2(p-1)|X_{t}^{n}-Y_{t}^{n}|^{-2}\big|(X_{t}^{n}-Y_{t}^{n})^{\tr}[\sigma(t,X_{t}^{n},\mu_{t}^{n-1})-\sigma(t,Y_{t}^{n},\text{\ensuremath{\mu}}_{t})]\big|^{2}\\
 &\quad+\|\sigma(t,X_{t}^{n},\mu_{t}^{n-1})-\sigma(t,Y_{t}^{n},\text{\ensuremath{\mu}}_{t})\|^{2}\Big\}\md t\\
 & \quad+2p\me^{-\lambda t}|\Delta_{t}^{n,\lambda}|^{2p-2}(\Delta_{t}^{n,\lambda})^{\tr}[\sigma(t,X_{t}^{n},\mu_{t}^{n-1})-\sigma(t,Y_{t}^{n},\text{\ensuremath{\mu}}_{t})]\md W_{t}^{n}.
\end{align*}
By Assumption \ref{assu:lipschitz} and Young's inequality we have
\begin{equation}
\begin{aligned}\md|\Delta_{t}^{n,\lambda}|^{2p} & \le\big[(C-2p\lambda)|\Delta_{t}^{n,\lambda}|^{2p}+C|\Delta_{t}^{n,\lambda}|^{2p-2}\me^{-2\lambda t}\mathcal{W}_{2}(\mu_{t}^{n},\mu_{t})^{2}\big]\md t\\
 & \quad+2p\me^{-\lambda t}|\Delta_{t}^{n,\lambda}|^{2p-2}(\Delta_{t}^{n,\lambda})^{\tr}[\sigma(t,X_{t}^{n},\mu_{t}^{n-1})-\sigma(t,Y_{t}^{n},\text{\ensuremath{\mu}}_{t})]\md W_{t}^{n}\\
 & \le\big[(C-2p\lambda)|\Delta_{t}^{n,\lambda}|^{2p}+C\me^{-2p\lambda t}\mathcal{W}_{2}(\mu_{t}^{n-1},\mu_{t})^{2p}\big]\md t\\
 & \quad+2p\me^{-\lambda t}|\Delta_{t}^{n,\lambda}|^{2p-2}(\Delta_{t}^{n,\lambda})^{\tr}[\sigma(t,X_{t}^{n},\mu_{t}^{n-1})-\sigma(t,Y_{t}^{n},\text{\ensuremath{\mu}}_{t})]\md W_{t}^{n},
\end{aligned}
\label{eq:3-01}
\end{equation}
where $C=C(p,L)$. 
Taking the expectation and by Gronwall inequality, we have 
\begin{equation}
    \label{eq:delta}
    \E|\Delta_{t}^{n,\lambda}|^{2p}\leq \me^{(C-2p\lambda)t}C\int_{0}^{t}\me^{-2p\lambda s}\E[\cW_{2}(\mu_{s}^{n-1},\mu_{s})^{2p}]\md s.
\end{equation}
We take \(\varepsilon\in(0,1)\), \(\lambda=\frac{(2-\varepsilon)C}{(1-\varepsilon)2p}\) and further derive 
\begin{equation}
    \label{eq:gw}
    \begin{aligned}
    \int_{0}^{T}\E|\Delta_{t}^{n,\lambda}|^{2p}\md t &\leq \frac{C}{2p\lambda-C}\int_{0}^{T}\me^{-2p\lambda t}\E[\cW_{2}(\mu_{t}^{n-1},\mu_{t})^{2p}]\md t\\
    &=(1-\varepsilon)\int_{0}^{T}\me^{-2p\lambda t}\E[\cW_{2}(\mu_{t}^{n-1},\mu_{t})^{2p}]\md t.
    \end{aligned}
\end{equation}
It follows from the property of the Wasserstein distance and Lemma \ref{lemma:iid} that
\begin{align*}
\E[\mathcal{W}_{2}(\mu_{t}^{n-1},\mu_{t})^{2p}] & \le (1+\varepsilon) \bE[\mathcal{W}_{2}(\mathcal{K}_{n-1}(\delta_{X_{t}^{i}}),\mathcal{K}_{n-1}(\delta_{Y_{t}^{i}}))^{2p}]+C_{p}\bE[\mathcal{W}_{2}(\mathcal{K}_{n-1}(\delta_{Y_{t}^{i}}),\mu_{t})^{2p}]\\
 & \leq (1+\varepsilon)\bE[\mathcal{K}_{n-1}(|X_{t}^{i}-Y_{t}^{i}|^{2p})]+C_{p}\theta_{n}^{\gamma}.
\end{align*}
By \(s_{n}\) we denote \(\int_{0}^{T}\cK_{n}(\E|\Delta_{t}^{i,\lambda}|^{2p})\md t\).
Combining the above inequality with \eqref{eq:gw}, we obtain 
\begin{align*}
    s_{n}&=s_{n-1}+\alpha_{n}\Bigl(\int_{0}^{T}\E|\Delta_{t}^{n,\lambda}|^{2p}\md t-s_{n-1}\Bigr)\\
    &\leq s_{n-1}+\alpha_{n}(1-\varepsilon)[(1+\varepsilon)s_{n-1}+C_{p}\theta_{n}^{\gamma}]-\alpha_{n}s_{n-1}\\
    &\leq (1-\varepsilon^{2}\alpha_{n})s_{n-1} +\alpha_{n}C\theta_{n}^{\gamma}.
\end{align*}
Here, \(\gamma=p/(2+(1-1/p)d)\).

In the case of \(\overline{\alpha}<2-\alpha_{\infty}\), we have \(\theta_{n}\leq C \alpha_{n}\).
Set \(B_{n}=\alpha_{n}^{\gamma}\), then we have \(\beta_{n}=\frac{\alpha_{n-1}^{\gamma}-\alpha_{n}^{\gamma}}{\varepsilon^{2}\alpha_{n-1}^{\gamma}}\).
Thus, if \begin{equation*}
    \limsup_{n\to\infty}\frac{\alpha_{n}^{\gamma}-\alpha_{n+1}^{\gamma}}{\alpha_{n}^{\gamma+1}}=\gamma\overline{\alpha}<1,
\end{equation*}
by Lemma \ref{lem-gw} we have \(s_{n}\leq C\alpha_{n}^{\gamma}\).
If\begin{equation*}
    \limsup_{n\to\infty}\frac{\alpha_{n}^{\gamma}-\alpha_{n+1}^{\gamma}}{\alpha_{n}^{\gamma+1}}=\gamma\overline{\alpha}\geq1,
\end{equation*} 
then by Lemma \ref{lem-gw} we have 
\begin{equation*}
    s_{n}\leq C\prod_{i=1}^{n}(1-\delta \alpha_{i})
\end{equation*}
for any \(\delta<1\wedge\underline{\alpha}\gamma\).

In the case of \(\overline{\alpha}\geq 2\), we have, for any \(\eta<2\wedge \underline{\alpha}\),
\begin{equation*}
    \theta_{n}\leq C \prod_{i=1}^{n}(1-\eta \alpha_{i}\mathbbm{1}_{\{\eta\alpha_{i}<1\}})\leq C\prod_{i=1}^{n-1}(1-\eta \alpha_{i}\mathbbm{1}_{\{\eta\alpha_{i}<1\}}).
\end{equation*} 
Set \(B_{n}=\prod_{i=1}^{n-1}(1-\eta \alpha_{i}\mathbbm{1}_{\{\eta\alpha_{i}<1\}})^{\gamma}\), then we derive 
\begin{equation*}
    \beta_{n}=\frac{1}{\varepsilon^{2}}[1-(1-\eta \alpha_{n-1}\mathbbm{1}_{\{\eta\alpha_{n-1}<1\}})^{\gamma}].
\end{equation*}
Thus, we compute 
\begin{equation*}
    \lim_{n\to\infty}\frac{\beta_{n+1}}{\alpha_{n}}=\lim_{n\to\infty} \frac{1-(1-\eta\alpha_{n})^{\gamma}}{\varepsilon^{2}\alpha_{n}}=\frac{\eta\gamma}{\varepsilon^{2}}.
\end{equation*}
By Lemma \ref{lem-gw}, we obtain, for any \(\delta<1\wedge\underline{\alpha}\gamma \wedge 2\gamma\),
\begin{equation*}
    s_{n}\leq C\prod_{i=1}^{n}(1-\delta\alpha_{i}).
\end{equation*}

To sum up, we plug above estimates into \eqref{eq:delta} and obtain:
\begin{enumerate}
    \item if \(\overline{\alpha}<\gamma^{-1}\wedge(2-\alpha_{\infty})\), we have \[\sup_{0\leq t\leq T}\E[\cW_{2}(\mu_{t}^{n},\mu_{t})^{2p}+\cW_{2}(\cL(X^{n}_{t}),\mu_{t})^{2p}]\leq C\me^{CT}\alpha_{n}^{\gamma};\]
    \item if \(\gamma^{-1}\leq \overline{\alpha}<2-\alpha_{\infty}\), we have, for any \(\delta<1\wedge\underline{\alpha}\gamma\), \[\sup_{0\leq t\leq T}\E[\cW_{2}(\mu_{t}^{n},\mu_{t})^{2p}+\cW_{2}(\cL(X^{n}_{t}),\mu_{t})^{2p}]\leq C\me^{CT}\prod_{i=1}^{n}(1-\delta\alpha_{i});\]
    \item if \(\overline{\alpha}\geq 2\), we have, for any \(\delta<1\wedge\underline{\alpha}\gamma \wedge 2\gamma\),
    \[\sup_{0\leq t\leq T}\E[\cW_{2}(\mu_{t}^{n},\mu_{t})^{2p}+\cW_{2}(\cL(X^{n}_{t}),\mu_{t})^{2p}]\leq C\me^{CT}\prod_{i=1}^{n}(1-\delta\alpha_{i}).\]
\end{enumerate}

\section{Proof of Theorem \ref{thm:time uniform-1}\label{sec:5}}

Recall the function $f\in C^{2}[0,\infty)$ with $f(0)=0$ and
\[
f^{\prime}(r)=\frac{1}{2}\int_{r}^{\infty}s\cdot\exp\bigg(-\frac{1}{2}\int_{r}^{s}\tau\kappa(\tau)\md\tau\bigg)\md s,
\]
which actually satisfies the ordinary differential equation (ODE):
\begin{equation}
2f^{\prime\prime}(r)-r\kappa(r)f^{\prime}(r)=-r.\label{eq:f''-rkf' =00003D -r}
\end{equation}

\begin{lem}
\label{lem:f_concave}$f$ is concave; consequently, $\kappa_{\infty}^{-1}\le f(r)/r\le f'(0)$
for all $r>0$.
\end{lem}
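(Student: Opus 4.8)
The plan is to work directly from the ODE \eqref{eq:f''-rkf' =00003D -r}, which rearranges to $f''(r)=\tfrac{r}{2}\bigl(\kappa(r)f'(r)-1\bigr)$. Thus concavity on $(0,\infty)$ is equivalent to the pointwise bound $\kappa(r)f'(r)\le 1$, while at $r=0$ the factor $r$ already gives $f''(0)=0$. Since $f'(r)=\tfrac12\int_r^\infty s\exp\!\bigl(-g(r,s)\bigr)\md s$ with $g(r,s):=\tfrac12\int_r^s\tau\kappa(\tau)\md\tau$ and the exponential is positive, we have $f'(r)>0$ for every $r\ge0$. Consequently, whenever $\kappa(r)\le 0$ the inequality $\kappa(r)f'(r)\le 0<1$ is automatic, and the only case requiring work is $\kappa(r)>0$.

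For $\kappa(r)>0$ I would exploit the monotonicity of $\kappa$: for $\tau\ge r$ we have $\kappa(\tau)\ge\kappa(r)>0$, hence $g(r,s)\ge\tfrac{\kappa(r)}{4}(s^2-r^2)$ for all $s\ge r$. Substituting this bound and using $\int_r^\infty s\,\me^{-as^2}\md s=\tfrac{1}{2a}\me^{-ar^2}$ with $a=\kappa(r)/4$ gives
\[
f'(r)\le\frac12\,\me^{\kappa(r)r^2/4}\int_r^\infty s\,\me^{-\kappa(r)s^2/4}\md s=\frac12\,\me^{\kappa(r)r^2/4}\cdot\frac{2}{\kappa(r)}\,\me^{-\kappa(r)r^2/4}=\frac{1}{\kappa(r)},
\]
so $\kappa(r)f'(r)\le 1$; combined with the previous paragraph, $f''\le 0$ on all of $[0,\infty)$ and $f$ is concave.

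The "consequently" part then follows with little extra effort. Concavity together with $f(0)=0$ makes $f'$ nonincreasing, so $f'(r)\le f'(0)$ and therefore $f(r)=\int_0^r f'(s)\md s\le r\,f'(0)$, i.e.\ $f(r)/r\le f'(0)$. For the lower bound I would run the same Gaussian computation in the opposite direction: since $\kappa$ is increasing with limit $\kappa_\infty$ we have $\kappa(\tau)\le\kappa_\infty$ for every $\tau>0$, hence $g(r,s)\le\tfrac{\kappa_\infty}{4}(s^2-r^2)$ and
\[
f'(r)\ge\frac12\int_r^\infty s\,\me^{-\kappa_\infty(s^2-r^2)/4}\md s=\frac{1}{\kappa_\infty}
\]
(trivially valid when $\kappa_\infty=\infty$, since then the right-hand side is $0\le f'(r)$); integrating from $0$ to $r$ yields $f(r)\ge r/\kappa_\infty$.

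The only genuinely delicate point is the preliminary one, not visible in the final inequalities: one must know a priori that $f$ is well defined and $C^2$, i.e.\ that the defining integral for $f'$ converges and may be differentiated under the integral sign to produce \eqref{eq:f''-rkf' =00003D -r}. This is exactly where the standing hypotheses $\kappa_\infty>0$ (so $\tau\kappa(\tau)\gtrsim\tau$ for large $\tau$, forcing Gaussian-type decay of the integrand) and $\lim_{r\to0+}r\kappa(r)=0$ (controlling the inner integral near the origin) enter; once this is granted, the rest of the argument is the short comparison computation above.
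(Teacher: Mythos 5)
Your proposal is correct and takes essentially the same approach as the paper: concavity is reduced via the ODE to the pointwise bound $\kappa(r)f'(r)\le 1$, established from the monotonicity of $\kappa$, and the two-sided estimate for $f(r)/r$ follows from concavity and the constant-$\kappa_\infty$ comparison. The only cosmetic difference is that you close the bound $f'(r)\le 1/\kappa(r)$ by evaluating an explicit Gaussian integral, whereas the paper instead factors out $1/\kappa(r)$ and recognizes $s\kappa(s)\me^{-\frac12\int_r^s\tau\kappa(\tau)\md\tau}$ as a total derivative.
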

\begin{proof}
Since $\kappa_{\infty}>0$, there exists $R=\inf\{r>0:\kappa(s)>0,\forall s>r\}$.
Then for $r>R$,
\begin{align*}
f^{\prime}(r) & =\frac{1}{2}\int_{r}^{\infty}s\me^{-\frac{1}{2}\int_{r}^{s}\tau\kappa(\tau)\md\tau}\md s\\
 & \le\frac{1}{2\kappa(r)}\int_{r}^{\infty}s\kappa(s)\me^{-\frac{1}{2}\int_{r}^{s}\tau\kappa(\tau)\md\tau}\md s\\
 & =\frac{1}{\kappa(r)}\big(1-\me^{-\frac{1}{2}\int_{r}^{\infty}\tau\kappa(\tau)\md\tau}\big)\le\frac{1}{\kappa(r)},
\end{align*}
which along with \eqref{eq:f''-rkf' =00003D -r} implies $f''(r)\le0$;
for $r\le R$, we have $f^{\prime}(r)\kappa(r)\le0$. 
This implies
\begin{equation}
f^{\prime\prime}(r)=f^{\prime}(r)\kappa(r)r-r\le0.\label{eq:f}
\end{equation}
Hence, $f$ is concave. Consequently, $f'(r)\le f'(0)$. Moreover,
\begin{align*}
f^{\prime}(r) & \ge\frac{1}{2}\int_{r}^{+\infty}s\me^{-\frac{1}{2}\int_{r}^{s}\tau\kappa_{\infty}\md\tau}\md s=\frac{1}{\kappa_{\infty}}.
\end{align*}
This concludes the proof.
\end{proof}

Now we turn to the proof of Theorem \ref{thm:time uniform-1}.
Recall that for \(n\geq 2\) we recursively define
\begin{equation*}
    \left\{
\begin{aligned}\md X_{t}^{n} & =b(t,X_{t}^{n},\mu_{t}^{n-1})\md t+\sigma(t,X_{t}^{n})\md W_{t}^{n}+\md B_{t}^{n},\\
\mu_{t}^{n} & =\mu_{t}^{n-1}+\alpha_{n}(\delta_{X_{t}^{n}}-\mu_{t}^{n-1}), 
\end{aligned}\right.
\end{equation*}
with \(X^{1}_{t}\equiv X^{1}_{0}\), \(\mu_{t}^{1}=\delta_{X_{t}^{1}}\), and its corresponding McKean--Vlasov process 
\begin{equation*}
    \left\{\begin{aligned}
        \md Y_{t}^{n}&=b(t,Y_{t}^{n},\mu_{t})\md t+\sigma(t,Y_{t}^{n})\md W_{t}^{n}+\md B_{t}^{n},\\
        \mu_{t}&=\cL(Y_{t}),
    \end{aligned}\right.
\end{equation*}
with \( Y_{0}^{n}=X_{0}^{n}\).
Following the idea of \cite{Reflectioncouplings,uniform},  we take smooth functions $\lambda^{n},\pi^{n}:\bR^{d}\to\bR$ satisfying
\begin{equation}
    \label{eqn-lp}
|\lambda^{n}(x)|^{2}+|\pi^{n}(x)|^{2}=1\text{ for all \ensuremath{x\in\bR^{d}}},\qquad\lambda^{n}(x)=\begin{cases}
1 & \text{if }|x|\geq\delta_{n},\\
0 & \text{if }|x|\leq\delta_{n}/2.
\end{cases}
\end{equation}
Here a positive sequence $\{\delta_{n}\}_{n=1}^{\infty}$ will be specified later.
Instead of the synchronous coupling \((X^{n},Y^{n})\), we define a coupling of reflection \((\widehat{X}^{n},\widehat{Y}^{n})\)  as
\[
    \left\{
\begin{aligned}\md \widehat{X}_{t}^{n} & =b(t,\widehat{X}_{t}^{n},\widehat{\mu}_{t}^{n-1})\md t+\sigma(t,\widehat{X}_{t}^{n})\md W_{t}^{n} +\lambda^{n}(\Delta_{t}^{n})\md B_{t}^{n}+\pi^{n}(\Delta_{t}^{n})\md\widehat{B}_{t}^{n},\\
\widehat{\mu}_{t}^{n} & =\widehat{\mu}_{t}^{n-1}+\alpha_{n}(\delta_{\widehat{X}_{t}^{n}}-\widehat{\mu}_{t}^{n-1});\\
\end{aligned}\right.
\]
and 
\[\left\{
\begin{aligned}
    \md \widehat{Y}_{t}^{n} & =b(t,\widehat{Y}_{t}^{n},\mu_{t})\md t+\sigma(t,\widehat{Y}_{t}^{n})\md W_{t}^{n} +\lambda^{n}(\Delta_{t}^{n})\big(\id-2e_{t}^{n}(e_{t}^{n})^{\tr}\big)\md B_{t}^{n}+\pi^{n}(\Delta_{t}^{n})\md\widehat{B}_{t}^{n},\\
    \mu_{t}&=\cL(Y_{t}),
\end{aligned}    \right.
\]
where   $\{\widehat{B}^{n}\}$ is an additional sequence of independent Brownian motions which is independent of \(\{B^{n},W^{n},X^{n}_{0}\}\), $\Delta_{t}^{n}=\widehat{X}_{t}^{n}-\widehat{Y}_{t}^{n}$, and 
\begin{equation*}
e_{t}^{n}=\begin{cases}
\Delta_{t}^{n}/|\Delta_{t}^{n}| & \text{if }\Delta_{t}^{n}\neq0,\\
0 & \text{if }\Delta_{t}^{n}=0.
\end{cases}
\end{equation*}
We set initial data \(\widehat{X}^{n}_{0}=\widehat{Y}^{n}_{0}=X^{n}_{0}=Y^{n}_{0}\) and \(\widehat{\mu}_{t}^{1}=\delta_{\widehat{X}^{1}_{t}}\).
By Levy's characterization and \eqref{eqn-lp}, we can show \((\widehat{X}^{1},\dots,\widehat{X}^{n})\) shares the same joint law with \((X^{1},\dots,X^{n})\) by induction.
Thus, \(\cL(\widehat{Y}^{n}_{t})=\cL(Y^{n}_{t})=\mu_{t}\) and \(\widehat{\mu}^{n}_{t}\) have the same law with \(\mu^{n}_{t}\) as random variables on \(\sP(\bR^{d})\).

We compute that
\begin{equation*}
\begin{aligned}\md\Delta_{t}^{n} & =\md(\widehat{X}_{s}^{n}-\widehat{Y}_{t}^{n})\\
 & =[b(t,\widehat{X}_{t}^{n},\widehat{\mu}_{t}^{n-1})-b(t,\widehat{Y}_{t}^{n},\mu_{t})]\md t\\
 &\quad+\bigl[\sigma(t,\widehat{X}_{t}^{n})-\sigma(t,\widehat{Y}_{t}^{n})\bigr]\md W_{t}^{n} +2\lambda^{n}(\Delta_{t}^{n})e_{t}^{n}(e_{t}^{n})^{\tr}\md B_{t}^{n}\\
 & =[b(t,\widehat{X}_{t}^{n},\widehat{\mu}_{t}^{n-1})-b(t,\widehat{Y}_{t}^{n},\widehat{\mu}_{t}^{n-1})]\md t+[b(t,\widehat{Y}_{t}^{n},\widehat{\mu}_{t}^{n-1})-b(t,\widehat{Y}_{t}^{n},\mu_{t})]\md t\\
 & \quad+[\sigma(t,\widehat{X}_{t}^{n})-\sigma(t,\widehat{Y}_{t}^{n})]\md W_{t}^{n}+2\lambda^{n}(\Delta_{t}^{n})e_{t}^{n}(e_{t}^{n})^{\tr}\md B_{t}^{n}\\
 & :=(\Delta b_{t}^{n,1}+\Delta b_{t}^{n,2})\md t+\Delta \sigma_{t}^{n}\md W_{t}^{n}+2\lambda^{n}(\Delta_{t}^{n})e_{t}^{n}(e_{t}^{n})^{\tr}\md B_{t}^{n}.
\end{aligned}
\label{eq:Delta_n}
\end{equation*}
By the It\^o--Tanaka formula, we have
\begin{equation}
    \label{eqn-ab}
\begin{aligned}
\md|\Delta_{t}^{n}| =\frac{1}{2|\Delta_{t}^{n}|}\big(2\langle \Delta_{t}^{n},\Delta b_{t}^{n,1} +\Delta b_{t}^{n.2}\rangle&+\|\Delta \sigma_{t}^{n}\|^{2}\big)\md t \\
&+\left\langle e_{t}^{n},\Delta \sigma_{t}^{n}\right\rangle \md W_{t}^{n}+2\lambda^{n}(\Delta_{t}^{n})(e_{t}^{n})^{\tr}\md B_{t}^{n}.
\end{aligned}
\end{equation}
By Assumption \ref{assu:uniform_H1}, we obtain 
\begin{align*}
\langle\Delta_{t}^{n},\Delta b_{t}^{n,1}\rangle+\frac{1}{2}\|\Delta\sigma_{t}^{n}\|^{2} & \le-\kappa(|\Delta_{t}^{n}|)|\Delta_{t}^{n}|^{2}.
\end{align*}
By Lemma \ref{lemma:iid} and Assumption \ref{assu:uniform_H2},  we can further get
\begin{align*}
    |\Delta b^{n,2}_{t}|&\leq \eta \cW_{1}(\widehat{\mu}^{n-1}_{t},\mu_{t})=\eta \cW_{1}(\cK_{n-1}(\delta_{\widehat{X}_{t}^{i}}),\mu_{t})\\
    &\leq\eta \cW_{1}(\cK_{n-1}(\delta_{\widehat{X}_{t}^{i}}),\cK_{n-1}(\delta_{\widehat{Y}_{t}^{i}}))+\eta\cW_{1}(\cK_{n-1}(\delta_{\widehat{Y}_{t}^{i}}),\mu_{t})\\
    &\leq \eta\cK_{n-1}(|\Delta_{t}^{i}|)+C \eta \theta_{n}^{\frac{1}{d+2}}.
\end{align*}
Plugging above estimates into \eqref{eqn-ab}, we have 
\begin{equation*}
    \begin{aligned}
    \md|\Delta_{t}^{n}|\leq -\kappa(|\Delta_{t}^{n}|)|\Delta^{n}_{t}|\md t &+ (\eta\cK_{n-1}(|\Delta_{t}^{i}|)+C \eta \theta_{n}^{\frac{1}{d+2}})\md t\\
    &+\left\langle e_{t}^{n},\Delta \sigma_{t}^{n}\right\rangle \md W_{t}^{n}+2\lambda^{n}(\Delta_{t}^{n})(e_{t}^{n})^{\tr}\md B_{t}^{n}.
    \end{aligned}
\end{equation*}
Since \(f''\leq 0\), by It\^o's formula we show
\begin{align*}
\md f(|\Delta_{t}^{n}|) & \le-\kappa(|\Delta_{t}^{n}|)|\Delta_{t}^{n}|f^{\prime}(|\Delta_{t}^{n}|)\md t+2f^{\prime\prime}(|\Delta_{t}^{n}|)\vert\lambda^{n}(\Delta_{t}^{n})\vert^{2}\md t\\
 & \quad+f'(|\Delta_{t}^{n}|)(\eta\mathcal{K}_{n-1}(|\Delta_{t}^{i}|)+C \eta \theta_{n}^{\frac{1}{d+2}})\md t\\
 & \quad+f'(|\Delta_{t}^{n}|)\left\langle e_{t}^{n},\Delta\sigma_{t}^{n}\right\rangle \md W_{t}^{n}+2f'(|\Delta_{t}^{n}|)\lambda^{n}(\Delta_{t}^{n})(e_{t}^{n})^{\tr}\md B_{t}^{n},
\end{align*}
where the  function $f$ is defined at the beginning of this section.
We write \[\rho(r)=\sup_{x\in[0,r]}x\kappa^{-}(x),\] 
then by \eqref{eq:f} we have
\begin{align*}
 & \quad-\kappa(|\Delta_{t}^{n}|)|\Delta_{t}^{n}|f^{\prime}(|\Delta_{t}^{n}|)+2f^{\prime\prime}(|\Delta_{t}^{n}|)\vert\lambda^{n}(\Delta_{t}^{n})\vert^{2}\\
 & \le-|\Delta_{t}^{n}|\vert\lambda^{n}(\Delta_{t}^{n})\vert^{2}-\kappa(|\Delta_{t}^{n}|)|\Delta_{t}^{n}|\vert\pi^{n}(\Delta_{t}^{n})\vert^{2}\\
 & \le-|\Delta_{t}^{n}|+\rho(\delta_{n})+\delta_{n}.
\end{align*}
It follows from taking the expectation  that
\begin{align*}
\frac{\md}{\md t}\bE f(|\Delta_{t}^{n}|) & \le-\bE|\Delta_{t}^{n}|+ \rho(\delta_{n})+\delta_{n}+\eta f'(0)(\mathcal{K}_{n-1}(\E|\Delta_{t}^{i}|)+ C\theta_{n}^{\frac{1}{d+2}}).\label{eq:501}
\end{align*}
Since \(\lim_{r\to 0}\rho(r)=0\), we take \(\delta_{n}\) such that \(\delta_{n}+\rho(\delta_{n})\leq \theta_{n}^{\frac{1}{d+2}}\) and \(0<\varepsilon< f'(0)^{-1}-\eta\).
Thus, 
\begin{align*}
\frac{\md}{\md t}\bE f(|\Delta_{t}^{n}|) & \le-\bE|\Delta_{t}^{n}|+\eta f'(0)\mathcal{K}_{n-1}(\E|\Delta_{t}^{i}|)+ C\theta_{n}^{\frac{1}{d+2}}\\
&\leq -\varepsilon \E f(|\Delta_{t}^{n}|)-(1-\varepsilon f'(0))\E|\Delta_{t}^{n}| +\eta f'(0)\mathcal{K}_{n-1}(\E|\Delta_{t}^{i}|)+ C\theta_{n}^{\frac{1}{d+2}}.
\end{align*}
By Gronwall's inequality, we get 
\begin{equation}
    \label{eqn-uniform}
0\leq \E f(|\Delta^{n}_{t}|)\leq \me^{-\varepsilon t} \int_{0}^{t}\me^{\varepsilon s}[(-1+\varepsilon f'(0))\E|\Delta_{s}^{n}|+\eta f'(0) \mathcal{K}_{n-1}(\E|\Delta_{s}^{i}|)]\md s +C\theta_{n}^{\frac{1}{d+2}}.
\end{equation}
This implies 
\begin{equation*}
    \int_{0}^{t}\me^{s}\E|\Delta_{s}^{n}|\md s\leq \frac{\eta f'(0)}{1-\varepsilon f'(0)} \int_{0}^{t}\me^{\varepsilon s}\cK_{n-1}(\E|\Delta_{s}^{i}|)\md s + C\me^{\varepsilon t}\theta_{n}^{\frac{1}{d+2}}.
\end{equation*}
We denote 
\begin{equation*}
    s_{n}=\int_{0}^{t}\me^{\varepsilon s}\cK_{n}(\E|\Delta_{s}^{i}|)\md s \quad\text{and}\quad \delta= 1-\frac{\eta f'(0)}{1-\varepsilon f'(0)}\in(0,1).
\end{equation*}
Then, we have 
\begin{align*}
    s_{n}&=s_{n-1}+\alpha_{n}\Bigl( \int_{0}^{t}\me^{s}\E|\Delta_{s}^{n}|\md s-s_{n-1}\Bigr)\\
    &\leq (1-\delta \alpha_{n-1})s_{n-1}+\alpha_{n}C\me^{\varepsilon t}\theta_{n}^{\frac{1}{d+2}}.
\end{align*}

If \(\overline{\alpha}<2-\alpha_{\infty}\), then by Corollary \ref{cor:weight} we have \(\theta_{n}\leq C \alpha_{n}\).
We take \(B_{n}=C\me^{\varepsilon t}\alpha_{n}^{\frac{1}{d+2}}\), and calculate
\begin{equation*}
    \beta_{n}=  \limsup_{n\to\infty}\frac{\alpha_{n-1}^{1/(d+2)}-\alpha_{n}^{1/(d+2)}}{ \alpha_{n}^{1/(d+2)}}.
\end{equation*}
Then we have
\begin{equation*}
    \limsup_{n\to\infty}\frac{\alpha_{n-1}^{1/(d+2)}-\alpha_{n}^{1/(d+2)}}{ \alpha_{n}^{(d+3)/(d+2)}}=\frac{\overline{\alpha}}{d+2}<1,
\end{equation*}
and by Lemma \ref{lem-gw} we derive \(s_{n}\leq C \me^{\varepsilon t}\alpha_{n}^{\frac{1}{d+2}}\).

In the case of \(\overline{\alpha}\geq 2\), for any \(0<\lambda<2\wedge \underline{\alpha}\), we have 
\begin{equation*}
    \theta_{n}\leq C\prod_{i=1}^{n}(1-\lambda \alpha_{i}\mathbbm{1}_{\{\lambda \alpha_{i}<1\}}).
\end{equation*} 
Set \(B_{n}=\prod_{i=1}^{n-1}(1- \lambda\alpha_{i}\mathbbm{1}_{\{\lambda\alpha_{i}<1\}})^{1/(d+2)}\), then we derive 
\begin{equation*}
    \beta_{n}=\frac{1}{\delta}[1-(1-\lambda \alpha_{n-1}\mathbbm{1}_{\{\lambda\alpha_{n-1}<1\}})^{1/(d+2)}].
\end{equation*}
Thus, we compute 
\begin{equation*}
    \lim_{n\to\infty}\frac{\beta_{n+1}}{\alpha_{n}}=\lim_{n\to\infty} \frac{1-(1-\lambda\alpha_{n})^{\frac{1}{d+2}}}{\delta\alpha_{n}}=\frac{\lambda}{\delta(d+2)}.
\end{equation*}
By Lemma \ref{lem-gw}, we obtain, for any \(\delta<1\wedge\frac{\underline{\alpha}}{d+2} \wedge \frac{2}{d+2}\),
\begin{equation*}
    s_{n}\leq C\me^{\varepsilon t}\prod_{i=1}^{n}(1-\delta\alpha_{i}).
\end{equation*}

Plugging above estimates into \eqref{eqn-uniform}, we get the uniform estimates of \(\E f(|\Delta_{t}^{n}|)\) as well as \(\E[\cW_{f}(\cL(X_{t}^{n}),\mu_{t})]\).
The estimate of \(\E[\cW_{f}(\mu_{t}^{n},\mu_{t})]\) follows from the triangle inequality 
\begin{align*}
    \cW_{f}(\widehat{\mu}_{t}^{n},\mu_{t})&\leq \cW_{f}(\widehat{\mu}_{t}^{n},\cK_{n}(\delta_{\widehat{Y}_{t}^{i}})) +\cW_{f}(\cK_{n}(\delta_{\widehat{Y}_{t}^{i}}),\mu_{t})\\
    &\leq \cK_{n}(f(\Delta_{t}^{n}|))+f'(0)\cW_{1}(\cK_{n}(\delta_{\widehat{Y}_{t}^{i}}),\mu_{t}).
\end{align*}
To sum up, we  obtain:
\begin{enumerate}
    \item if \(\overline{\alpha}<2-\alpha_{\infty} \), we have \[\sup_{t\geq 0}\E[\cW_{f}(\mu_{t}^{n},\mu_{t})+\cW_{f}(\cL(X^{n}_{t}),\mu_{t})]\leq C\alpha_{n}^{\frac{1}{d+2}};\]
    \item if \(\overline{\alpha}\geq 2\), we have, for any \(\delta<1\wedge\frac{\underline{\alpha}}{d+2} \wedge \frac{2}{d+2}\),
    \[\sup_{t\geq 0}\E[\cW_{f}(\mu_{t}^{n},\mu_{t})+\cW_{f}(\cL(X^{n}_{t}),\mu_{t})]\leq C\prod_{i=1}^{n}(1-\delta\alpha_{i}).\]
\end{enumerate}

\begin{rem}
\label{rem:contrac}Let $X,Y$ be solutions of \eqref{eq:McKean--Vlasov-1}
with different initial data. 
Then by a similar argument on the reflection coupling \((\widehat{X},\widehat{Y})\), one can obtain that
\begin{align*}
\frac{\md}{\md t}\bE f(|\widehat{X}_{t}-\widehat{Y}_{t}|) & \le-\bE|\widehat{X}_{t}-\widehat{Y}_{t}|+c^{-1}\eta\cW_{1}(\cL(\widehat{X}_{t}),\cL(\widehat{Y}_{t}))\\
 & \le-(1-c^{-1}\eta)\bE|\widehat{X}_{t}-\widehat{Y}_{t}|\le-(c-\eta)\bE f(|\widehat{X}_{t}-\widehat{Y}_{t}|),
\end{align*}
which implies that
\[
\cW_{f}(\cL(X_{t}),\cL(Y_{t}))\le\me^{-(c-\eta)t}\cW_{f}(\cL(X_{0}),\cL(Y_{0})),
\]
where $c=1/f'(0)$.
\end{rem}

\section{Numerical experiments\label{sec:6}}

In this section, we shall illustrate our results of the SPoC algorithm with different numerical examples. 
We discretize the time interval $[0,T]$ into \(\{t_{0},\dots,t_{M}\}\), where  $t_{m}=m\Delta t$ and \(\Delta t= T/M\).
We give the pseudocode in Algorithm \ref{algorithm}. 

\begin{algorithm}[htbp]
  \caption{\label{algorithm}Framework of SPoC algorithm} 
  \begin{algorithmic}[1]
    \Require
      The initial distribution \(\mu_{0}\), the update rate \(\{\alpha_{n}\}\);
    \State Initialize \(n=1\), $X_{0}^{1}\sim\mu_{0}$;
    \For{\(m=0\) \textbf{to} \(M\)}
    \State \(\mu_{t_{m}}^{1}=\delta_{X_{0}^{1}}\);
    \EndFor
    \Repeat
        \State \(n=n+1\);
        \State \(X_{0}^{n}\sim \mu_{0}\);
        \State \(\mu_{0}^{n}=\mu_{0}^{n-1}+\alpha_{n}(\delta_{X_{0}^{n}}-\mu_{0}^{n-1})\);
         \For{$m = 1$ \textbf{to} \(M\)}
         \State Generate a path of $X^{n}$ by 

        \State $\quad\quad$ $X_{t_{m}}^{n}=X_{t_{m-1}}^{n}+b(t,X_{t_{m-1}}^{n},\mu_{t_{m-1}}^{n-1})\Delta t+\sigma(t,X_{t_{m-1}}^{n},\mu_{t_{m-1}}^{n-1})\Delta W_{t}^{n};$

         \State Update the empirical measure by

        \State $\quad\quad$ $\mu_{t_{m}}^{n}=\mu_{t_{m}}^{n-1}+\alpha_{n}(\delta_{X_{t_{m}}^{n}}-\mu_{t_{m}}^{n-1});$
        \State $m=m+1$;
      
     \EndFor
         \Until{the end condition.}
  \end{algorithmic}
\end{algorithm}

\begin{algorithm}[htbp]
  \caption{\label{algorithm2}Batch-by-batch SPoC algorithm} 
  \begin{algorithmic}[1]
    \Require
    The initial distribution \(\mu_{0}\), the update rate \(\{\alpha_{n}\}\), the bath size \(\{N_{n}\}\);
   
       \State Initialize \(n=1\), and the~first~batch~$X_{0}^{1,1},...,X_{0}^{N_{1},1}\ensuremath{\stackrel{\mathrm{i.i.d.}}{\sim}}\mu_{0}$;
       \For{\(m=0\) \textbf{to} \(M\)}
       \State \(\mu_{t_{m}}=\frac{1}{N_{1}}\sum_{i=1}^{N_{1}}\delta_{X_{0}^{i,1}}\);
        \EndFor
       \Repeat
       \State \(n=n+1\);
        \State Sample  $X_{0}^{1,n},...,X_{0}^{N_{n},n}\ensuremath{\stackrel{\mathrm{i.i.d.}}{\sim}}\mu_{0}$;
         \For{$m = 1$ \textbf{to} $M$}
         \For{$i = 1$ \textbf{to} $N_n$}
         \State Generate a path of  $X^{i,n}$ by 
        \State $\quad\quad$ $X_{t_{m}}^{i,n}=X_{t_{m-1}}^{i,n}+b(t,X_{t_{m-1}}^{i,n},\mu_{t_{m-1}}^{n})\Delta t+\sigma(t,X_{t_{m-1}}^{i,n},\mu_{t_{m-1}}^{n-1})\Delta W_{t}^{i,n};$
             \EndFor
      \State Update the empirical measure by
        \State $\quad\quad$ $\mu_{t_{m}}^{n}=\mu_{t_{m}}^{n-1}+\alpha_{n}(\frac{1}{N_n}\sum_{i=1}^{N_{n}}\delta_{X_{t_{m}}^{i,n}}-\mu_{t_{m}}^{n-1});$
     \EndFor
    \Until{the end condition.}
  \end{algorithmic}
\end{algorithm}

For the SPoC algorithm, the total computational complexity of the first $N$ particles is $\mathcal{O}(N^{2}M)$. 
For the $n$-th particle, it interacts with all the previous particles.
At each time step \(t_{m}\), it takes \(n\) calculations to update the empirical measure \(\mu_{t_{m}}^{n}\).
This accounts for a total \(\mathcal{O}(nM)\) complexity of the \(n\)-th particle.
Thus, the total computational complexity of the first $N$ particles is $\mathcal{O}(N^{2}M)$. 
In particular, if the interaction is in the form of \(b(t,X_{t},\E X_{t})\), then the complexity reduces to $\mathcal{O}(NM)$.
It is because we only need to spend $\mathcal{O}(1)$ time to update $\bE X_{t_{m}}$ for each particle.
More generally, we can add the particles batch-by-batch rather than
particle-by-particle: we add $N_{k}$ particles $\{X^{i,k}:i=1,\dots,N_{k}\}$
in the $k$-th batch, and update the empirical measure from the batch. 
The batch model could be more efficient in practical, see Algorithm \ref{algorithm2} for the pseudocode.

We conclude with a comprehensive discussion on the differences between the SPoC algorithm and the classical PoC algorithm. 
For the classical PoC algorithm, we decide the number of the particles
$N$ beforehand and simulate all the paths of $N$ particles $X^{1,N},\dots,X^{N,N}$ simultaneously.

In order to get the empirical measure corresponding to the terminal time $T$, we have to calculate the whole system until the last time step. 
While in the SPoC algorithm, we simulate the whole path of a single particle at once, and then sequentially increase the number of particles.
Given the paths of \(X^{1},\dots,X^{N-1}\), we can generate the path of \(X^{N}\). 
We have already obtained an approximate distribution $\mu_{T}^{n}=\frac{1}{n}\sum_{i=1}^{n}\delta_{X_{T}^{i}}$ at the intermediate step of the calculation. 
Moreover, as the number of simulated particles increases, the empirical measure converges to the true distribution.
While in the classical PoC algorithm, we need to recalculate the whole system to improve the approximation accuracy.
In Figure \ref{fig:Comparison} we compare the structures of these two algorithms.

\begin{figure}[htbp]
\includegraphics[width=\textwidth]{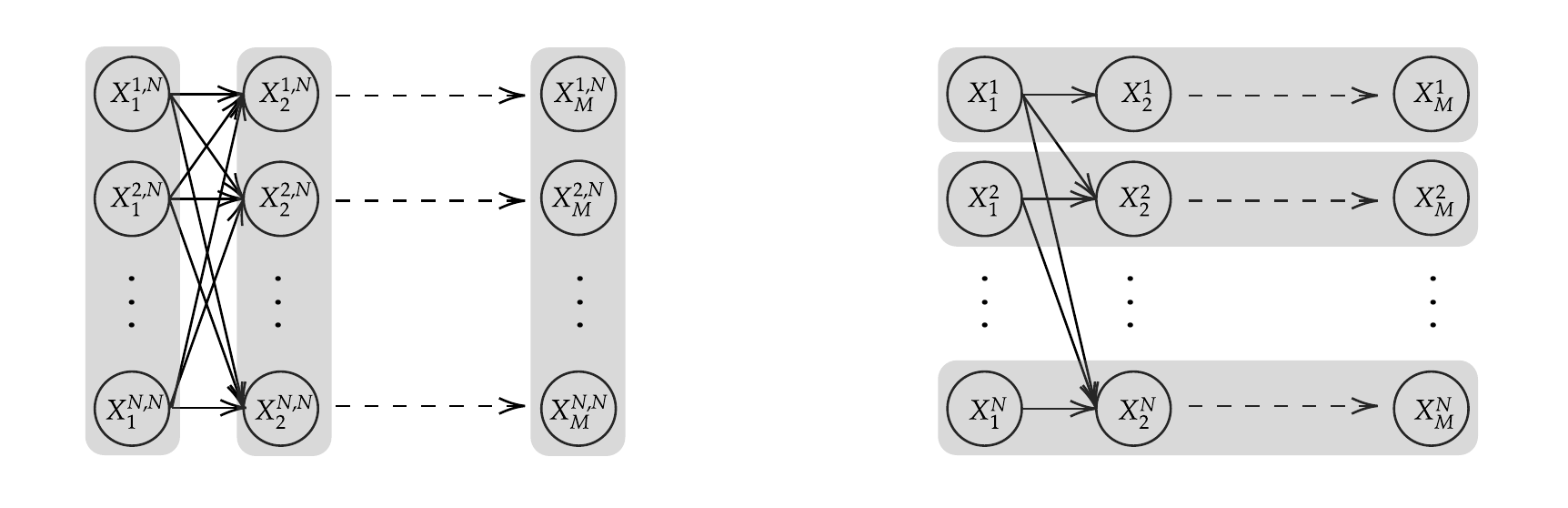}

\caption{\label{fig:Comparison}Comparison of the classical PoC (left) and the sequential PoC (right).}
\end{figure}

In the following we show the performance of the SPoC algorithm with some examples.
\subsubsection*{Example 1}

We consider the mean-field Ornstein--Uhlenbeck process:
\begin{equation}
\md X_{t}=(-2X_{t}-\bE X_{t})\md t+(2-\sqrt{\bE|X_{t}|^{2}})\md W_{t}.\label{eq:ex-ou}
\end{equation}
Both the drift and diffusion terms are distribution dependent.
And it is straightforward to check that \eqref{eq:ex-ou} has
a unique invariant measure $\mu^{*}\sim\mathcal{N}(0,4/9)$.
In our numerical experiments, we take $X_{0}^{n}=1$, $T=1$ and divide
the interval $[0,T]$ uniformly into $M=30$ parts.
We sample $10^{6}$ particles and repeat the simulation independently
$100$ times. 
We record the mean and the second moment of the empirical measures. 
In Figure \ref{fig:mean-field-Ornstein=002013Uhlenbeck},
we compare the convergence of the mean (left) and  the second
moment (right) of the SPoC and the classical PoC.
We plot the $90$\% confidence interval by the shaded area to show that the SPoC algorithm has almost the same convergence performance
as the classical PoC.

\begin{figure}[htbp]
    \includegraphics[width=0.5\textwidth]{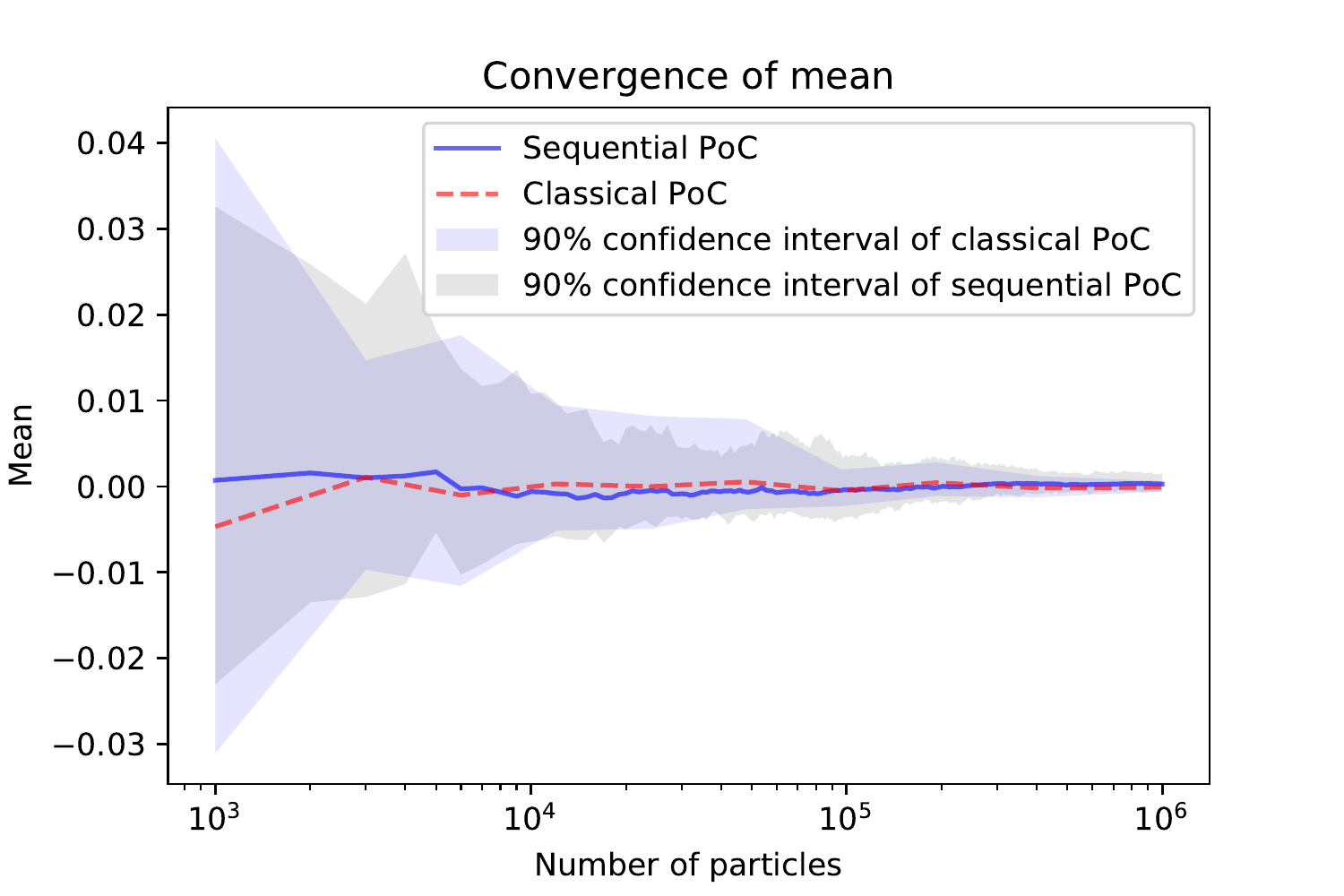}\includegraphics[width=0.5\textwidth]{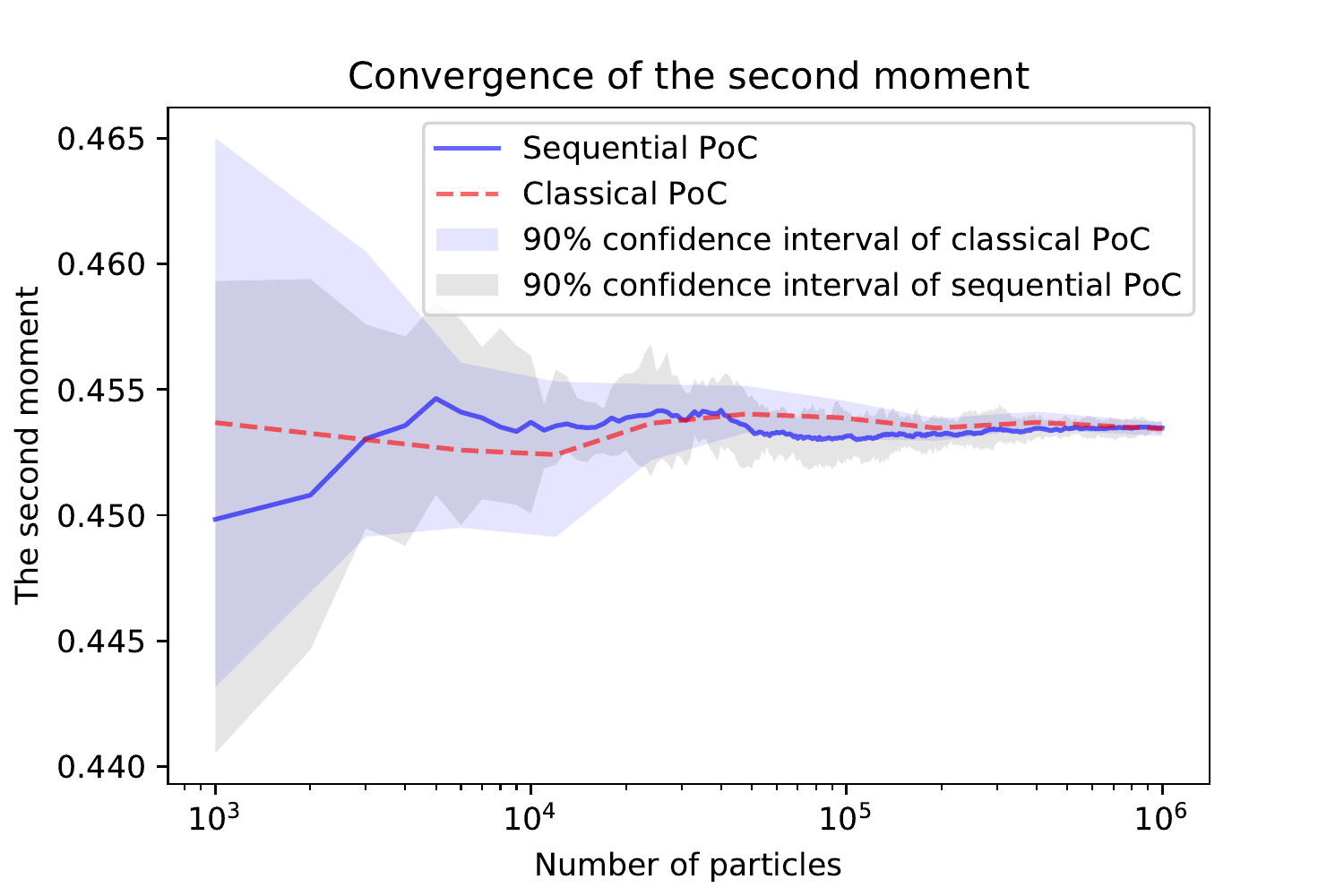}
    
    \caption{\label{fig:mean-field-Ornstein=002013Uhlenbeck}Mean-field Ornstein--Uhlenbeck
    model.}
    
    \end{figure}
    
\subsubsection*{Example 2}

Consider a multidimensional nonlinear system:
\begin{equation}
\md X_{s}=\big(-\alpha X_{s}+\frac{\vec{e}}{\beta+\Vert X_{s}-\bE X_{s}\Vert^{2}}\big)\md s+\sigma\md B_{s},\label{eq:example2}
\end{equation}
where $X_{s}\in\bR^{3}$ and $\vec{e}=\frac{X_{s}-\bE X_{s}}{\Vert X_{s}-\bE X_{s}\Vert}$
is a unit vector. 
We set $\alpha=0.1$, $\beta=0.1$, $\sigma=1$, $X_{0}\sim\mathcal{N}(0,1)$, $T=1$.
The damping term $-\alpha X_{s}$ will drag each particle toward the origin, while the interaction term is a repulsive force  pushing each particle away from the mean value. 
We divide the interval $[0,T]$ uniformly into $M=20$ parts and sample $N=10^{6}$ particles. 
Figure \ref{fig:3D-model} shows the convergence of the  mean (left) and the second moment
(right) of the SPoC and the classical PoC algorithms. 
\begin{figure}[htbp]
\includegraphics[width=0.5\textwidth]{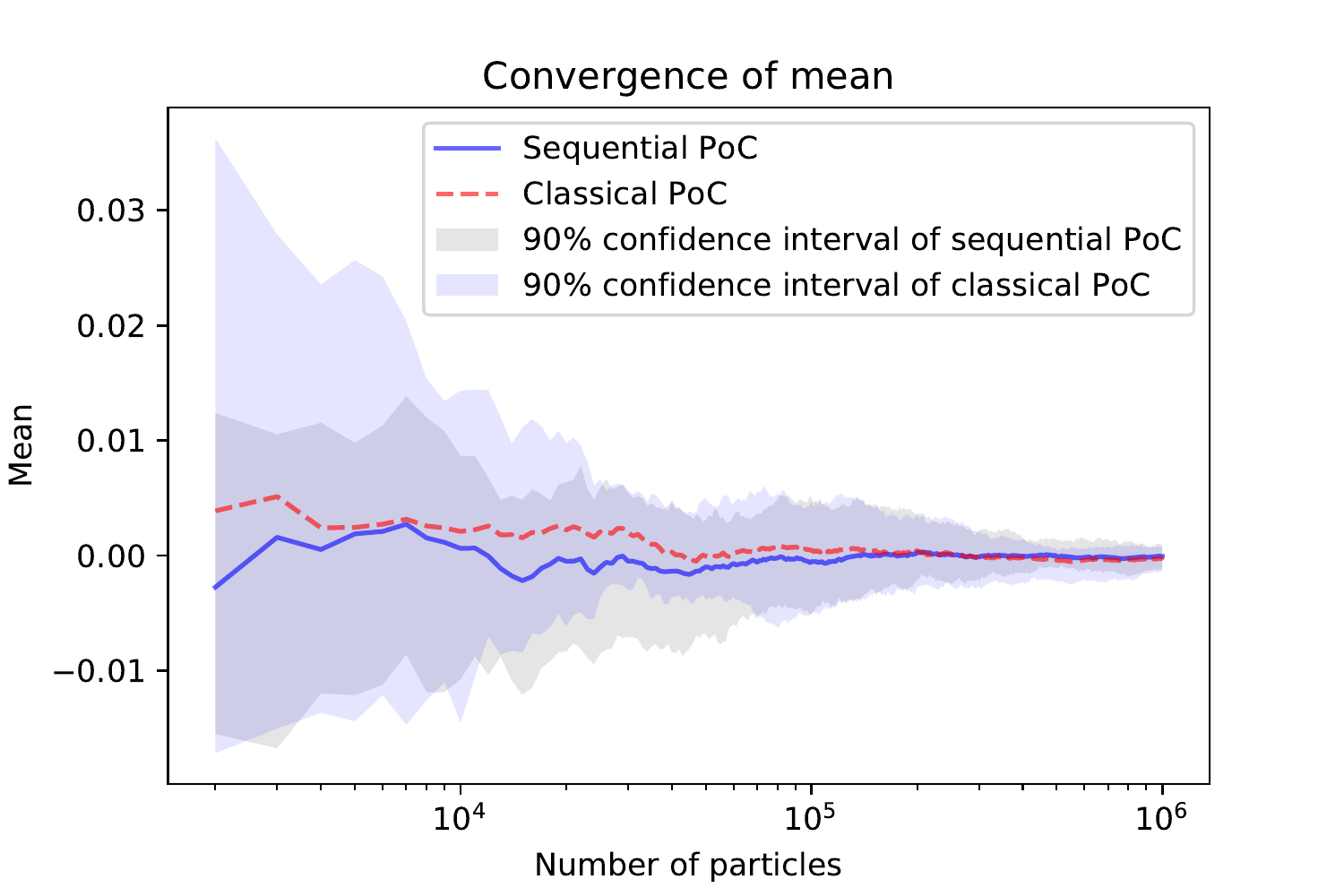}\includegraphics[width=0.5\textwidth]{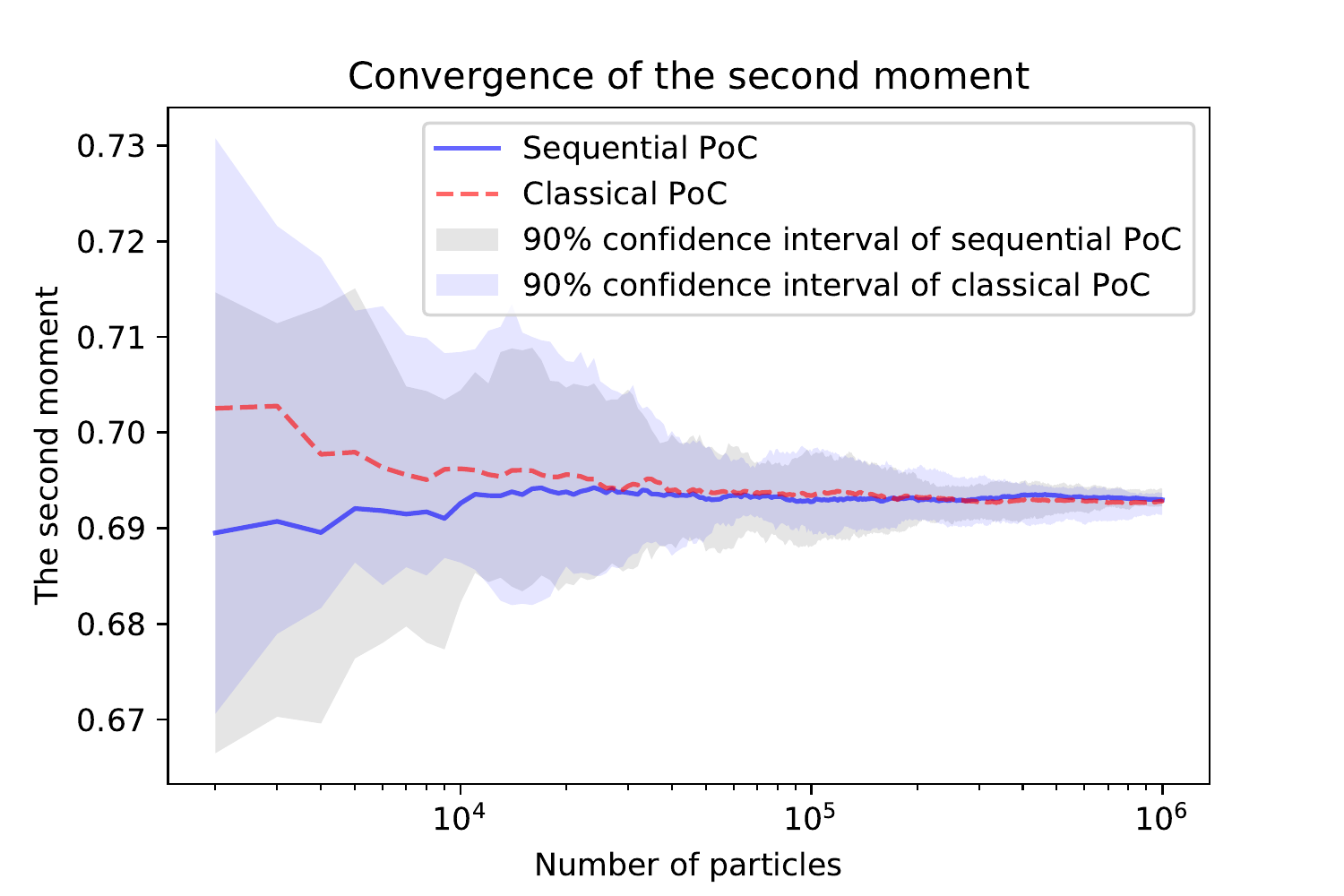}

\caption{\label{fig:3D-model} Multidimensional nonlinear model.}
\end{figure}

\subsubsection*{Example 3}
Recall the Curie--Weiss mean-field lattice model 
\[
\md X_{t}=[-\beta(X_{t}^{3}-X_{t})+\beta K\bE X_{t}]\md t+\sigma\md W_{t}.
\]
We take $\beta=1.0$, $K=0.5$, $T=1$ and $\sigma=1.0$.
Similar to the previous examples,  Figure \ref{fig:Curie-Weiss} shows the convergence of the mean and the second moment.
\begin{figure}[htbp]
\includegraphics[width=0.5\textwidth]{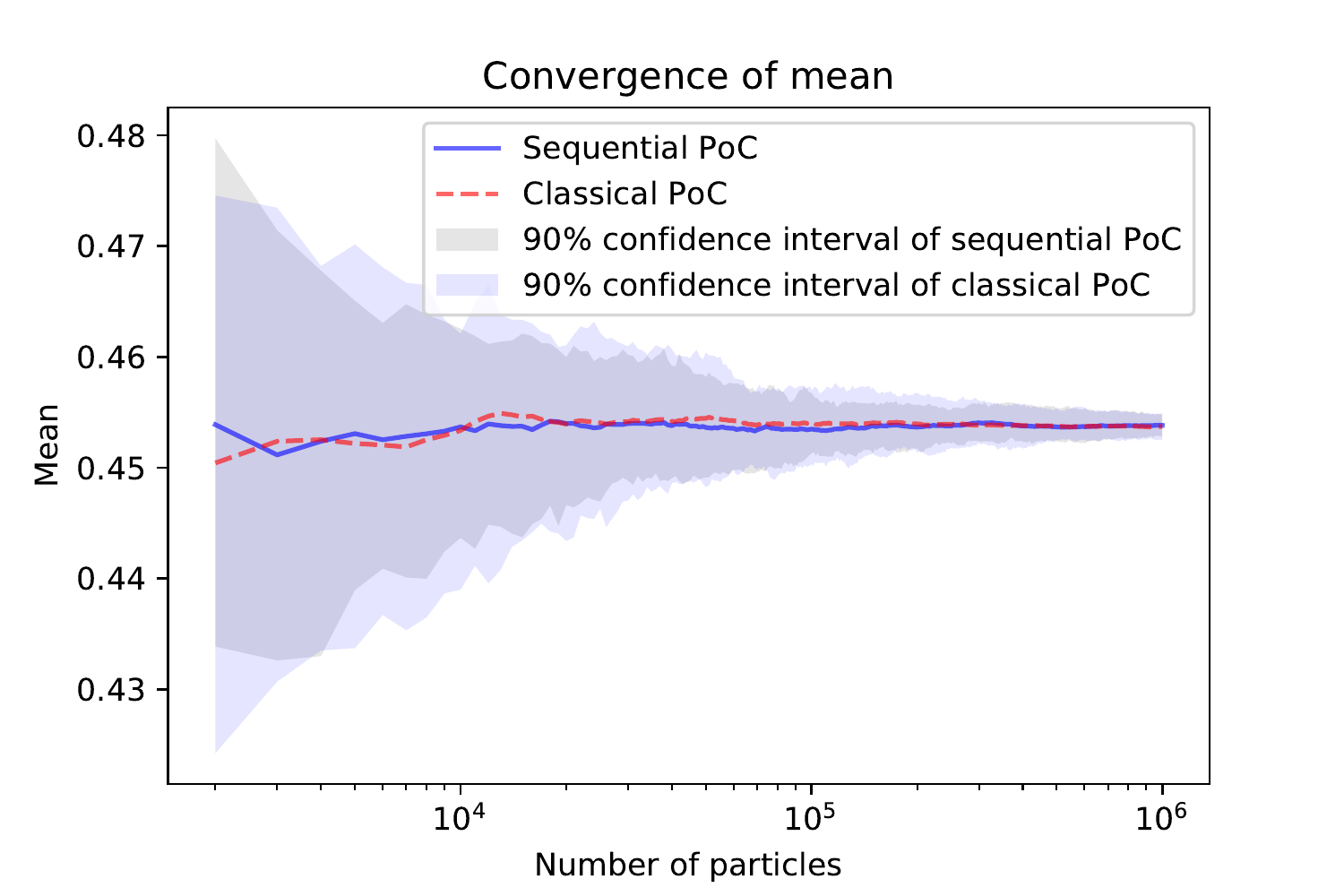}\includegraphics[width=0.5\textwidth]{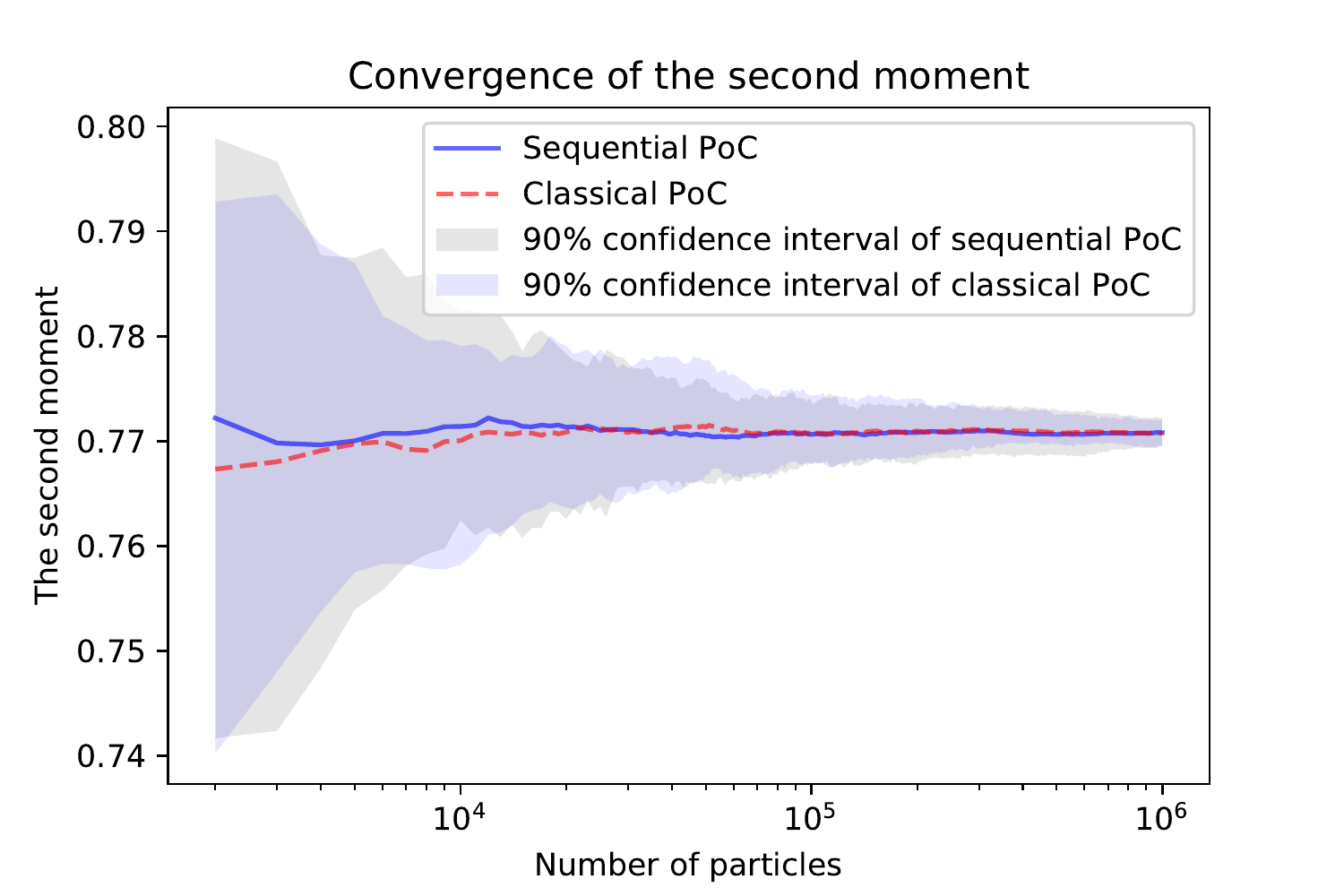}

\caption{\label{fig:Curie-Weiss}Curie--Weiss mean-field lattice model.}
\end{figure}
Furthermore, we plot the time evolution of the distribution in Figure \ref{fig:CW-pdf} to demonstrate the asymptotic behavior of the SPoC algorithm.
At $T=0.1,1,2,3,10$  we plot the approximate densities of the empirical measures with the initial condition \(X_{0}=1\) (left) and \(X_{0}=0\) (right) and compare them with the density of the invariant measure.
\begin{figure}[htbp]
\includegraphics[width=0.5\textwidth]{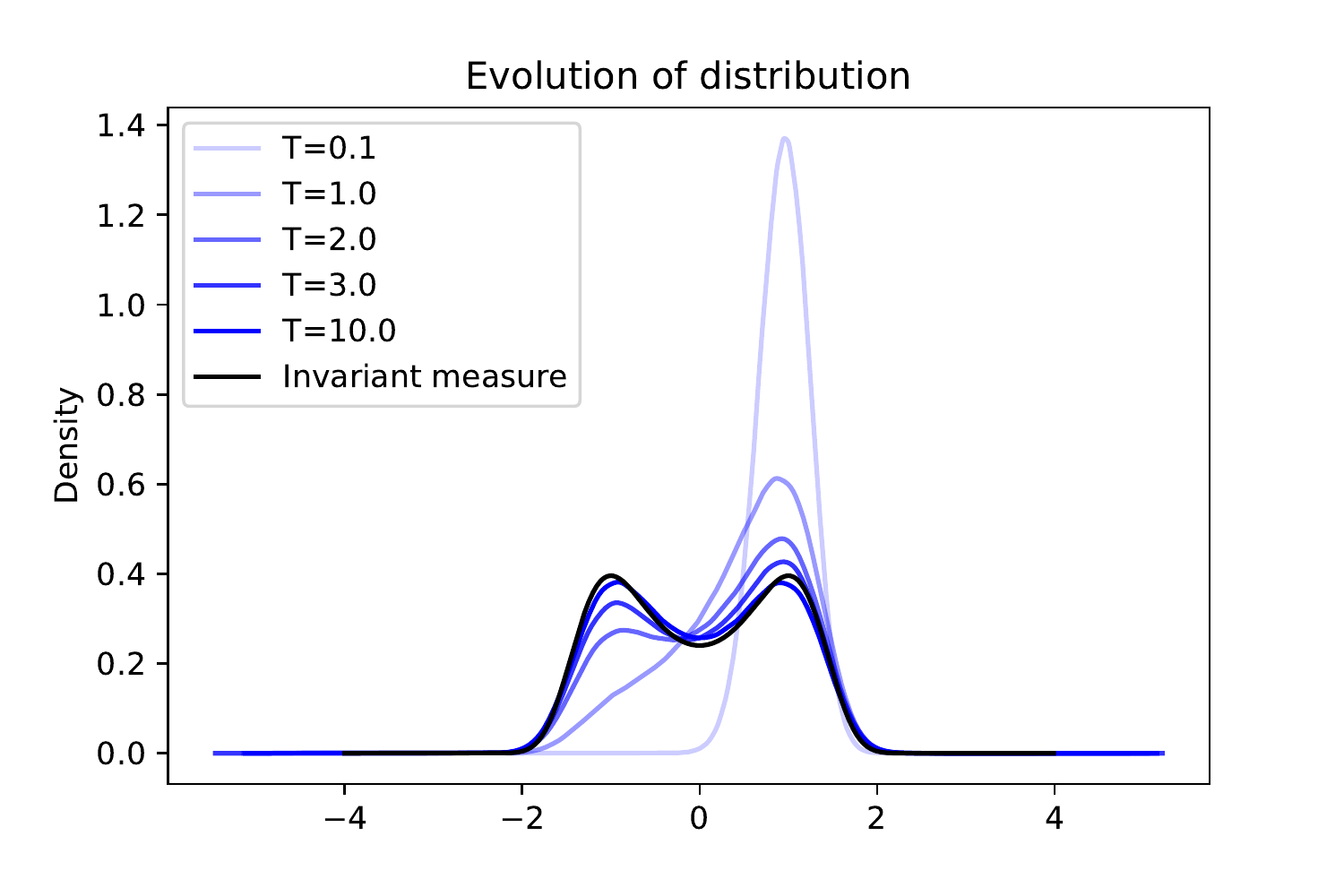}\includegraphics[width=0.5\textwidth]{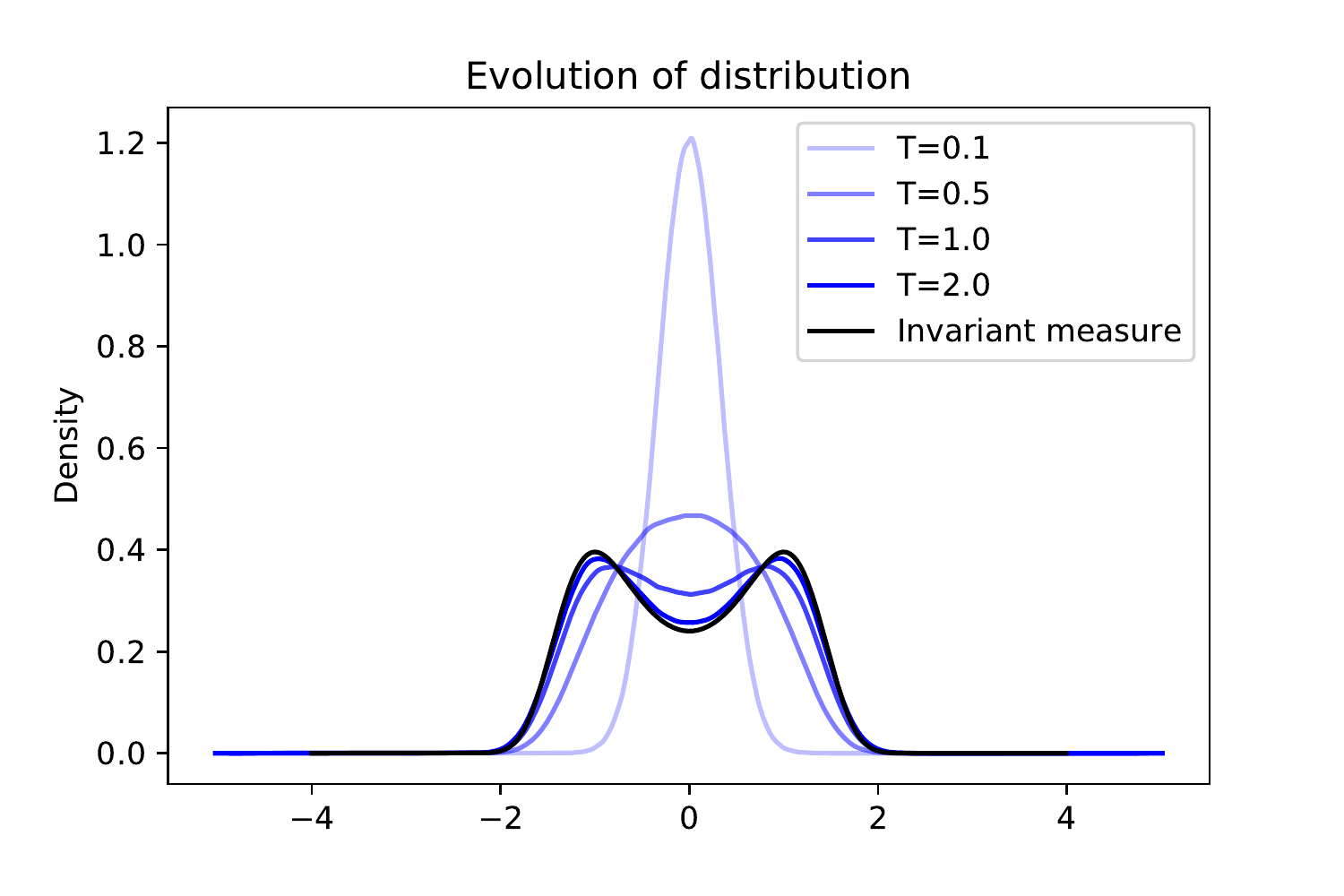}

\caption{\label{fig:CW-pdf}Approximate densities of empirical measures and the invariant measure.}
\end{figure}

\appendix

\section{Convergence of empirical measures on the path space}
\label{appendix}
In the appendix we give a rigorous discussion for Remark \ref{rem:functional}.
First, we prove an infinite-dimensional version of Lemma \ref{lemma:iid}.
Let \(E=C([0,1];\bR^{d})\) be the continuous path space equipped with the uniform norm, \(\mathscr{P}(E)\) the set of Borel probability measures.
For any \(\mu,\nu\in\mathscr{P}(E)\) their Wasserstein distance \(\cW_{2}\) is defined by 
\begin{equation*}
    \cW_{2}(\mu,\nu):=\inf\Bigl\{\Bigl(\E\int_{0}^{1}|X_{t}-Y_{t}|^{2}\md t\Bigr)^{1/2}:\cL(X)=\mu,\,\cL(Y)=\nu\Bigr\}.
\end{equation*}
Let \(\mu\in\mathscr{P}(C([0,1];\bR^{d}))\) be the law of the following It\^o diffusion
\begin{equation}
    \label{eqn-sde}
    X_{t}=X_{0}+\int_{0}^{t}b(s,X_{s})\md s +\int_{0}^{t}\sigma(s,X_{s} )\md W_{s}.
\end{equation}

\begin{assumption}
    \label{asmp-path}
    We assume \eqref{eqn-sde} has a unique strong solution \(X\) that satisfies 
    \begin{equation*}
        \E\sup_{|t-s|\leq k^{-1}}|X_{t}-X_{s}|^{2}\leq C k^{-\frac{1}{2}} \quad\text{and}\quad         \lim_{k\to \infty}  \E\sup_{|t-s|\leq k^{-1}}|X_{t}-X_{s}|^{dk+5}=0.
    \end{equation*}
\end{assumption}

\begin{lem}
    \label{lem-path}
    Suppose Assumption \ref{asmp-path} holds.
    Let \((X^{n})_{n\ge 1}\) be i.i.d. copies of  the solution $X$ of \eqref{eqn-sde}, i.e., $X^n$ satisfies
\begin{equation*}
X^{n}_{t}=X_{0}^{n}+\int_{0}^{t}b(s,X^{n}_{s})\md s +\int_{0}^{t}\sigma(s,X^{n}_{s} )\md W^{n}_{s},
\end{equation*}
where \(W^{n}\) are independent Brownian motions and \(X_{0}^{n}\) are i.i.d. random variables independent of \(\{W^{n}\}_{n\geq 1}\).
    By \(\mu^{n}\) we denote the empirical measure of \(X^{n}\), i.e.,
    \begin{equation}
        \mu^{n}:=\frac{1}{n}\sum_{i=1}^{n}\delta_{X^{i}},
        \end{equation}
    and by \(\mu\) we denote the law of \(X^{n}\).
    Then, we have 
    \begin{equation*}
        \E[\cW_{2}(\mu^{n},\mu)^{2}]\leq C\frac{\ln\ln n}{\ln n}. 
    \end{equation*}
\end{lem}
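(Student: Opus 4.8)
The plan is to reduce this infinite-dimensional estimate to a finite-dimensional one by time-discretisation and then invoke Lemma~\ref{lemma:iid}. Fix an integer $k\ge 1$, to be optimised later as a function of $n$, and let $\Pi_{k}\colon E\to E$ be piecewise-linear interpolation at the nodes $0,\tfrac1k,\dots,1$. Since $\Pi_{k}$ changes a path only inside each subinterval, $\sup_{t}|X_{t}-(\Pi_{k}X)_{t}|\le\sup_{|t-s|\le1/k}|X_{t}-X_{s}|$, so the first part of Assumption~\ref{asmp-path} gives $\E\int_{0}^{1}|X_{t}-(\Pi_{k}X)_{t}|^{2}\,\md t\le Ck^{-1/2}$; using the diagonal coupling in the definition of $\cW_{2}$ (which here is the $L^{2}$-in-time Wasserstein distance), this bounds both $\E[\cW_{2}(\mu^{n},\Pi_{k}\#\mu^{n})^{2}]$ and $\cW_{2}(\Pi_{k}\#\mu,\mu)^{2}$ by the interpolation error. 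By the triangle inequality it then remains to control $\E[\cW_{2}(\Pi_{k}\#\mu^{n},\Pi_{k}\#\mu)^{2}]$.

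To this end I would identify a piecewise-linear path with the vector $v=(X_{0},X_{1/k},\dots,X_{1})\in\R^{D}$ of its nodal values, $D=d(k+1)$. A one-line computation on each subinterval gives $\tfrac{1}{6k}|v-w|^{2}\le\int_{0}^{1}|X_{t}-X'_{t}|^{2}\,\md t\le\tfrac1k|v-w|^{2}$, i.e.\ this identification is bi-Lipschitz with constants comparable to $k^{-1/2}$ that are \emph{independent of $k$}. Hence, writing $\nu^{n}$ for the \emph{unweighted} empirical measure in $\R^{D}$ of the nodal vectors of $X^{1},\dots,X^{n}$ and $\nu$ for its common law, one gets $\E[\cW_{2}(\Pi_{k}\#\mu^{n},\Pi_{k}\#\mu)^{2}]\le\tfrac{C}{k}\,\E\big[\cW_{2}(\nu^{n},\nu)^{2}\big]$, where $\cW_{2}$ on the right is the Euclidean Wasserstein distance on $\R^{D}$. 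Now Lemma~\ref{lemma:iid} applies with $w_{i}\equiv1$, so $\theta_{n}=1/n$: for any $p\ge 2$, with $r=2$, it gives $\E[\cW_{2}(\nu^{n},\nu)^{2p}]\le Cn^{-\gamma_{2,p,D}}$ provided the nodal vector has a finite moment of order $q>2p+(p-1)D$, and Jensen's inequality yields $\E[\cW_{2}(\nu^{n},\nu)^{2}]\le Cn^{-\gamma_{2,p,D}/p}$ with $\gamma_{2,p,D}/p=1/(2+(1-1/p)D)$. The high moment of the nodal vector that this requires is exactly what the second part of Assumption~\ref{asmp-path} supplies: the exponent $dk+5$ there grows like $D$, and asking that this moment \emph{tend to $0$} as $k\to\infty$ is what prevents the dimension-dependent constant produced by Lemma~\ref{lemma:iid} from blowing up as $k=k(n)\to\infty$.

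Finally I would balance. Collecting everything, $\E[\cW_{2}(\mu^{n},\mu)^{2}]\lesssim (\text{interpolation error in }k)+C\,k^{a}\,n^{-c/(dk)}$ for some fixed $a,c>0$; the statistical term is negligible precisely when $k\ll\ln n$, and equating the two contributions forces $k\ln k\asymp\ln n$, i.e.\ $k=k(n)$ of order $\ln n/\ln\ln n$, which substituted back gives the stated logarithmic rate. Piecewise-constant interpolation, or a mild regrouping of the argument, would serve equally well.

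I expect the main obstacle to be the interaction, in the finite-dimensional step, between the growing dimension and the growing moment requirement: Lemma~\ref{lemma:iid} in dimension $D\approx dk$ carries a constant depending on $D$ and on a moment of order $\gtrsim D$ of the data, and both grow with $k$. The crux is to check, via Assumption~\ref{asmp-path}, that this constant stays under control --- polynomial growth in $k$ is enough --- so that letting $k=k(n)\to\infty$ at the rate $\ln n/\ln\ln n$ is legitimate. The $k$-independence of the bi-Lipschitz constants in the identification of piecewise-linear paths with $\R^{D}$ is a small but necessary technical point, since any $k$-dependent distortion there would reintroduce a dimensional loss that the $\tfrac1k$ prefactor cannot absorb.
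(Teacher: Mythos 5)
Your overall plan (discretise time, identify piecewise-linear interpolants with nodal vectors in $\R^{D}$, estimate the finite-dimensional Wasserstein distance, balance $k$ against $n$) is the same as the paper's, and the final balance $k\sim\ln n/\ln\ln n$ is right. Where you diverge is the finite-dimensional step, and this is where there is a genuine gap.

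You propose to invoke Lemma~\ref{lemma:iid} directly in dimension $D\approx dk$, which produces a statistical term of order $n^{-\Theta(1/(dk))}$ and a constant $C_{D,p,q}$ that must be controlled uniformly in $k$. The paper does something structurally different: it mollifies $\bar\mu_{k}^{n}$ with the law $\Phi_{\sigma}$ of a scaled Brownian path at the grid times, so that the mollified density factorises over the increment variables $y_{l}=x_{l}-x_{l-1}$; a variance-of-products estimate then gives an $L^{2}$ bound of size $k\,n^{-1}\prod_{l}\E|\phi_{\sigma/\sqrt{k}}(y_{l}-Y_{l})|^{2}$, hence a \emph{dimension-free} $n^{-1/2}$ rate, at the price of a huge but controllable prefactor of size $(Ck)^{Ck}$. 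Because the $n$-rate is $n^{-1/2}$, this super-exponential-in-$k$ constant is tolerable: balancing $(Ck)^{Ck}n^{-1/2}\sim k^{-1}$ still gives $k\sim\ln n/\ln\ln n$. Your $n$-rate, by contrast, is $n^{-\Theta(1/(dk))}$, so the same super-exponential constant would shift the balance to $k\sim\sqrt{\ln n/\ln\ln n}$ and spoil the stated rate.

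Two concrete issues make this more than a cosmetic difference. First, the claim that "polynomial growth in $k$" of the Lemma~\ref{lemma:iid} constant is what you need, and that it holds, is unsubstantiated. In fact the constant in Lemma~\ref{lemma:iid} involves $\E(1+|\xi_{1}|)^{q}$ with $q>rp+(p-1)D\gtrsim dk$, applied to the nodal vector $\xi_{1}=(X_{0},X_{1/k},\dots,X_{1})\in\R^{D}$. For a diffusion with Gaussian-type tails this moment behaves like $k^{\Theta(k)}$, which is \emph{not} polynomial; other factors in the proof of Lemma~\ref{lemma:iid} (the $L^{p^{*}}$-norm in dimension $D$, the mollification factor $\sigma^{d(1-p)}$) also grow super-polynomially in $D$. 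Second, and relatedly, the second part of Assumption~\ref{asmp-path} is not "exactly what Lemma~\ref{lemma:iid} supplies": that assumption concerns the $(dk+5)$-th moment of the \emph{increment} $\sup_{|t-s|\le1/k}|X_{t}-X_{s}|$, which is a local, shrinking quantity, whereas your application of Lemma~\ref{lemma:iid} needs a high moment of the full nodal vector, which does not shrink with $k$. The paper's choice of a Brownian mollification kernel is precisely what converts the problem into one about increments, so that the shrinking increment moment in Assumption~\ref{asmp-path} can be used; with a standard isotropic kernel and Lemma~\ref{lemma:iid} as a black box, that hypothesis does not feed in.

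So the decomposition and final balance are sound, but the plan to use Lemma~\ref{lemma:iid} off-the-shelf in growing dimension does not close: it both requires moment control that Assumption~\ref{asmp-path} does not provide and, absent polynomial constants, leads to a weaker final rate than the one claimed.
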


\begin{proof}
    The proof can be split into two steps: we first discretize the continuous path space and estimate the distance between measures on \(C([0,1];\bR^{d})\) through their finite marginals; then we apply the classical mollification argument to the finite marginals and obtain a uniform estimate.
    
    For the first step, we define a time partition \(\{t_{0},\dots,t_{k}\}\) of \([0,1]\) where \(t_{l}=l/k\) for \(0\leq l\leq k\).
    We  define  a map \(T_{k}:C([0,1];\bR^{d})\to C([0,1];\bR^{d})\)  for any \(t_{l}\leq t\leq t_{l+1}\) as
\begin{equation*}
    T_{k}(\omega)_{t}:=(t_{l+1}-t)k\omega_{t_{l}}+(t-t_{l})k\omega_{t_{l+1}}.
\end{equation*}
Notice \(T_{k}(\omega)\) is a piecewise linear path and has the same value as \(\omega\) on \(\{t_{0},\dots,t_{k}\}\).
We define pushforward measures
\begin{equation*}
    \mu_{k}^{n}=(T_{k})_{\#}\mu^{n}\quad \text{  and  } \quad    \mu_{k}=(T_{k})_{\#}\mu.
\end{equation*}
Since \(X^{n}\) are i.i.d., we have 
\begin{align*}
    \E[\cW_{2}(\mu^{n},\mu_{k}^{n})^{2}]&\leq \E \Bigl[\frac{1}{n}\sum_{i=1}^{n}\int_{0}^{1}|X^{i}_{t}-T_{k}(X^{i})_{t}|^{2}\md t\Bigr]\\
    &=\E\Bigl[\int_{0}^{1}|X_{t}-T_{k}(X)_{t}|^{2}\md t\Bigr]\\
    &\leq \E\bigl[\sup\{|X(t)-X(s)|^{2}:|t-s|\leq k^{-1}\}\bigr]\\
    &\leq Ck^{-1}.
\end{align*}
Similarly, we have \(\cW_{2}(\mu,\mu_{k})^{2}\leq Ck^{-1}\).
To estimate \(\E[\cW_{2}(\mu^{n}_{k},\mu_{k})^{2}]\), let \(\xi\) and \(\zeta\) be two random processes with law \(\mu_{k}^{n}\) and \(\mu_{k}\) respectively.
Since \(\mu^{n}_{k}\) and \(\mu_{k}\) concentrate on the piecewise linear paths, we have 
\begin{equation*}
    \E\Bigl[\int_{0}^{1}|\xi_{t}-\zeta_{t}|^{2}\md t\Bigr]\leq Ck^{-1}\E\Bigl[\sum_{l=1}^{k}|\xi(t_{l})-\zeta(t_{l})|^{2}\Bigr],
\end{equation*}
which implies 
\begin{equation*}
    \E[\cW_{2}(\mu^{n}_{k},\mu_{k})^{2}]\leq Ck^{-1}\E[\cW_{2}(\bar{\mu}^{n}_{k},\bar{\mu}_{k})^{2}],
\end{equation*}
where \(\bar{\mu}^{n}_{k},\bar{\mu}_{k}\in\sP(\bR^{d\times k})\) are the marginals of  \(\mu^{n}_{k},\mu_{k}\) on \(\{t_{1},\dots,t_{k} \}\) respectively.
To sum up, we derive 
\begin{equation}
    \label{eqn-path1}
    \begin{aligned}
    &\quad\E[\cW_{2}(\mu_{n},\mu)^{2}]\\
    &\leq \E[\cW_{2}(\mu^{n},\mu^{n}_{k})^{2}]+\E[\cW_{2}(\mu^{n}_{k},\mu_{k})^{2}]+\E[\cW_{2}(\mu_{k},\mu)^{2}]\\
    &\leq Ck^{-1}+Ck^{-1}\E[\cW_{2}(\bar{\mu}^{n}_{k},\bar{\mu}_{k})^{2}].
    \end{aligned}
\end{equation}

We proceed to the mollification step.
Let \(\Phi_{\sigma}\) be the law of \((\sigma B_{t_{1}},\dots, \sigma B_{t_{k}})\), where \(B\) is a \(d\)-dimensional Brownian motion.
Then \(\Phi_{\sigma}\) has a density function 
\begin{equation*}
    \phi_{\sigma/\sqrt{k}}(x_{1})\phi_{\sigma/\sqrt{k}}(x_{2}-x_{1})\cdots\phi_{\sigma/\sqrt{k}}(x_{k}-x_{k-1}),
\end{equation*}
where \(\phi_{\sigma/\sqrt{k}}\) is the density function of \(\mathcal{N}(0,\sigma^{2}k^{-1}I_{d})\).
The mollified measures are given by
\begin{equation*}
   \mu_{k,\sigma}^{n}=\bar{\mu}_{k}^{n}*\Phi_{\sigma} \quad \text{and }\quad \mu_{k,\sigma}=\bar{\mu}_{k}*\Phi_{\sigma}
\end{equation*}
with density functions
\begin{equation*}
    g(x_{1},\dots,x_{k})=\cK_{n}(\phi_{\sigma/\sqrt{k}}(x_{1}-X_{t_{1}}^{i}))\cdots \cK_{n}(\phi_{\sigma/\sqrt{k}}(x_{k}-x_{k-1}- X^{i}_{k}+X^{i}_{k-1})),
\end{equation*}
and
\(
    f(x_{1},\dots,x_{k})=\E [g(x_{1},\dots,x_{k})].
\)
It is direct to show that
\begin{equation}
    \label{eqn-path2}
    \E [\cW_{2}(\mu^{n}_{k,\sigma},\mu^{n}_{k})^{2}]\leq C\sigma^{2}k\quad\text{and}\quad \E [\cW_{2}(\mu_{k,\sigma},\mu_{k})^{2}]\leq C\sigma^{2}k.
\end{equation}
By Lemma \ref{lem:coupling}, we get
\begin{equation}\label{eqn-path}
\begin{aligned}
    &\quad\cW_{2}(\mu_{k,\sigma},\mu_{k,\sigma}^{n})^{2}\\
    &\leq  Ck\int_{\bR^{d\times k}}\sup_{1\leq l\leq k}|x_{l}|^{2}|f(x_{1},\dots,x_{k})-g(x_{1},\dots,x_{k})|\md x_{1}\cdots \md x_{k}\\
    &\leq Ck \sqrt{\int_{\bR^{d\times k}} (1+\sup_{1\leq l\leq k}|x_{l}|^{dk+1})^{-1}\md x} \sqrt{\int_{\bR^{d\times k}} (1+\sup_{1\leq l\leq k} |x_{l}|^{dk+5})|f-g|^{2}\md x}\\
    &\leq Ck A_{k}^{1/2}\sqrt{\int_{\bR^{d\times k}} (1+\sup_{1\leq l\leq k} |x_{l}|^{dk+5})|f(x)-g(x)|^{2}\md x},
\end{aligned}
\end{equation}
where \(A_{k}\) is the surface area of \(\{x=(x_{1},\dots,x_{k})\in \bR^{d\times k}:\sup_{l}|x_{l}|=1\}\) bounded by
\begin{equation*}
    A_{k}=\int_{\sup_{1\leq l\leq k}|x_{l}|=1}\md \sigma\leq C^{k}.
\end{equation*}
Notice for independent random variables \((\xi_{l})_{1\leq l\leq k}\), we have 
\begin{equation*}
    \mathrm{Var}(\xi_{1}\cdots \xi_{k})\leq \sum_{l=1}^{k}\Bigl[\mathrm{Var}(\xi_{l})\prod_{m\neq l}\E \xi_{m}^{2}\Bigr].
\end{equation*}
For the simplicity, we write 
\begin{equation*}
    x_{1}=y_{1},\quad x_{2}=y_{1}+y_{2},\quad\dots,
    \quad x_{k}=y_{1}+\cdots+ y_{k},
\end{equation*}
and 
\begin{equation*}
     X_{1}^{i}=Y_{1}^{i},\quad X_{2}^{i}=Y_{1}^{i}+Y_{2}^{i},\quad\dots,
     \quad X_{k}^{i}=Y_{1}^{i}+\cdots+ Y_{k}^{i}.
\end{equation*}
Therefore, we derive 
\begin{align*}
    &\quad \E|f(x)-g(x)|^{2}=\mathrm{Var}(g)\\
    &\leq \sum_{l=1}^{k}\Bigl[\mathrm{Var}(\cK_{n}(\phi_{\sigma/\sqrt{k}}(y_{l}-Y_{l}^{i})))\prod_{m\neq l}\E\big|\cK_{n}(\phi_{\sigma/\sqrt{k}}(y_{m}-Y_{m}^{i}))\big|^{2}\Bigr]\\
    &\leq kn^{-1}\prod_{l=1}^{k}\E\big|\cK_{n}(\phi_{\sigma/\sqrt{k}}(y_{l}-Y_{l}^{i}))\big|^{2}\\
    &\leq kn^{-1}\prod_{l=1}^{k}\E|\phi_{\sigma/\sqrt{k}}(y_{l}-Y_{l})|^{2}
\end{align*}
Plug the above estimate into \eqref{eqn-path}, we obtain
\begin{align*}
    \E[\cW_{2}(\mu_{k,\sigma},\mu_{k,\sigma}^{n})^{2}] &\leq C^{k}\sqrt{\int_{\bR^{d\times k}}(1+\sup_{1\leq l\leq k}|y_{l}|^{dk+5})\E |f(x)-g(x)|^{2} \md y}\\
    &\leq C^{k}\sqrt{\int_{\bR^{d\times k}}(1+\sup_{1\leq l\leq k}|y_{l}|^{dk+5})kn^{-1}\prod_{l=1}^{k}\E|\phi_{\sigma/\sqrt{k}}(y_{l}-Y_{l})|^{2}\md y}\\
    &\leq C^{k}n^{-1/2}\sqrt{\int_{\bR^{d\times k}}(1+\sup_{1\leq l\leq k}|y_{l}|^{dk+5})\prod_{l=1}^{k}\E|\phi_{\sigma/\sqrt{k}}(y_{l}-Y_{l})|^{2}\md y}.
\end{align*}
Then we calculate the integral inside the square root as follows,
\begin{align*}
    &\quad\int_{\bR^{d\times k}}(1+\sup_{1\leq l\leq k}|y_{l}|^{dk+5})\prod_{l=1}^{k}\E|\phi_{\sigma/\sqrt{k}}(y_{l}-Y_{l})|^{2}\md y\\
    &=\E\int_{\bR^{d\times k}}(1+\sup_{1\leq l\leq k}|y_{l}+Y_{l}|^{dk+5})\prod_{l=1}^{k}|\phi_{\sigma/\sqrt{k}}(y_{l})|^{2}\md y\\
    &\leq 2^{dk+4}\E \int_{\bR^{d\times k}}\bigl[1+\sup_{1\leq l\leq k}(|y_{l}|^{dk+5}+|Y_{l}|^{dk+5})\bigr]\prod_{l=1}^{k}|\phi_{\sigma/\sqrt{k}}(y_{l})|^{2}\md y\\
    &=C \pi^{-\frac{dk}{2}}\sigma^{-dk}k^{\frac{dk}{2}}\E \int_{\bR^{d\times k}}\bigl[1+\sup_{1\leq l\leq k}(|y_{l}|^{dk+5}+|Y_{l}|^{dk+5})\bigr]\prod_{l=1}^{k}\phi_{\sigma/\sqrt{2k}}(y_{l})\md y\\
    &\leq C \pi^{-\frac{dk}{2}}\sigma^{-dk}k^{\frac{dk}{2}} \Bigl[1+\E\sup_{1\leq l\leq k}|Y_{l}|^{dk+5}+ \int_{\bR^{d\times k}}\sup_{1\leq l\leq k}|y_{l}|^{dk+5}\prod_{l=1}^{k}\phi_{\sigma/\sqrt{2k}}(y_{l})\md y\Bigr].
\end{align*}
We take \(\sigma^{2}=k^{-1}\), and by Assumption \ref{asmp-path} we have 
\begin{equation*}
   \lim_{k\to\infty}\Bigl[ \E\sup_{1\leq l\leq k}|Y_{l}|^{dk+5}+ \int_{\bR^{d\times k}}\sup_{1\leq l\leq k}|y_{l}|^{dk+5}\prod_{l=1}^{k}\phi_{\sigma/\sqrt{2k}}(y_{l})\md y\Bigr]=0.
\end{equation*}
This implies 
\begin{equation*}
    \E[\cW_{2}(\mu_{k,\sigma},\mu_{k,\sigma}^{n})^{2}]\leq (Ck)^{Ck}n^{-\frac{1}{2}}.
\end{equation*}
Combining this with \eqref{eqn-path1} and \eqref{eqn-path2}, we derive 
\begin{equation*}
    \E [\cW_{2}(\mu,\mu^{n})^{2}]\leq C(k^{-1}+(Ck)^{Ck}n^{-1/2}).
\end{equation*}
By taking \(k\sim \frac{\ln n}{\ln\ln n}\), we finally derive
\begin{equation*}
        \E [\cW_{2}(\mu,\mu^{n})^{2}]\leq C \frac{\ln\ln n}{\ln n}. 
\end{equation*}
This concludes the proof.
\end{proof}

\begin{cor}\label{cor-path}
    We take \(\alpha_{n}=n^{-1}\).
Let $(X^{n},\mu^{n})$  be the solutions of \eqref{eq:ADPS} and \(\mu_{t}=\cL(X_t)\) given by~\eqref{eq:McKean--Vlasov} with the initial distribution~\(\mu_{0}\).
Under Assumptions \ref{assu:lipschitz} and \ref{asmp-path}, we have 
  \begin{equation*}
    \E[\cW_{2}(\mu^{n},\mu)^{2}]\leq C \frac{\ln\ln n}{\ln n}.
  \end{equation*}
\end{cor}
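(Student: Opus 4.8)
The plan is to sandwich $\mu^{n}$ between $\mu$ and the empirical path-measure of the i.i.d.\ McKean--Vlasov copies introduced for the synchronous coupling, controlling the first leg by the finite-horizon SPoC estimate and the second leg by Lemma~\ref{lem-path}. Recall the i.i.d.\ processes $Y^{n}$ from \eqref{eq:coupling-1}, driven by the same Brownian motions $W^{n}$ and the same initial data $X_{0}^{n}$ as the $X^{n}$ in \eqref{eq:ADPS}; each $Y^{n}$ has law $\mu=\cL(X)$ on $C([0,T];\R^{d})$. Put $\bar{\mu}^{n}:=\frac1n\sum_{i=1}^{n}\delta_{Y^{i}}$. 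Since $\alpha_{n}=1/n$, the path-measure $\mu^{n}$ is the unweighted empirical measure $\frac1n\sum_{i=1}^{n}\delta_{X^{i}}$, and the triangle inequality for $\cW_{2}$ gives
\[
\E[\cW_{2}(\mu^{n},\mu)^{2}]\le 2\,\E[\cW_{2}(\mu^{n},\bar{\mu}^{n})^{2}]+2\,\E[\cW_{2}(\bar{\mu}^{n},\mu)^{2}].
\]

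For the first term, pairing $X^{i}$ with $Y^{i}$ is an admissible coupling of $\mu^{n}$ and $\bar{\mu}^{n}$, so
\[
\E[\cW_{2}(\mu^{n},\bar{\mu}^{n})^{2}]\le\frac1n\sum_{i=1}^{n}\E\int_{0}^{T}|X_{t}^{i}-Y_{t}^{i}|^{2}\,\md t .
\]
This is precisely the quantity estimated in the proof of Theorem~\ref{thm:APoC-W}: with $p=1$ one has $\gamma=1/2$, and $\alpha_{n}=1/n$ gives $\alpha_{\infty}=0$ and $\overline{\alpha}=\underline{\alpha}=1<2=\gamma^{-1}\wedge(2-\alpha_{\infty})$, so case~(1) applies and the right-hand side is at most $C\me^{CT}\alpha_{n}^{1/2}=C\me^{CT}n^{-1/2}$. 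Only an $o(\ln\ln n/\ln n)$ rate is needed here, so any polynomial SPoC rate would suffice.

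For the second term, once the deterministic flow $(\mu_{t})_{t\le T}$ is fixed, the McKean--Vlasov equation \eqref{eq:McKean--Vlasov} becomes an It\^o diffusion of the form \eqref{eqn-sde} with coefficients $(t,x)\mapsto b(t,x,\mu_{t})$ and $(t,x)\mapsto\sigma(t,x,\mu_{t})$, and $Y^{1},Y^{2},\dots$ are i.i.d.\ copies of it. Assumption~\ref{asmp-path} holds by hypothesis, so Lemma~\ref{lem-path} --- stated on $[0,1]$, but valid on $[0,T]$ after the obvious time rescaling, which only changes constants --- yields $\E[\cW_{2}(\bar{\mu}^{n},\mu)^{2}]\le C\frac{\ln\ln n}{\ln n}$. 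Adding the two bounds and absorbing the $n^{-1/2}$ term into $C\frac{\ln\ln n}{\ln n}$ gives the claim.

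The substantive content of this circle of results is entirely in Lemma~\ref{lem-path}, which is already established; the only points needing care here are that with $\alpha_{n}=1/n$ the measure $\mu^{n}$ is genuinely the unweighted empirical measure of the \emph{paths} $X^{i}$ (so that the coupling bound is legitimate) and that the synchronous-coupling estimate of Theorem~\ref{thm:APoC-W} applied with $p=1$ is available, which presupposes $\mu_{0}$ carrying a moment strictly above $2$. Neither presents a genuine obstacle.
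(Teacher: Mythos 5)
Your proposal is correct and follows essentially the same route as the paper: decompose via the synchronous-coupling empirical measure $\bar\mu^{n}=\frac1n\sum_i\delta_{Y^i}$ (the paper's $\nu^n$), bound the first leg by the finite-time SPoC estimate from the proof of Theorem~\ref{thm:APoC-W}, and the second leg by Lemma~\ref{lem-path}. The only cosmetic difference is that you quote the tighter rate $n^{-1/2}$ (via $p=1$, $\gamma=1/2$) for the first leg while the paper writes $n^{-1/(d+2)}$; both are polynomial and hence absorbed by the $\ln\ln n/\ln n$ contribution from Lemma~\ref{lem-path}, so the conclusion is unaffected.
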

\begin{proof}
    Let \(Y^{n}\) be the associated synchronous coupling.
    From the proof of Theorem \ref{thm:APoC-W}, we know 
    \begin{equation*}
         \frac{1}{n}\sum_{i=1}^{n}\E\int_{0}^{T}|X^{i}_{t}-Y^{i}_{t}|^{2}\md t\leq C n^{-\frac{1}{d+2}}.
    \end{equation*}
    Let \(\nu^{n}\) denote the empirical measure of \(Y^{n}\), i.e.,
    \begin{equation*}
        \nu^{n}=\frac{1}{n}\sum_{i=1}^{n}\delta_{Y^{i}}.
    \end{equation*}
    By Lemma \ref{lem-path}, we have 
    \begin{equation*}
        \E[\cW_{2}(\nu^{n},\mu)^{2}]\leq C \frac{\ln \ln n}{\ln n}.
    \end{equation*}
    Combing above two estimates, we derive 
    \begin{equation*}
        \E[\cW_{2}(\mu^{n},\mu)^{2}]\leq C\E[\cW_{2}(\mu^{n},\nu^{n})^{2}]+C\E[\cW_{2}(\nu^{n},\mu)^{2}]\leq C  \frac{\ln \ln n}{\ln n}.
    \end{equation*}
    The proof is complete.
\end{proof}

\bigskip
\section*{Acknowledgements}
    K.~Du was partially supported by National Natural Science
    Foundation of China (12222103), and by National Key R{\&}D Program of China (2018YFA0703900).
    Y.~Jiang was supported by the EPSRC Centre for Doctoral Training in Mathematics of Random Systems: Analysis, Modelling, and Simulation (EP/S023925/1).

\bibliographystyle{plainnat}
\bibliography{ref}

\end{document}